\renewcommand{\baselinestretch}{1.1} 
\definecolor{darkred}{rgb}{0.2,0.25,0.75}
\tikzset{->-/.style={decoration={
  markings,
  mark=at position #1 with {\arrow{>}}},postaction={decorate}}}
\def\impact#1{\bgroup\narrower
\baselineskip
\footskip\bigbreak
\hrule\vspace{-0.1 in}\medskip\nobreak\noindent \begin{BI}
\renewcommand{\baselinestretch}{1.1}  {\it #1\/}\par\nobreak\end{BI}}
\def\endimpact{\medskip\nobreak \hrule\bigbreak\egroup}
\def\CL{{\mathcal L}}
\setlist{itemsep = 0.20em, topsep = 0.20em}
\declaretheoremstyle[spaceabove=0.25cm,spacebelow=0.25cm,notefont=\normalfont\bfseries, notebraces={(}{)}]{Theorem}
\declaretheoremstyle[spaceabove=0.25cm,spacebelow=0.25cm,bodyfont=\normalfont,notefont=\normalfont\bfseries, notebraces={(}{)}]{noital}
\declaretheoremstyle[spaceabove=0.25cm,spacebelow=0.25cm,bodyfont=\normalfont\color{darkgreen},notefont=\normalfont\bfseries, notebraces={(}{)}]{green}
\declaretheoremstyle[spaceabove=0.25cm,spacebelow=0.25cm,bodyfont=\normalfont,notefont=\normalfont\bfseries,qed=$\qedsymbol$,notebraces={(}{)}]{proofstyle}
\declaretheorem[name=Definition,style=Definition]{Definition}
\declaretheorem[name=Proposition,style=Definition]{Proposition}
\declaretheorem[name=Theorem,style=Definition]{Theorem}
\declaretheorem[name=Corollary,style=Definition]{Corollary}
 \newtheorem{assumption}[Theorem]{Assumption}
\declaretheorem[name=Remark,style=Theorem]{Remark}
\declaretheorem[name=Lemma,style=Definition]{Lemma}
\newtheorem{BI}{Broader Impact}[]
\numberwithin{equation}{section}
\newcommand{\M}{\ensuremath{\mathcal M}}
\newcommand{\CO}{\ensuremath{\mathcal O}}
\newcommand{\GC}{\ensuremath{G_{\mathbb{C}}}}
\newcommand{\MGC}{\ensuremath{\mathcal{M}_{G_{\mathbb{C}}}}}
\newcommand{\MG}{\ensuremath{\mathcal{M}_{G}}}
\newcommand{\GL}{\ensuremath{{\rm GL}}}
\newcommand{\R}{\ensuremath{\mathbb R}}
\newcommand{\C}{\ensuremath{\mathbb C}}
\newcommand{\Z}{\ensuremath{\mathbb Z}}
\begin{document}

\bibliographystyle{utphys}

\setcounter{page}{1}

\title[D.~Baraglia ~$\&$ ~L.P. Schaposnik]
{Cayley and Langlands type correspondences\\ for orthogonal Higgs bundles}

\author[D.~Baraglia]{David Baraglia}

\author[L.P.~Schaposnik]{Laura P.~ Schaposnik}
\address{
David Baraglia:  The University of Adelaide, Adelaide SA 5005, Australia}
\email{david.baraglia@adelaide.edu.au}
\address{Laura P.~Schaposnik: University of Illinois at Chicago, 60607 Chicago, USA\\ and FU Berlin, 14195 Berlin, Germany }
\email{schapos@uic.edu}
\date{}

{\abstract{Through Cayley and Langlands type correspondences, we give a geometric description of the moduli spaces of real orthogonal and symplectic Higgs bundles of any signature in  the regular fibres of the Hitchin fibration. As applications of our methods, we complete the  concrete abelianization of   real slices corresponding to all quasi-split real forms,  and describe how extra components emerge naturally from the spectral data point of view.}
}
 
\maketitle
\vspace{-0.3 in}


\section{Introduction}
The moduli space of surface group representations in a reductive Lie group has long been studied, and through  
  non-abelian Hodge theory, Higgs bundles become a natural holomorphic tool through which to understand them. This paper is dedicated to the study of real orthogonal and symplectic Higgs bundles of any signature on a compact Riemann surface $\Sigma$ of genus $g\geq 2$, and through them, of surface group representations into $SO(p+q,p)$ and $Sp(2p+2q,2p)$. 
  Since most of our results have similar proofs in the symplectic and orthogonal setting, we will mainly focus on the moduli space $\mathcal{M}_{SO(p+q,p)}$ of $SO(p+q,p)$-Higgs bundles. The corresponding results for the symplectic counterparts $Sp(2p+2q,2p)$ follow with only minor modifications (see \autoref{sec:symplectic}). A short review of  Higgs bundles and the Hitchin fibration is given in \autoref{Higgs}. 
\smallbreak

\noindent{\bf Cayley and Langlands type correspondences (\autoref{section2}-\autoref{section4}).} 
We consider the restriction of the orthogonal Hitchin map to $h : \mathcal{M}_{SO(p+q,p)} \to \mathcal{A}_{SO(p+q,p)}$ on the moduli space $\mathcal{M}_{SO(p+q,p)}$ of $SO(p+q,p)$-Higgs bundles\footnote{We consider here $q>0$, since for $q=0$ one recovers the split real form $SO(p,p)$, for which the spectral data of the corresponding Higgs bundles is described through \cite[Theorem 4.12]{thesis}.}. Using our Cayley and Langlands type correspondences, we give a geometric description of the regular fibres $F(a) = h^{-1}(a)$ of the Hitchin map over generic points $a \in \mathcal{A}_{SO(p+q,p)}$ (see \autoref{section4}, and in particular \autoref{teorema1}, for details). More precisely, we identify the fibre $F(a)$ with a   covering of the product of two moduli spaces:
\begin{eqnarray}\label{cay}
\mathcal{M}_{Cay}(a) \times  \mathcal{M}_{Lan}(a).
\end{eqnarray}
The covering in question corresponds to certain {\it  extension data $\tau$} as explained below. The Cayley and Langlands moduli spaces $\mathcal{M}_{Cay}(a)$ and $\mathcal{M}_{Lan}(a)$, and the extension $\tau$ are given as follows:
\begin{itemize}
\vspace{0.1 in}

\item $\mathcal{M}_{Cay}(a)$ is a fibre of the Hitchin map for the moduli space of $K^2$-twisted $GL(p,\mathbb{R})$-Higgs bundles on $\Sigma$, which can be identified with the moduli space of line bundles $L$ of order two in the Jacobian of an associated spectral curve. This $K^2$-twisted $GL(p,\mathbb{R})$-Higgs bundle is related to a maximal $Sp(2p,\mathbb{R})$-Higgs bundle through the Cayley correspondence. The construction of such $K^2$-twisted $GL(p,\mathbb{R})$-Higgs bundle from $SO(p+q,p)$-Higgs bundles is done in \autoref{section2}.

\item $\mathcal{M}_{Lan}(a)$ is a moduli space of equivariant $SO(q)$-bundles on an auxiliary double cover \linebreak $\pi_C : C\rightarrow \Sigma$ satisfying a condition over the fixed points (\autoref{section3}). The reconstruction of the $SO(p+q,p)$-Higgs bundle $(E,\Phi)$ from this data involves taking an extension of the form
\[
0 \to V_0 \to E \to F \otimes K^{1/2} \to 0,
\]
where $F$ is the $Sp(2p,\mathbb{R})$-Higgs bundle associated to the Cayley moduli space and $V_0$ is the invariant direct image under $\pi_C : C \to \Sigma$ of the equivariant orthogonal bundle on $C$. In the case $q=1$, this procedure takes us from an $Sp(2p , \mathbb{R})$-Higgs bundle to an $SO(p+1,p)$-Higgs bundle in a way that is related to Langlands duality of the corresponding complex groups $Sp(2p,\mathbb{C})$ and $SO(2p+1,\mathbb{C})$. For this reason, we regard the relation between the original $SO(p+q,p)$-Higgs bundle $(E,\Phi)$ and the equivariant orthogonal bundle $M \in \mathcal{M}_{Lan}(a)$ as a Langlands type correspondence.


\item The {\it extension data $\tau$} is given by the extension class defining the above extension. The requirement that $E$ is an $SO(p+q,q)$-Higgs bundle limits the possible choices of extension to take values in a torsor over the group $\mathbb{Z}_2^{4p(g-1)-1}$.
\smallbreak
\end{itemize}

As will be explained in \autoref{section-quadratic}, the space $\mathcal{M}_{Lan}(a)$ of equivariant bundles on $C$ is closely related to moduli spaces of {\it quadratic bundles}, objects that have played an important role when studying Higgs bundles for groups of low rank, and which now we show are fundamental for the analysis of all $SO(p+q,p)$-Higgs bundles in general.  \\
 
 It has been predicted by Guichard and Wienhard \cite[Conjecture 5.6]{anna2} that additional connected components coming from {\it positive representations} (through the notion of $\Theta$-positivity), giving further families of higher Teichm\"uller spaces, appear in the moduli space of surface group representations into $SO(p+q,p)$ for $q>1$. From the   perspective of \autoref{teorema1}, natural candidates for such components are those containing Higgs bundles whose spectral data $(L , M , \tau )$ has the form $( \mathcal{O} , \mathcal{O}^q , \tau )$, as explained in \autoref{extra}.  To prove that this actually gives extra components, the monodromy action \'a la \cite{david,david2} should be taken into consideration as well as the behaviour over singular fibres. On the symplectic side, \autoref{no-extra} addresses the absence of any {\it extra} components in the moduli space of $Sp(2p+2q,2p)$-Higgs bundles. \\
 
\noindent {\bf Characteristic classes (\autoref{section5}).} After introducing the main concepts in \autoref{Higgs},  and describing the {\it spectral data} associated to Higgs bundles in $\mathcal{M}_{SO(p+q,p)}$ leading to \autoref{teorema1} in \autoref{section2}-\autoref{section4}, we study in \autoref{section5} the topological invariants, namely Stiefel-Whitney classes, that can be used to distinguish components of the moduli space of $SO(p+q,p)$-Higgs bundles. \autoref{teorema2} shows how the Stiefel-Whitney classes $\omega_1(W), \omega_2(W)$ and $\omega_2(V)$ of an $SO(p+q,p)$-Higgs bundle can be computed from the Cayley and Langlands type correspondences,  and the extension data $(L , M , \tau)$. In particular, whilst the classes of $W$ are determined purely by the Cayley data $L$, the characteristic class $\omega_2(V)$ depends on all the components of the triple $(L,M,\tau)$. \\

\noindent{\bf Parametrizations of components (\autoref{section6}-\autoref{section-hermitian}).}
From \autoref{teorema1} and \autoref{teorema2} one can see that in general the moduli spaces  $\mathcal{M}_{SO(p+q,p)}$ are parametrized by both abelian (Cayley) and non-abelian (Langlands) data, providing the first examples of real slices of the Hitchin fibration which have such property. This should be compared with the moduli spaces of $G$-Higgs bundles, where $G$ is a split real form which only need abelian data \cite{thesis}, the moduli spaces for $G={\rm SU}(p,p)$, ${\rm SU}(p+1,p)$ which also only need abelian data \cite{peon,umm}, and the moduli spaces for $G=SL(p,\mathbb{H})$, $SO(p,\mathbb{H})$, $Sp(2p,2p)$, which one only need non-abelian data  \cite{nonabelian,thesis}.


For particular values of $p$ and $q$, the geometric properties of $SO(p+q,p)$-Higgs bundles and corresponding representations become concrete through the application of \autoref{teorema1}. In this paper we consider some geometric and topological consequences that follow from  \autoref{teorema1}  and \autoref{teorema2} in the following cases:

\begin{itemize}
\vspace{0.1 in}
\item {\it Quasi-split real forms (\autoref{section6})}: We show  in \autoref{teorema4} that the regular fibres of the moduli space of Higgs bundles for the quasi-split real forms $SO(p+2,p)$ admit the structure of an abelian group of the form
\[
Prym(C , \Sigma) \times (\mathbb{Z}_2)^{(4p^2+2p)(g-1)+1}.
\] 
\smallbreak

\item {\it Split real forms (\autoref{section6})}: In the case of the moduli space $\mathcal{M}_{SO(p+1,p)}$, the existence of extra components as per \cite[Conjecture 5.6]{anna2} is known to  be true \cite{aparicio,brian2}. We show here  that these extra components emerge naturally from the the extension class $\tau$, as suggested in \cite{ort} and  shown in \autoref{teorema5}. Therefore, our methods provide a simple conceptual explanation for the existence of these components. Furthermore, using our spectral data constructions, we are able to write down explicit parametrizations of the Higgs bundles in these components.
\smallbreak

The above has implications for $q>1$: since these components can be seen through the spectral data, we comment in \autoref{extra} on the extra components that may appear for $q>1$. \smallbreak

\item {\it Groups of Hermitian type (\autoref{section-hermitian})}: The study of surface group representations into Hermitian groups of rank 2 reduces to considering the group $SO(2+q,2)$  \cite{anna1}. Using \autoref{teorema1} and \autoref{teorema2}, we show that the Cayley data is parametrized by fibres of the moduli space $\mathcal{M}_{Sp(4,\mathbb{R})}^{max}$ of maximal $Sp(4,\mathbb{R})$ surface representations. In \autoref{section-hermitian} we consider the implications for $SO(2+q,2)$-Higgs bundles of the extra components in $\mathcal{M}_{Sp(4,\mathbb{R})}^{max}$, obtained by Gothen   \cite{gothen}. \smallbreak
\end{itemize}

\noindent{\bf Some further notes on Langlands duality (\autoref{section-structures}).} 
The moduli space $\mathcal{M}_{SO(p+q,p)}$ of $SO(p+q,p)$-Higgs bundles can be thought of as a $(B,A,A)$-brane in the moduli space $\mathcal{M}_{SO(2p+q,\mathbb{C})}$ of $SO(2p+q,\mathbb{C})$-Higgs bundles. According to Langlands duality, interpreted as mirror symmetry between the moduli spaces of Higgs bundles for Langlands dual groups, the mirror of $\mathcal{M}_{SO(p+q,p)}$ should be a $(B,B,B)$-brane in the moduli space for the Langlands dual group of $SO(2p+q,\mathbb{C})$. In \autoref{section-structures}, building on our previous work, we give  a conjectural description of the dual $(B,B,B)$-brane. In particular, we conjecture that the underlying support of the brane depends only on $p$, while the moduli space in which the brane is embedded depends on both $p$ and $q$.
 \smallbreak
\noindent{\bf Acknowledgments}. The authors are thankful for financial support from  from U.S. National Science Foundation grants DMS 1107452, 1107263, 1107367 "RNMS: GEometric structures And Representation varieties" (the GEAR Network) which financed several research visits during which the paper was written. D.~Baraglia is financially supported by the Australian Research Council Discovery Early Career
Researcher Award DE160100024.
L.P.~Schaposnik is partially supported by the NSF grant DMS-1509693, and by the Alexander von Humboldt Foundation.

\section{Higgs bundles and the Hitchin fibration}\label{Higgs}
  
Throughout the paper we will consider a compact Riemann surface $\Sigma$ of genus $g\geq 2$ with canonical bundle $K=T^*\Sigma$. In what follows, we recall some of the main properties of complex and real Higgs bundles, as well as the associated Hitchin fibration. 
  
\subsection{Higgs bundles for complex and real groups}  \label{definitions}
We begin by briefly reviewing the notions of Higgs bundles for real and complex groups  which are relevant to this paper. Further details can be found in standard references such as Hitchin \cite{N1,N2} and Simpson \cite{S1,S2,S3}. Higgs bundles on $\Sigma$ are pairs $(E,\Phi)$ where  \begin{itemize}
 \item $E$   is a holomorphic vector bundle on $\Sigma$,
 \item    the Higgs field $\Phi: E\rightarrow E \otimes K$, is a holomorphic $K$-valued endomorphism.
  \end{itemize}
More generally, for a complex reductive Lie group $\GC$, we have the following \cite{N2}.

\begin{Definition} A $\GC$-Higgs bundle is a pair $(P,\Phi)$, where $P$ is a holomorphic principal $\GC$ bundle, and $\Phi$ is a holomorphic section of ${\rm ad}(P)\otimes K$, where ${\rm ad}(P)$ is the adjoint bundle of $P$. \label{def1}
\end{Definition}

Higgs bundles were introduced by Hitchin in \cite{N1} as solutions of the so-called  {\it Hitchin equations} \begin{eqnarray}
F_A+ [\Phi,\Phi^*]=0, ~{~}~ ~\overline{\partial}_{A}\Phi=0,\label{equation}\end{eqnarray}    where $F_A$ is the curvature of the unitary connection $\nabla_A=\partial_{A}+\overline{\partial}_{A}$ associated to a reduction of structure of $P$ to the maximal compact subgroup of $\GC$. One can construct the moduli space $\MGC$ of solutions to the $\GC$-Hitchin equations, which admits a natural hyperk\"ahler metric over its smooth points. By the work of Hitchin and Simpson, when $\GC$ is semisimple, the existence of a unitary connection satisfying the Hitchin equations is equivalent to polystability of the pair $(P , \Phi )$. From this work it also follows that $\MGC$ can also be identified with the moduli space of polystable $\GC$-Higgs bundles. When $\GC$ is reductive but not semisimple (e.g., $GL(n,\mathbb{C})$) we will simply take $\MGC$ to be the moduli space of polystable $\GC$-Higgs bundles.

Given a real form $G$ of the complex reductive lie group $\GC$, we may define $G$-Higgs bundles as follows. Let $H$ be the maximal compact subgroup of $G$ and consider the Cartan decomposition $\mathfrak{g} = \mathfrak{h}\oplus \mathfrak{m}$ of $\mathfrak{g}$, where $\mathfrak{h}$ is the Lie algebra of $H$, and $\mathfrak{m}$ its orthogonal complement. 
 This induces a decomposition of the Lie algebra $\mathfrak{g}_\C=\mathfrak{h}^{\mathbb{C}}\oplus \mathfrak{m}^{\mathbb{C}}$ of $G_\C$. Note that the Lie algebras satisfy
$ [\mathfrak{h}, \mathfrak{h}]\subset\mathfrak{h}$, $[\mathfrak{h,\mathfrak{m}}]\subset\mathfrak{m}$, $[\mathfrak{m},\mathfrak{m}]\subset \mathfrak{h},$  and there is an induced isotropy representation ${\rm Ad}|_{H^{\mathbb{C}}}: H^{\mathbb{C}}\rightarrow GL(\mathfrak{m}^{\mathbb{C}})$.

\begin{Definition}\label{def2}
 A {\rm principal} $G${\rm -Higgs bundle } is a pair $(P,\Phi)$ where
\begin{itemize}
 \item $P$ is a holomorphic principal $H^{\mathbb{C}}$-bundle on $\Sigma$,
 \item $\Phi$ is a holomorphic section of $P\times_{Ad}\mathfrak{m}^{\mathbb{C}}\otimes K$.
\end{itemize}
\end{Definition}

Similar to the case of Higgs bundles for complex groups, there are notions of stability, semistability and polystability for $G$-Higgs bundles, and one can see that the polystability of a $G$-Higgs bundle for $G\subset GL(n,\mathbb{C})$ is equivalent to the polystability of the corresponding $GL(n,\mathbb{C})$-Higgs bundle. However, a $G$-Higgs bundle can be stable as a $G$-Higgs bundle but not as a $GL(n,\mathbb{C})$-Higgs bundle. We denote by $\mathcal{M}_{G}$ the moduli space of polystable $G$-Higgs bundles on   $\Sigma$.

\subsection{Spectral data and the Hitchin fibration} \label{fibration}  A natural way of studying the moduli spaces  $\mathcal{M}_{\GC}$ of $ \GC$-Higgs bundles is to use the Hitchin fibration  \cite{N2}.   Let $\{p_{1}, \ldots, p_k\}$ be a homogeneous basis for the algebra of invariant polynomials on the Lie algebra  $\mathfrak{g}_{c}$ of $ \GC$, and let $d_{i}$ denote the degree of $p_i$. The {\it Hitchin fibration} is then given by
\begin{eqnarray} h~:~ \mathcal{M}_{ \GC}&\longrightarrow&\mathcal{A}_{ \GC}:=\bigoplus_{i=1}^{k}H^{0}(\Sigma,K^{d_{i}}),
\end{eqnarray} 
 where $h:(E,\Phi)\mapsto (p_{1}(\Phi), \ldots, p_{k}(\Phi))$ is referred to as the {\it Hitchin~map}: it is a proper map for any choice of basis and makes  the  moduli space into an integrable system \cite{N2}.

Each connected component of a generic fibre of the Hitchin map is an abelian variety. In the case of $GL(n,\mathbb{C})$-Higgs bundles this can be seen using spectral data \cite{N2,BNR}. A $GL(n,\mathbb{C})$-Higgs bundle  $(E,\Phi)$ defines an algebraic curve, called the {\em spectral curve} of $(E,\Phi)$:
 \begin{eqnarray}S=\{{\rm det}(\eta I -\Phi)=0\} \subset {\rm Tot}(K), \end{eqnarray}
where ${\rm Tot}(K)$ is the total space of $K$ and $\eta$ is the tautological section of $K$ on ${\rm Tot}(K)$. We say that $(E,\Phi)$ lies in the {\em regular locus} of $\mathcal{M}_{GL(n,\mathbb{C})}$ if the curve $S$ is non-singular. Let $\pi : S \to \Sigma$ denote the natural projection to $\Sigma$ and let $\eta \in H^0( S , \pi^*(K) )$ denote the restriction of the tautological section of $K$ to $S$. If $(E,\Phi)$ is in the regular locus then there exists a line bundle $L \to S$ for which $E = \pi_* L$ and $\Phi$ is obtained by pushing down the map $\eta : L \to L \otimes \pi^*(K)$. In this way, we recover the pair $(E,\Phi)$ from the pair $(S,L)$. We call $(S , L)$ the {\em spectral data} associated to the pair $(E , \Phi)$.

Note that the spectral curve $S$ of the pair $(E , \Phi)$ depends only on  the characteristic polynomial of $\Phi$ and hence depends only on the image of $(E,\Phi)$ under the Hitchin map. In this way, we can associate a spectral curve $S$ to any point $a \in \mathcal{A}_{ \GL(n,\mathbb{C})}$ in the base of the Hitchin system. If $a$ is in the regular locus of $\mathcal{A}_{\GL(n,\mathbb{C})}$, in other words, if the associated spectral curve $S$ is smooth, then the spectral data construction identifies the fibre $h^{-1}(a)$ of the Hitchin system with $Pic(S)$, the Picard variety of the spectral curve $S$. The connected components of $Pic(S)$ are, of course, isomorphic to copies of $Jac(S)$, the Jacobian of $S$. In particular this confirms that the components of the regular fibres are abelian varieties.
  
  \subsection{Complex orthogonal Higgs bundles} Since the core of this paper is on the geometry of the moduli space of orthogonal Higgs bundles, we shall give here a thorough description of these objects and their spectral data. 
From  \autoref{def1}, an $SO(2p+q,\mathbb{C})$-Higgs bundle consists of a pair $(E,\Phi)$ where 
\begin{enumerate}
\item{$E$ is a holomorphic vector bundle of rank $2p+q$ with a non-degenerate symmetric bilinear form $(v,w)$, together with a trivialization of the determinant bundle $\Lambda^{2p+q}E$ as a $\mathbb{Z}_2$-line bundle (i.e. a trivialisation of the principal $\mathbb{Z}_2$-bundle to which $\Lambda^{2p+q} E$ is associated).}
\item{$\Phi\in H^{0}(\Sigma,{\rm End}(E)\otimes K)$ is a Higgs field which satisfies $(\Phi v,w)=-(v,\Phi w)$.}
\end{enumerate}

We denote by $\mathcal{M}_{SO(2p+q,\mathbb{C})}$ the moduli space of $S$-equivalence classes of semi-stable $SO(2p+q,\mathbb{C})$-Higgs bundles. This moduli space has two connected components, labeled  by the second Stiefel-Whitney class $w_{2}(E) \in H^{2}(\Sigma,\mathbb{Z}_{2}) \cong  \mathbb{Z}_{2}$, depending on whether $E$ has a lift to a spin bundle or not \cite{N5}. 

  \subsection{$SO(p+q,p)$-Higgs bundles}\label{orto} From \autoref{def2}, an $SO(p+q,p)$-Higgs bundle  consists of:
\begin{enumerate}
\item{A rank $p+q$ orthogonal bundle $(V,Q_V)$}
\item{A rank $p$ orthogonal bundle $(W,Q_W)$}
\item{A holomorphic bundle map $\beta : W \to V \otimes K$}
\item{An isomorphism $\det(V) \cong \det(W)$ as $\mathbb{Z}_2$-line bundles.}
\end{enumerate}

Given an $SO(p+q,p)$-Higgs bundle $(V,W , \beta)$, the associated $SO(2p+q,\mathbb{C})$-Higgs bundle $(E,\Phi)$ is obtained by setting $E=V\oplus W$ with bilinear form
\[
( (x,y) , (x',y') ) = Q_V(x,x') - Q_W(y,y')
\]
and Higgs field $\Phi:E\rightarrow E\otimes K$   given by
\begin{eqnarray}\Phi=\left(\begin{array}{cc}
              0&\beta\\
\gamma&0
             \end{array} \right),\label{elphi}
\end{eqnarray} 
where $\gamma = \beta^t$ is the orthogonal transpose of $\beta$,   obtained using the orthogonal structures on $V, W$. In the moduli space of $SO(2p+q,\mathbb{C})$ Higgs bundles,   $SO (p,p+q)$ Higgs bundles are fixed points of the involution 
\[\Theta:~(E,\Phi)\mapsto (E, -\Phi)\]corresponding to pairs $(E,\Phi)$ such that there is an isomorphism $f : (E,\Phi) \to (E,-\Phi)$ induced by an involution $f : E \to E$ whose $+1$ and $-1$-eigenspaces have dimensions $p+q$ and $p$ respectively.

The curve defined by the characteristic equation of the Higgs field $\Phi$  is a reducible curve: an $SO(p+q,p)$-Higgs field $\Phi$ always has a zero eigenspace of dimension $\ge q$ since its characteristic polynomial is of the form
\begin{eqnarray}
{\rm det}(\eta-\Phi)= \eta^q(\eta^{2p}+a_{1}\eta^{2p-2}+\ldots +a_{p-1}\eta^{2}+a_{p}).\label{poli}\end{eqnarray}
 In the case of  $q=1$ it is shown in   \cite[Section 4.1]{langlands} that the zero eigenspace $E_{0}$ is given by $E_{0}\cong K^{-2p}$. We will see in   \autoref{sec:quadratic} how a similar characterisation of the zero eigenspace can be made for any $q$ in terms of {\it quadratic bundles}.

The generically irreducible component of the  characteristic polynomial Eq.~\eqref{poli}  defines an associated 2p-fold spectral curve  $\pi:S\rightarrow \Sigma$  whose  equation is
\begin{eqnarray}\eta^{2p}+a_{1}\eta^{2p-2}+\ldots +a_{p-1}\eta^{2}+a_{p}=0,\label{curvepp}\end{eqnarray}
where $a_{i}\in H^{0}(\Sigma, K^{i})$.  By Bertini's theorem, this is a generically smooth curve. In this paper, we are mostly concerned with spectral curves satisfying the following conditions:
\begin{assumption}\label{ass1}
Assume that $S$ is smooth and that $a_{p} , a_{p-1}$ do not simultaneously vanish.
\end{assumption}

The  curve $S$ has an involution $\sigma$ which acts as $\sigma(\eta)=-\eta$. Thus, we may consider the quotient curve $\overline{S}=S/\sigma$ in the total space of $K^{2}$,  for which $S$ is a double cover
$\rho: S\rightarrow \overline{S}$, leading to the following diagram
 \begin{eqnarray}
 \xymatrix{ S\ar[rd]^{\pi}_{2p:1}\ar[rr]_{\rho}^{2:1}&&\bar S\ar[ld]^{p:1}_{\bar \pi}\\
 &\Sigma&
 }\label{hola}
 \end{eqnarray}
  The covers $S$ and $\bar S$ have, respectively,  genera 
\begin{equation*}
\begin{aligned}
g_{S}&= 1+4p^{2}(g-1),\\
g_{\bar S}&=(2p^2-p)(g-1)+1.
\end{aligned}
\end{equation*}
By the adjunction formula, the canonical bundles of these covers are, respectively, 
$K_S=\pi^*K^{2p}$ and
$K_{\bar S}=\bar \pi^*K^{2p-1}.
$ 
For $\xi=\eta^2$ the tautological section of $K^2$, the quotient curve is given by 
\begin{eqnarray}\bar S=\{\xi^{p}+a_{1}\xi^{p-1}+\ldots +a_{p-1}\xi+a_{p}=0\}\subset Tot(K^2) \label{curvep}\end{eqnarray}
and will become a key ingredient when constructing vector bundles associated to those Higgs bundles with signature. 

Note that smoothness of $S$ is equivalent to smoothness of $\overline{S}$ and that $a_{p}$ has only simple zeros. If $S$ is smooth, the condition that $a_p$ and $a_{p-1}$ do not simultaneously vanish is equivalent to requiring that the fixed points of $\sigma$ are simple branch points of $\pi : S \to \Sigma$.

Let $\mathcal{M}_{SO(p+q,p)}$ denote the moduli space of semistable $SO(p+q,p)$-Higgs bundles (see \cite{aparicio} for the construction of these spaces). The restriction of the Hitchin map to the real moduli space can be identified with the map
\[
h : \mathcal{M}_{SO(p+q,p)} \to \mathcal{A}_{SO(p+q,p)} = \bigoplus_{i=1}^p H^0( \Sigma , K^{2i}), \quad \quad h(E,\Phi) = (a_1 , a_2 , \dots , a_p),
\]
where $a_1 , \dots , a_p$ are as in \eqref{poli}. Let $\mathcal{A}^{\rm reg}_{SO(p+q,p)}$ denote the locus of points $a \in \mathcal{A}^{\rm reg}_{SO(p+q,p)}$ satisfying \autoref{ass1} and let $\mathcal{M}^{\rm reg}_{SO(p+q,p)}$ denote the pre-image of $\mathcal{A}^{\rm reg}_{SO(p+q,p)}$, so that $h$ restricts to a map $h : \mathcal{M}^{\rm reg}_{SO(p+q,p)} \to \mathcal{A}^{\rm reg}_{SO(p+q,p)}$.

\section{A Cayley type correspondence:\\ Associated $K^2$-twisted ${\rm GL}(p,\mathbb{R})$-Higgs bundles}\label{section2}
Cayley correspondences for Higgs bundles have long been studied, and they provide a procedure in which one can obtain a correspondence between the moduli space of Higgs bundles for a  Hermitian group of tube type, and the moduli space of $K^2$-twisted Higgs bundles for a certain associated group. The interested reader should refer to \cite{Roberto} and references therein for further details of these correspondences. 

In what follows we will construct a naturally defined $K^2$-twisted $GL(p , \mathbb{R})$-Higgs bundle associated to $SO(p+q,p)$-Higgs bundles $(E=V\oplus W, \Phi)$ satisfying \autoref{ass1} (for the case $q=1$, c.f. \cite{ort}), providing a Cayley-type correspondence.  Note that, as mentioned in \eqref{cay}, our aim is to understand the regular fibres of the moduli space $\mathcal{M}_{SO(p+q,p)}$ in terms of two associated moduli spaces $\mathcal{M}_{Cay}, \mathcal{M}_{Lan}$. Only when the group $SO(p+q,p)$ is of Hermitian type (i.e., only for $p=2$), one recovers the standard Cayley correspondence between $SO_0(2+q,2)$-Higgs bundles and $K^2$-twisted $SO_0(1,1)\times SO(1,q+1)$-Higgs bundles (for the latter, see \cite[Table C.4.]{Roberto}). This case will be addressed in \autoref{section-hermitian} where the moduli spaces $\mathcal{M}_{SO(2+q,2)}$ are studied in further detail. 

\subsection{Unitary structure} \label{uni}
Under   \autoref{ass1}, we have that $a_{p}$ is not identically zero. This means that $\Phi$ generically has rank $2p$ and thus $\beta$ and $\gamma$ both generically have rank $p$. So $\beta$ is generically injective and $\gamma$ generically has a $q$-dimensional kernel. We will see there is a canonically induced $U(p,p)$-Higgs bundles which can be obtained by considering the induced bundle
\begin{eqnarray}
&V_{0}:={\rm ker}(\gamma)&~{\rm where~}~ \gamma: V \rightarrow W\otimes K. \label{W1}
\end{eqnarray}
By this, we mean that $\mathcal{O}(V_0)$ is the kernel of $\gamma : \mathcal{O}(V) \to \mathcal{O}(W \otimes K)$,  defining  a subbundle $V_0 \subset V$.

\begin{Proposition} \label{E+}
Let $V_{1}:=V/V_0$. We obtain an induced $U(p,p)$-Higgs bundle given by:
 \begin{eqnarray}
 &(E_+:=   V_1\oplus  W, \Phi_+), \label{H1}
  \end{eqnarray}
where $\Phi_+$ is determined by the following commutative diagram
\[
\xymatrix{
V \oplus W \ar[d] \ar[r]^-{\Phi} & V \oplus W \ar[d] \\
V_1 \oplus W \ar[r]^-{\Phi_+} & V_1 \oplus W.
}
\]
\end{Proposition}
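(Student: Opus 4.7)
The plan is to construct $V_0 = \ker(\gamma)$ as a genuine subbundle of $V$, check its rank, and then verify that $\Phi$ descends along the quotient $V \to V_1 = V/V_0$ to a Higgs field with the off-diagonal shape required of a $U(p,p)$-Higgs bundle.

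First I would establish that the subsheaf $V_0 = \ker\bigl(\gamma : \mathcal{O}(V) \to \mathcal{O}(W \otimes K)\bigr)$ is in fact a holomorphic subbundle of $V$. The quotient $\mathcal{O}(V)/V_0$ embeds into $\mathcal{O}(W \otimes K)$ via $\gamma$ and is therefore torsion-free; on the smooth curve $\Sigma$, any torsion-free coherent sheaf is locally free, so $V/V_0$ is a vector bundle and $V_0 \subset V$ is a subbundle. To compute its rank, I would use \autoref{ass1}: since $a_p \not\equiv 0$ in \eqref{poli}, the Higgs field $\Phi$ has generic rank $2p$, so both $\beta$ and $\gamma$ are generically of rank $p$. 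Hence $V/V_0$ has generic (and therefore constant) rank $p$, and $V_0$ has rank $q$. In particular $V_1 := V/V_0$ is a rank-$p$ bundle, matching the rank of $W$.

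Next I would check that the Higgs field descends to $V_1 \oplus W$. Composing $\Phi$ with the surjection $V \oplus W \twoheadrightarrow V_1 \oplus W$ tensored with $K$ yields a map
\[
\widetilde{\Phi} : V \oplus W \longrightarrow (V_1 \oplus W) \otimes K, \qquad (v,w) \longmapsto \bigl(\,\overline{\beta(w)}\,,\ \gamma(v)\bigr),
\]
where $\overline{\,\cdot\,}$ denotes the class in $V_1 \otimes K$. On $V_0 \oplus 0$ the first component is zero since the input to $\beta$ vanishes, and the second component is zero by definition of $V_0 = \ker(\gamma)$. Thus $\widetilde{\Phi}$ factors through $V_1 \oplus W$, defining the desired $\Phi_+$ and making the stated diagram commute.

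Finally I would observe the structural form of $\Phi_+$. By construction $\Phi_+$ sends $(\bar v, w) \mapsto (\overline{\beta(w)}, \gamma(v))$, so in matrix form
\[
\Phi_+ = \begin{pmatrix} 0 & \beta_+ \\ \gamma_+ & 0 \end{pmatrix}, \qquad \beta_+ : W \to V_1 \otimes K,\ \gamma_+ : V_1 \to W \otimes K,
\]
where $\beta_+$ is the composition of $\beta$ with the quotient and $\gamma_+$ is the map induced by $\gamma$ on $V_1 = V/\ker(\gamma)$. Together with the fact that $V_1$ and $W$ are both rank $p$, this is precisely the data of a $U(p,p)$-Higgs bundle. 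The main (and only real) obstacle is the first step—passing from the subsheaf $\ker(\gamma)$ to a genuine subbundle with controlled rank—which is where \autoref{ass1} enters; the rest is formal from the block shape of $\Phi$ in \eqref{elphi}.
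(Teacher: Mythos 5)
Your proof is correct and follows essentially the same route as the paper: you define $\gamma_+$ as the map induced by $\gamma$ on $V_1 = V/\ker(\gamma)$ and $\beta_+$ as $\beta$ composed with the quotient, then read off the off-diagonal block form of $\Phi_+$, which is exactly the content of the paper's commutative diagrams \eqref{D1}--\eqref{D4}. The only difference is that you explicitly verify that $V_0$ is a genuine subbundle of the correct rank (via torsion-freeness of the quotient and \autoref{ass1}), a point the paper establishes in the discussion of \autoref{uni} preceding the proposition rather than inside the proof itself.
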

  \begin{proof}
By the definition of $V_0$,  the map $\gamma$ factors as (c.f. \cite[\textsection 4]{NS}):
\begin{equation}\xymatrix{
0 \ar[r] & V_0 \ar[r] & V \ar[r] \ar[d]^-\gamma & V_1 \ar[r] \ar[dl]^-{\gamma_+} & 0 \\
& & W \otimes K & &
}\label{D1}
\end{equation}
where the top row is a short exact sequence of vector bundles and $\gamma_+ : V_1 \to W \otimes K$ is generically an isomorphism. Dually, we obtain:
\begin{equation}\xymatrix{
0 & V_0^* \otimes K \ar[l] & V \otimes K \ar[l] & V_1^* \otimes K \ar[l] & 0 \ar[l] \\
& & W \ar[u]^-\beta \ar[ur]_-{\gamma_+^t} & &
}\label{D2}
\end{equation}
Define $\beta_+ : W \to V_1 \otimes K$ by the following diagram:
\begin{equation}\xymatrix{
0 \ar[r] & V_0 \otimes K \ar[r] & V \otimes K \ar[r] & V_1 \otimes K \ar[r] & 0, \\
& & W \ar[u]^-\beta \ar[ur]_-{\beta_+} & &
}\label{D3}
\end{equation}
and note that there is a dual diagram:
\begin{equation}\xymatrix{
0 & V_0^* \ar[l] & V \ar[l] \ar[d]^-\gamma & V_1^* \ar[l] \ar[dl]^-{\beta^t_+} & 0 \ar[l] \\
& & W \otimes K & &
}\label{D4}
\end{equation}
From \eqref{D1} and \eqref{D2} there are bundle maps $\gamma_+ : V_1 \to W \otimes K$ and $\beta_+ : W \to V_1 \otimes K$. The data $(V,W,\beta_+ , \gamma_+)$ defines a $U(p,p)$-Higgs bundle \cite{umm}. The underlying $GL(2p,\mathbb{C})$-Higgs bundle of this $U(p,p)$-Higgs bundle from Eq.~\eqref{H1} is given by:
\begin{equation}
E_+ = V_1 \oplus W, \quad \quad \Phi_+ = \left( \begin{matrix} 0 & \beta_+ \\ \gamma_+ & 0 \end{matrix} \right).\label{gl2p}
\end{equation}
To finish the proof, we just have to note that $\Phi_+$ as defined by \eqref{gl2p} agrees with $\Phi_+$ as given in the statement of the proposition. This follows easily from the above commutative diagrams.
\end{proof}

\begin{Remark}
From the construction of the $U(p,p)$-Higgs bundle $(E_+:= V_1\oplus W , \Phi_{+})$, we have that 
\begin{equation*}
\det( \eta - \Phi_+) = \eta^{2p} + a_1 \eta^{p-1} + \dots + a_{p},
\end{equation*}
and thus the 2p-fold cover $\pi:S\rightarrow \Sigma$ is in fact the spectral curve of $(E_+ , \Phi_+)$.
\end{Remark} 

\subsection{Symplectic structure} In what follows we show that the $U(p,p)$-Higgs bundle defined in \autoref{uni} give rise to a real symplectic Higgs bundle. 

\begin{Lemma}\label{1hola}
Let $( E , \Phi )$ be a $GL(n,\mathbb{C})$-Higgs bundle and suppose that $det( \Phi )$ vanishes to first order at $x \in \Sigma$. Then $Ker( \Phi_x )$ is $1$-dimensional.
\end{Lemma}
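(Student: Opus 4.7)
The plan is to work locally and reduce the statement to an elementary assertion about a matrix of holomorphic functions. First I would pick a local coordinate $z$ near $x$, trivialize both $E$ and $K$ in a neighborhood of $x$, and write $\Phi$ in this frame as a matrix $M(z)$ of holomorphic functions. The hypothesis then translates to: $\det M(z)$ is a holomorphic function at $z=0$ with a simple zero, i.e. $\det M(0) = 0$ but $\tfrac{d}{dz}\det M(z)\big|_{z=0} \neq 0$. The claim to establish is that $\dim \ker M(0) = 1$.

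Since $\det M(0)=0$, the kernel is at least one-dimensional, so I only need to rule out $\dim \ker M(0) \geq 2$. The key tool is Jacobi's formula for the derivative of the determinant,
\[
\frac{d}{dz}\det M(z) \;=\; \Tr\bigl(\mathrm{adj}(M(z))\cdot M'(z)\bigr),
\]
where $\mathrm{adj}$ denotes the classical adjugate matrix, whose entries are (up to sign) the $(n-1)\times(n-1)$ minors of $M$. The observation that drives the proof is that if $\dim \ker M(0) \geq 2$, then $\mathrm{rank}\,M(0) \leq n-2$, so every $(n-1)\times(n-1)$ minor of $M(0)$ vanishes and hence $\mathrm{adj}(M(0)) = 0$.

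Plugging $z=0$ into Jacobi's formula would then force $\tfrac{d}{dz}\det M(z)|_{z=0} = 0$, contradicting the assumption that $\det(\Phi)$ vanishes to first order at $x$. Therefore $\dim \ker \Phi_x = \dim \ker M(0) = 1$. I should also note that this local computation is independent of the chosen trivializations of $E$ and $K$, since changes of trivialization conjugate $M(z)$ by an invertible holomorphic matrix and multiply it by a nonvanishing scalar, neither of which affects the dimension of the kernel at $z=0$.

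There is no real obstacle here; the only mild subtlety is making sure one invokes the derivative-of-determinant identity at the matrix level (equivalently, one could use Smith normal form over the DVR $\mathcal{O}_{\Sigma,x}$ to write $M(z) = A(z)\,\mathrm{diag}(z^{d_1},\dots,z^{d_n})\,B(z)$ with $A,B$ invertible, note that $\sum d_i = 1$, and conclude directly that exactly one $d_i$ equals $1$). Both approaches are short and routine.
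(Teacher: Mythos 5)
Your proof is correct and follows essentially the same route as the paper: trivialize locally, assume $\dim\ker\Phi_x\ge 2$, and derive a contradiction by showing $\det\Phi$ would then vanish to order at least two at $x$. The only cosmetic difference is that the paper adapts the local frame so that two columns of the matrix vanish at $x$ and concludes by multilinearity, whereas you leave the frame arbitrary and invoke Jacobi's formula via vanishing of the adjugate (or Smith normal form); all three variants are equally valid.
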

\begin{proof}
Suppose on the contrary that $\dim(Ker( \Phi_x )) \ge 2$. In such case there exists linearly independent $e_1 , e_2 \in E_x$ with $\Phi_x(e_1) = \Phi_x(e_2) = 0$. Extending $e_1,e_2$ to a basis $e_1 , e_2 , \dots e_n$ of $E_x$,  one can choose  a local frame $\tilde{e}_1 , \tilde{e}_2 , \dots , \tilde{e}_n$ for $E$ with $\tilde{e}_j(x) = e_j$ and  a local trivialisation of $K$. Then $\Phi_x( \tilde{e}_j ) = \sum_{i=1}^{n}b_{ij}(x) \tilde{e}_i(x)$ for some holomorphic functions $b_{ij}(x)$ on the Riemann surface $\Sigma$. Since $\Phi_x( e_j ) = 0$ for $j=1,2$, we have that $b_{ij}(x) = 0$ for $j = 1,2$. Therefore two columns of the matrix $[ b_{ij} ]$ vanish at $x$ and so $\det [ b_{ij} ] = \det( \Phi )$ vanishes to at least second order at $x$, a contradiction.
\end{proof}

\begin{Lemma}\label{gammaisom} The map $\gamma_+ : V_1 \to W \otimes K$ is an isomorphism.
\end{Lemma}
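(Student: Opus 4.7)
The plan is to show that $\gamma_+$ is fiberwise injective; since $V_1$ and $W \otimes K$ both have rank $p$, this forces $\gamma_+$ to be an isomorphism. The construction of $(E_+,\Phi_+)$ in \autoref{E+} gives $\det(\Phi_+) = a_p$ as a section of $K^{2p}$, and under \autoref{ass1} the smoothness of $S$ implies that $a_p$ has only simple zeros. Therefore at any $x\in\Sigma$ with $a_p(x)\neq 0$ the endomorphism $\Phi_{+,x}$ is invertible, and $\gamma_{+,x}$ is automatically an isomorphism. The problem thus reduces to the finitely many points at which $a_p$ has a first-order zero.

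At such a point $x$, I would apply \autoref{1hola} to $(E_+,\Phi_+)$ to conclude that $\dim\ker\Phi_{+,x}=1$. The off-diagonal block shape of $\Phi_+$ produces a splitting $\ker\Phi_{+,x}=\ker\gamma_{+,x}\oplus\ker\beta_{+,x}$, so at most one summand can be nontrivial. To force the nontrivial summand onto the $\beta_+$ side, the key input is the transpose relation $\gamma=\beta^t$ coming from the orthogonal forms on $V$ and $W$, which guarantees $\rank(\gamma_x)=\rank(\beta_x)$ at every point of $\Sigma$.

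The argument then proceeds by contradiction. Suppose $\ker\gamma_{+,x}\neq 0$. Because $V_0=\ker\gamma$ is a rank-$q$ subbundle and the induced map on $V_1=V/V_0$ has fiber kernel $\ker\gamma_x/(V_0)_x$, this forces $\dim\ker\gamma_x\geq q+1$, hence $\rank(\gamma_x)\leq p-1$. The transpose identity then gives $\rank(\beta_x)\leq p-1$, so $\dim\ker\beta_x\geq 1$. Since $\beta_+$ is the composition of $\beta$ with the projection $V\otimes K\to V_1\otimes K$, we have $\ker\beta_x\subseteq\ker\beta_{+,x}$, and hence $\dim\ker\beta_{+,x}\geq 1$. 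Adding the two contributions gives $\dim\ker\Phi_{+,x}\geq 2$, contradicting \autoref{1hola}.

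The most delicate point is bookkeeping at the exceptional zeros of $a_p$: the sheaf-theoretic kernel $V_0$ can be strictly smaller than the pointwise kernel $\ker\gamma_x$, and one must carefully identify $\ker\gamma_{+,x}$ and $\ker\beta_{+,x}$ with a quotient and a subspace of the original kernels. The transpose relation $\gamma=\beta^t$ is the mechanism by which a hypothetical failure of injectivity of $\gamma_+$ would propagate to a failure of $\beta_+$, and combined with the tight dimension count provided by \autoref{1hola} it is precisely what rules out any such failure.
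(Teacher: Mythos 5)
Your proof is correct and takes essentially the same route as the paper: both arguments reduce to the zeros of $a_p$, invoke \autoref{1hola} to bound $\dim\ker\Phi_{+,x}$ by one, split $\ker\Phi_{+,x}=\ker\gamma_{+,x}\oplus\ker\beta_{+,x}$, and reach a contradiction by showing that a kernel of $\gamma_{+,x}$ forces a kernel of $\beta_{+,x}$. The only (cosmetic) difference is the transfer mechanism: the paper uses the factorization $W \xrightarrow{\gamma_+^t} V_1^*\otimes K \to V\otimes K \to V_1\otimes K$ of $\beta_+$, so that non-injectivity of $\gamma_{+,x}$ immediately gives non-injectivity of $\beta_{+,x}$, whereas you run the equivalent rank count $\rank(\beta_x)=\rank(\gamma_x)$ via $\gamma=\beta^t$ together with $\ker\beta_x\subseteq\ker\beta_{+,x}$.
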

\begin{proof}By   \autoref{1hola}, over each $x\in \Sigma$ the Higgs field $\Phi_+$ from Eq.~\eqref{H1} satisfies the following:
\begin{itemize}
\item{If $a_{p}(x) \neq 0$, then $Ker(\Phi_+) = \{ 0 \}.$}
\item{If $a_{p}(x) = 0$, then $Ker(\Phi_+)$ is $1$-dimensional.}
\end{itemize}
It follows that at each zero of $a_{p}$, either $\beta_+$ or $\gamma_+$ has a $1$-dimensional kernel, and the other is injective. But from its definition, we see that $\beta_+$ factors as:
$\xymatrix{
W \ar[r]^-{\gamma_+^t} & V_1^* \otimes K \ar[r] & V \otimes K \ar[r] & V_1 \otimes K.
}
$ Therefore if $\gamma_+$ has a kernel, so does $\beta_+$. It follows that $\gamma_+$ is necessarily everywhere injective and hence $\gamma_+ : V_1 \to W \otimes K$ is an isomorphism.
\end{proof}
\begin{Proposition}\label{sim}
For each choice of $K^{1/2}$, the Higgs bundle $(E_+ , \Phi_+)$ from \eqref{gl2p} defines a canonical $Sp(2p, \R)$-Higgs bundle $(F,\Phi_F)$ with maximal Toledo invariant, where
\[F = (W \otimes K^{1/2}) \oplus (W \otimes K^{-1/2}), \quad \quad \Phi_F = \left( \begin{matrix} 0 &\gamma_+ \circ \beta_+\\ Id & 0 \end{matrix} \right).
\]
\end{Proposition}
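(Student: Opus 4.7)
The plan is to leverage \autoref{gammaisom}: since $\gamma_+ : V_1 \to W\otimes K$ is an isomorphism, one can identify $V_1$ with $W\otimes K$, which rewrites $(E_+,\Phi_+)$ in a shape from which the $Sp(2p,\R)$-structure becomes manifest after a uniform twist by $K^{-1/2}$.

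First, I would transport $(E_+,\Phi_+)$ along $\gamma_+$. Under the resulting identification of $V_1$ with $W\otimes K$, the block $\gamma_+$ becomes the identity on $W\otimes K$, while $\beta_+$ becomes $(\gamma_+\otimes 1)\circ \beta_+ : W\to W\otimes K^2$. Twisting uniformly by $K^{-1/2}$ turns the underlying bundle $V_1\oplus W$ into $F = (W\otimes K^{1/2})\oplus(W\otimes K^{-1/2})$ and yields the Higgs field $\Phi_F$ in precisely the stated form.

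Second, I would exhibit the symplectic structure and verify the symplectic Higgs bundle conditions. Using $Q_W$ to identify $W\cong W^*$, we obtain $W\otimes K^{-1/2} \cong (W\otimes K^{1/2})^*$, so setting $U := W\otimes K^{1/2}$ gives $F \cong U\oplus U^*$ with its canonical symplectic form. An $Sp(2p,\R)$-structure on $(F,\Phi_F)$ then amounts to $\gamma_F := \mathrm{Id}$ and $\beta_F := \gamma_+\circ\beta_+$ being symmetric, i.e.\ defining sections of $\mathrm{Sym}^2 U^*\otimes K$ and $\mathrm{Sym}^2 U\otimes K$ respectively, after pairing with $Q_W$. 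Symmetry of $\gamma_F$ is immediate from symmetry of $Q_W$. For $\beta_F$, the factorisations \eqref{D1} and \eqref{D3} imply $\gamma_+\circ\beta_+ = \gamma\circ\beta$ as maps $W\to W\otimes K^2$; combining this with the identity $\gamma = \beta^t$ (which encodes skew-symmetry of $\Phi$ with respect to $Q_V - Q_W$) yields
\[
Q_W(\gamma_+\beta_+\, w_1, w_2) \;=\; Q_V(\beta w_1, \beta w_2),
\]
which is symmetric in $w_1,w_2$ by symmetry of $Q_V$.

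Finally, for maximality of the Toledo invariant, I would compute $\deg(U) = \deg(W) + p(g-1)$. Since $W$ is an orthogonal bundle with two-torsion determinant, $\deg(W) = 0$, and hence $\deg(U) = p(g-1)$, which saturates the Milnor-Wood bound for $Sp(2p,\R)$-Higgs bundles. The main technical obstacle is to track the various $K^{1/2}$-twists and the identifications $W\cong W^*$ consistently, so that the symplectic form on $F$, the symmetry of the two blocks of $\Phi_F$, and their provenance in the original $SO(p+q,p)$-data all line up; once the bookkeeping is fixed, each verification reduces to a short diagram chase.
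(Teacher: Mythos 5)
Your proposal is correct and follows essentially the same route as the paper: identify $V_1$ with $W\otimes K$ via the isomorphism $\gamma_+$ from \autoref{gammaisom}, twist uniformly by $K^{-1/2}$, exhibit the Lagrangian splitting using $Q_W$ to identify $W\otimes K^{-1/2}$ with $(W\otimes K^{1/2})^*$, and deduce symmetry of $\beta_F=\gamma_+\circ\beta_+=\gamma\circ\beta$ from $\gamma=\beta^t$. The only cosmetic difference is that you certify maximality by computing $\deg(W\otimes K^{1/2})=p(g-1)$ directly (using $\deg W=0$ for an orthogonal bundle), whereas the paper reads it off from the fact that the lower-left block of $\Phi_F$ is the identity; these are equivalent.
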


\begin{proof}
Recall that in Eq.~\eqref{gl2p} we   constructed the $GL(2p , \mathbb{C})$-Higgs bundle $(E_+ , \Phi_+)$. Let $K^{1/2}$ be a fixed choice of square root of $K$. We will now tensor $(E_+ , \Phi_+)$ by $K^{-1/2}$ to obtain a Higgs bundle $(F = E_+ \otimes K^{-1/2} , \Phi_F = \Phi_+ \otimes Id )$. Using $\gamma_+$ to identify $V_1$ with $W \otimes K$, we have:
\begin{equation}
F = (W \otimes K^{1/2}) \oplus (W \otimes K^{-1/2}), \quad \quad \Phi_F = \left( \begin{matrix} 0 & \beta_F \\ Id & 0 \end{matrix} \right)\label{phif}
\end{equation}
where   $\beta_F:=\gamma_+ \circ \beta_+ : W \to V_1 \otimes K \cong W \otimes K^2$.
Note that the orthogonal structure $Q_W$ of $W$ satisfies   $Q_W( \beta_F a , b ) = Q_W( a , \beta_F b)$: indeed,    by the   diagrams in \eqref{D1}-\eqref{D4} the map $\beta_F$ can be written as 
$W \buildrel \beta \over \longrightarrow V \otimes K \buildrel \gamma \over \longrightarrow W \otimes K^2$.
Then since $\gamma = \beta^t$, we have that $(\gamma \beta)^t = \beta^t \gamma^t = \gamma \beta$. Using the orthogonal structure $Q_W$, one can make $(F,\Phi_F)$ into an $Sp(2p, \mathbb{R})$-Higgs bundle as follows. The symplectic form $\omega_F$ is defined to be:
\begin{equation*}
\omega_F( (a,b) , (c,d) ) = Q_W(a,d) - Q_W(b,c).
\end{equation*}
It is straightforward to check that $\omega_F( \Phi_F u , v) = -\omega_F( u , \Phi_F v)$, and so from \autoref{def1} we have that $(F,\Phi_F)$ is an $Sp(2p , \mathbb{C})$-Higgs bundle. In fact, this is an $Sp(2p, \mathbb{R})$-Higgs bundle since $F$ splits into a sum $F = (W \otimes K^{1/2}) \oplus (W \otimes K^{-1/2})$ of Lagrangian subbundles and $\Phi_F$ is off-diagonal with respect to this splitting. Moreover, since $\Phi_F$ is as in \eqref{phif}, this means that $(F , \Phi_F)$ is an $Sp(2p , \mathbb{R})$-Higgs bundle with maximal Toledo invariant.
\end{proof}

\subsection{Symplectic Cayley correspondence}\label{Cay} Under the Cayley correspondence, maximal $Sp(2p  , \mathbb{R})$-Higgs bundles correspond to $K^2$-twisted $GL(p , \mathbb{R})$-Higgs bundles \cite{Roberto}. Recall that a $K^2$-twisted $GL(p , \mathbb{R})$-Higgs bundle is a triple  $(W , Q_W , \beta)$ consisting of a rank $p$ orthogonal vector bundle $(W , Q_W)$ and a holomorphic   map $\beta : W \to W \otimes K^2$ such that   $Q_W( \beta a , b) = Q_W( a , \beta b)$. Therefore, in our case the $K^2$-twisted $GL(p,\mathbb{R})$-Higgs bundle determined by $(F , \Phi_F)$ is exactly the triple $(W , Q_W , \beta_F)$.

 The construction of the $Sp(2p,\mathbb{R})$-Higgs bundle $(F , \Phi_F)$ from $(V , W , \beta)$ involved the choice of a square root of $K$. Similarly, the Cayley correspondence relating $(F , \Phi_F)$ to $(W , \beta_F)$ requires a choice of square root of $K$. In the discussion above we have chosen the same square root in both instances. With this convention in place, one sees that the construction of $(W , \beta_F)$ from $(V , W , \beta)$ does not depend on the choice of this square root, so $(W , \beta_F)$ is canonically associated to $(V  ,W , \beta)$. 
 
\begin{Remark}
One should note that the Cayley partner can be viewed directly from the $U(p,p)$-Higgs bundle $(E_+,\Phi_+)$ for a choice of square root of $K$, as described in \cite[Remark 3.7]{umm}. In particular,  the spectral curve of the $K^2$-twisted $GL(p,\mathbb{R})$-Higgs bundle determined by $(F , \Phi_F)$ is given by $\overline S$ as in Eq.~\eqref{hola}.
\end{Remark}
  
 The $K^2$-twisted $GL(p , \mathbb{R})$-Higgs bundles 
associated to the maximal $Sp(2p  , \mathbb{R})$-Higgs bundles obtained from the $SO(p+q,p)$-Higgs pairs above can be described also in terms of spectral data. Indeed, since $GL(p , \mathbb{R})$ is the split real form of $GL(p,\C)$, from \cite[Theorem 4.12]{thesis} we have that these Higgs bundles over a smooth spectral curve $\bar S$ in the regular locus of the Hitchin fibration are given by set of $2$-torsion points in the Jacobian of $\bar S$:
\begin{eqnarray}
{\rm Jac}(\bar S)[2]:=\{L\in {\rm Jac}(\bar S)~|~L^2\cong \mathcal{O} \}. \label{spectralupp}\label{jac}
\end{eqnarray}

As will be recalled in \autoref{section4}, the relation between $W$ and $L$ is that $W = \overline{\pi}_*( L \otimes \overline{\pi}^* K^{(p-1)})$. Since $L$ has order $2$, it can be viewed as a line bundle with orthogonal structure. Then $W$ inherits an orthogonal structure by relative duality.

\section{A Langlands type correspondence:\\
Quadratic bundles and the auxiliary spectral curve}\label{sec:quadratic}\label{section3}
 
Recall that in \autoref{section2}, the moduli spaces of $SO(p+q,p)$-Higgs bundles were shown to have an associated symplectic $Sp(2p,\C)$-Higgs bundle. In this and the following section we will study in the spirit of Langlands duality, the problem of reconstructing the $SO(2p+q,\mathbb{C})$-Higgs bundle starting from $Sp(2p,\mathbb{C})$. In the case $q=1$, the group $SO(2p+1,\mathbb{C})$ is the Langlands dual of $Sp(2p,\mathbb{C})$ and as shown by Hitchin \cite{langlands}, the process of reconstructing the $SO(2p+1,\mathbb{C})$-Higgs bundle exhibits the duality between the fibres of the $Sp(2p,\mathbb{C})$ and $SO(2p+1,\mathbb{C})$-Higgs bundle moduli spaces. For $q>1$, we refer to the relation between $Sp(2p,\mathbb{C})$ and $SO(2p+q,\mathbb{C})$-Higgs bundles as a {\em Langlands type correspondence}.
 
We will first show in \autoref{section-quadratic} that part of the data needed to recover the orthogonal Higgs bundle is parametrized by the moduli space of quadratic bundles $(V_0,Q_0)$  with fixed determinant (the relevant definitions are given in \autoref{section-quadratic}). We will then show that this space can actually be identified with a certain moduli space $\mathcal{M}_{Lan}$ of equivariant orthogonal bundles on an auxiliary spectral curve. This is shown in \autoref{quad1}, by considering the auxiliary double cover
\begin{equation}
C = \{ \zeta^2 - a_p=0\} \subset Tot(K^p), \label{curve}
\end{equation}
where $\zeta$ is the tautological section of $K^p$ on $Tot(K^p)$. These spaces $\mathcal{M}_{Lan}$ will become fundamental when describing the nonabelianization of orthogonal Higgs bundles in \autoref{section4}, and thus in what follows, we consider some general properties of quadratic bundles in order to later understand the ones arising from Higgs bundles.

 \subsection{Quadratic bundles and double covers}\label{section-quadratic} 
 Understanding how vector bundles on Riemann surfaces can be obtained as direct images of bundles on coverings has been of interest for many decades, and the question is closely related to Higgs bundles (e.g., see \cite{BNR}). In what follows we describe how  {\it quadratic bundle} on $\Sigma$ can be obtained naturally using double covers of $\Sigma$. 
\begin{Definition}
A {\it quadratic bundle} is a pair $(V_0 , Q_0)$, where $V_0$ is a holomorphic vector bundle and $Q_0$ is a holomorphic section of $Sym^2(V_0^*)$. 
\end{Definition}
Given $(V_0 , Q_0)$, we may view $Q_0$ as a map $Q_0 : V_0 \to V_0^*$ and take its induced determinant map $\det(Q_0) : \det(V_0) \to \det(V_0^*)$. Thus, $\det(Q_0)$ may be regarded as a section of $\det(V_0)^{-2}$.

\begin{Definition}
We say that $(V_0 , Q_0)$ is {\em regular} if $\det(Q_0)$ has only simple zeros.
\end{Definition}

\begin{Definition}\label{definition-det}
Let $L$ be a line bundle on $\Sigma$ of positive degree and $\delta $ a holomorphic section of $L^2$ with only simple zeros. We say that $(V_0 , Q_0)$ {\it has determinant $(L^* , \delta)$} if there is an isomorphism $\det(V_0) \cong L^*$ under which $\det(Q_0) = \delta$.
\end{Definition}

      \begin{Remark}One should note that   $U${\it-quadratic bundles} $(V_0,Q_0)$, given by a holomorphic vector bundle $V_0$ and $Q_0$ a global section of $Sym^2(V_0^*)\otimes U$ with $U$ a fixed line bundle on $\Sigma$, were considered in \cite{andre} to show that the space of representations into $SO_0(3,2)$ with fixed invariants is connected.  In contrast, in the present paper we are considering only the case of $U\cong\mathcal{O}$ and showing that this suffices to study the space of $SO(p+q,p)$ representations for any $p,q\in \mathbb{N}$.
  \end{Remark}
Given $L$ and $\delta$ as above,   define an associated spectral curve $\pi_C:C\rightarrow \Sigma$ in the total space of $L^2$ by:
\begin{equation}
C = \{ \zeta \; | \; \zeta^2 = \delta \} \subset Tot(L^2). \label{curve}
\end{equation}
 Since $\pi_C : C \to \Sigma$ is a double cover, we have a sheet swapping involution $\sigma_C : C \to C$.  
\begin{Definition}
A $\sigma_C$-equivariant (or simply {\it equivariant}) rank $q$ orthogonal bundle on the double cover $C$ is a triple $(M , Q_M , \tilde{\sigma}_C)$, where $(M , Q_M)$ is a rank $q$ orthogonal bundle on the double cover  $C$, for which there is a lift   $\tilde{\sigma}_C : M \to M$   of $\sigma_C$ to an involution on $M$ which preserves $Q_M$. 
\end{Definition}
Over a fixed point $r$ of $\sigma_C$ on $C$, i.e. a ramification point of $\pi_C$, the involution $\tilde{\sigma}_C$ acts on the fibre $M_r$ of an equivariant rank $q$ orthogonal bundle $(M , Q_M , \tilde{\sigma}_C)$ as an involutive isometry. So there is an orthogonal decomposition $M_r = M_r^+ \oplus M_r^-$ into the $\pm 1$-eigenspaces of $\tilde{\sigma}_C$. Let $q_+ = \dim(M_r^+)$ and $q_- = \dim(M_r^-)$, so $q = q_+ + q_-$. 

\begin{Definition}We say that an equivariant rank $q$ orthogonal bundle $(M , Q_M , \tilde{\sigma}_C)$ has {\em type} $(q_+ , q_-)$ at $r$, for $q_+ , q_-$ obtained as above.  We denote by $\mathcal{M}_C(q_+,q_-)$ the moduli stack of rank $q$ orthogonal bundles $(M , Q_M , \tilde{\sigma}_C)$ on $C$ which have type $(q_+ , q_-)$ over each fixed point. Stability conditions for such bundles will be considered in \autoref{sec:stability}. Note that $\mathcal{M}_C(q_+,q_-)$ is a smooth Artin stack, in fact it is an example of a moduli stack $Bun_{\mathcal{G}}(\Sigma)$ of $\mathcal{G}$-torsors, where $\mathcal{G}$ is a parahoric Bruhat-Tits group scheme on $\Sigma$ (see, \cite[Example (3)]{Hein}).
\end{Definition}

Given an equivariant rank $q$ orthogonal bundle $(M , Q_M , \tilde{\sigma}_C)$ in $\mathcal{M}_C(q_+,q_-)$, we define a rank $q$ quadratic bundle $(V_0 , Q_0)$ as follows:
\begin{itemize}
\item{$V_0$ is defined to be the invariant direct image of $M$, i.e. for each open set $U \subseteq \Sigma$,   set 
$$H^0(U, V_0)= H^0(\pi_C^{-1}(U), M)^{\tilde{\sigma}_C};$$}
\item{$Q_0$ is the restriction of $Q_M$ to $\tilde{\sigma}_C$-invariant sections.}
\end{itemize}

\begin{Proposition}\label{equivM}
There is a bijection between isomorphism classes of rank $q$ orthogonal bundles on $\Sigma$ with fixed determinant $(L^* , \delta)$ and equivariant rank $q$ orthogonal bundles on the spectral curve $C$ associated to $(L , \delta)$, with type $(q-1 , 1)$ over each ramification point. The correspondence is given by taking invariant direct image as described above.
\end{Proposition}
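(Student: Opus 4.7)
The plan is to construct the bijection explicitly in both directions and verify that the two constructions are mutually inverse. The forward map is already described in the statement, sending $(M, Q_M, \tilde\sigma_C) \mapsto (V_0, Q_0)$ with $V_0 = (\pi_{C*}M)^{\tilde\sigma_C}$. For the inverse, I would recover $M$ from $(V_0, Q_0)$ as an elementary (Hecke) modification of $\pi_C^* V_0$ determined by the kernel of $Q_0$ over the branch locus. Both directions reduce to a local computation at each ramification point $r \in R$ lying over a branch point $x \in B = \{\delta = 0\}$, using coordinates $z$ on $C$ with $\pi_C(z) = t = z^2$ on $\Sigma$.

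For the forward direction, I would choose a local frame $e_1, \ldots, e_q$ of $M$ near $r$ that is $Q_M$-orthonormal and diagonalises $\tilde\sigma_C|_{M_r}$ as $\diag(1, \ldots, 1, -1)$, which is possible by the type $(q-1, 1)$ assumption. The $\tilde\sigma_C$-invariant sections of $\pi_{C*} M$ near $x$ are then the $\mathbb{C}[t]$-span of $e_1, \ldots, e_{q-1}, z e_q$, so $V_0$ is locally free of rank $q$ and $Q_0$ is represented by the matrix $\diag(1, \ldots, 1, t)$, yielding $\det Q_0 = \delta$ under a suitable identification of determinant line bundles. For $\det V_0 \cong L^*$, I would observe that the counit $\pi_C^* V_0 \to M$ is a generic isomorphism with rank $q-1$ at each $r \in R$, fitting into a short exact sequence $0 \to \pi_C^* V_0 \to M \to \mathcal{O}_R \to 0$. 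Taking determinants and using $\mathcal{O}(R) = \pi_C^* L$ gives $\pi_C^*(\det V_0 \otimes L) \cong \det M$; combined with $\det M \cong \mathcal{O}_C$ and the injectivity of $\pi_C^*$ on $\mathrm{Pic}(\Sigma)$ for a ramified double cover with $\deg L > 0$ (proved by a standard section-counting argument: a trivialisation of $\pi_C^*N$ must be $\sigma_C$-invariant since $H^0(N \otimes L^{-1}) = 0$ when $N$ is 2-torsion and $\deg L > 0$), this forces $\det V_0 \cong L^*$.

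For the inverse, given $(V_0, Q_0)$ with determinant $(L^*, \delta)$, the pulled-back form $\pi_C^* Q_0$ has determinant $\pi_C^* \delta = \zeta^2$, where $\zeta$ is the tautological section of $\pi_C^* L$. At each $r \in R$ the radical of $\pi_C^* Q_0|_r$ is the $1$-dimensional line $K_r = \ker(Q_0|_{\pi_C(r)}) \subset \pi_C^* V_0|_r$. I would define $M$ as the Hecke upward modification of $\pi_C^* V_0$ at each $r$ in the direction $K_r$; locally this replaces the basis $e_1, \ldots, e_{q-1}, z e_q$ of $\pi_C^* V_0$ by the basis $e_1, \ldots, e_q$ of $M$, characterised globally by $0 \to \pi_C^* V_0 \to M \to \mathcal{O}_R \to 0$ together with the requirement that $\pi_C^* Q_0$ factors through $M$ as a non-degenerate form $Q_M$. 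The fact that this factorisation exists is precisely the content of $\det \pi_C^* Q_0 = \zeta^2$ being a perfect square. The lift $\tilde\sigma_C$ of the deck involution $\sigma_C$ to $M$ extends the natural one on $\pi_C^* V_0$; the extra generator of $M / \pi_C^* V_0$ at each $r$ lies in the $(-1)$-eigenspace, since it arises from dividing the $\sigma_C$-invariant local section $z e_q$ by the anti-invariant $\zeta$, producing type exactly $(q-1, 1)$. The two constructions are then mutually inverse by direct matching of the local models at each ramification point.

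The main obstacle is making the backward Hecke construction global and canonical, together with verifying that the induced form $Q_M$ is non-degenerate, $\tilde\sigma_C$-invariant, and recovers $Q_0$ on $\tilde\sigma_C$-invariants. A related subtlety lies in the determinant matching: obtaining $\det V_0 \cong L^*$ on the nose forces $\det M \cong \mathcal{O}_C$, so one either implicitly treats the objects on $C$ as $SO(q)$-bundles or must carefully track a possible $2$-torsion discrepancy between $\det V_0$ and $L^*$ corresponding to a nontrivial $\det M$.
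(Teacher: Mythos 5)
Your proof is correct and follows essentially the same route as the paper: a local computation at each ramification point identifying the invariant direct image in an equivariant orthonormal frame, together with an elementary modification of the pullback to invert it (the paper phrases the inverse as the subsheaf of $\pi_C^*(V_0^*)$ killed by a skyscraper sheaf supported on the radical lines, equipped with the restriction of $Q_0^{-1}$ --- the same bundle as your upward Hecke modification of $\pi_C^* V_0$ under the identification induced by $\pi_C^* Q_0$). Your determinant argument via $0 \to \pi_C^* V_0 \to M \to \mathcal{O}_R \to 0$ and injectivity of $\pi_C^*$ on $\mathrm{Pic}(\Sigma)$ is in fact more careful than the paper's, which only matches the divisors of $\det(Q_0)$ and $\delta$; the $2$-torsion ambiguity you flag is genuine but is resolved by the orientation/trivial-determinant normalisation of $M$ that the paper imposes in the subsequent section on determinant bundles.
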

\begin{proof}
Let $(M , Q_M , \tilde{\sigma}_C)$ be an equivariant rank $q$ orthogonal bundle on a spectral curve $C$ given as in Eq.~\eqref{curve} with type $(q-1,1)$ over each ramification point. Let $(V_0 , Q_0)$ be the quadratic bundle on $\Sigma$ given by taking invariant direct image. We must show that $(V_0 , Q_0)$ has determinant $(L^* , \delta)$. For this we just need to show that $\det(Q_0)$ and $\delta$ have the same divisor. Note that the divisor of $\delta$ is exactly the branch locus of $C \to \Sigma$. 

Clearly $Q_0$ is non-degenerate away from the branch points. Consider then a branch point  $x \in \Sigma$  and the corresponding ramification point $r \in C$. Since $\delta$ vanishes to first order at $x$, we can choose a local trivialisation of $L$ and a local coordinate $z$ centered at $x$ such that $\delta(z) = z$.
The tautological section $\zeta$ on $C$ can then be viewed as a local coordinate on $C$ centered at $r$ and satisfying $\zeta^2 = z$. The map $\pi_C$ is given locally by $\pi_C(\zeta) = \zeta^2 = z$ and $\sigma_C$ is given by $\sigma_C(\zeta) = -\zeta$. Let $e_1 , \dots , e_q$ be a local orthonormal frame for $M$. Since $M$ has type $(q-1 , 1)$ at $r$, we can choose $e_1, \dots , e_q$ so that $\tilde{\sigma}_C(e_1) = -e_1$, $\tilde{\sigma}_C(e_j) = e_j$ for $j > 1$. A local frame for the invariant direct image is therefore given by $e'_1 = \zeta e_1,$ and  $e'_2 = e_2 , \dots , e'_q = e_q$. Note that $Q_0( e'_1 , e'_1) = Q_M( \zeta e_1 , \zeta e_1 ) = \zeta^2 = z$ and $Q_0( e'_i , e'_j) = \delta_{ij}$ for $(i,j) \neq (1,1)$. In particular this shows that $\det(Q_0)$ vanishes to first order at $x$, as required.

Conversely, let $(V_0 , Q_0)$ be a quadratic bundle where the divisor of $\det(Q_0)$ is exactly the branch locus of $C \to \Sigma$, and let $x \in \Sigma$ be a branch point. As $\det(Q_0)$ vanishes only to first order at $x$, it follows that $Q_0|_{(V_0)_x}$ has a $1$-dimensional null space $N_x \subseteq (V_0)_x$. Let $\mathcal{N}$ be the sheaf on $C$ consisting of a direct sum of skyscraper sheaves, where for each ramification point $p \in C$, we take the skyscraper sheaf with fibre $N_x^*$, for $x = \pi_C(r)$, located at $r$. 

Then, one may define a vector bundle $M$ on $C$ by the following exact sequence of sheaves
\begin{equation*}
0 \to \mathcal{O}(M) \to \mathcal{O}( \pi_C^*(V_0^*)) \to \mathcal{N} \to 0,
\end{equation*}
where the map $\mathcal{O}( \pi_C^*(V_0^*)) \to \mathcal{N}$ is the direct sum of the maps $(V_0)_x^* \to N_x^*$ dual to the inclusions $N_x \to (V_0)_x$. Around a branch point $x$ we can choose an orthonormal frame $e'_1 , e'_2 , \dots , e'_q$ for $V_0$ where $Q_0(e'_1 , e'_1 ) = z$, and $Q_0(e'_i , e'_j ) = \delta_{ij}$, for $(i,j) \neq (1,1)$. Let $f'_1 , \dots , f'_q$ be the dual frame. Note that $Q_0^{-1}$ defines a singular bilinear form on $V_0^*$ with $Q^{-1}_0( f'_1 , f'_1) = 1/z$, $Q^{-1}_0( f'_i , f'_j) = \delta_{ij}$, for $(i,j) \neq (1,1)$. Then a local frame for $M$ is given by $f_1 = \zeta f'_1,$ and $f_2 = f'_2 , \dots , f_q = f'_q$. 

The restriction  $Q_M$ of $Q^{-1}_0$ to $M$    defines an orthogonal structure on $M$. Then, for  $\tilde{\sigma}_C$ be the restriction to $M$ of the canonical involution on $\pi_C^*(V_0^*)$, we have that $$\tilde{\sigma}_C( f_1 ) = \tilde{\sigma}_C( \zeta f'_1 ) = \sigma_C^*(\zeta) f'_1 = -\zeta f'_1 = -f_1$$ and $\tilde{\sigma}_C( f_j ) = f_j$ for $j > 1$. So $(M , Q_M , \tilde{\sigma}_C )$ has type $(q-1 , 1)$ at each ramification point. Lastly note that these two constructions are inverse to one another so we obtain the desired bijection.
\end{proof}

\begin{Proposition}
  The moduli stack $\mathcal{M}_{C}(q-1,1)$ has dimension $\dim(\mathfrak{so}(q) )(g-1) + (q-1) deg(L)$.\label{Lef}
\end{Proposition}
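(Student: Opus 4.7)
The plan is to compute $\dim \mathcal{M}_C(q-1,1)$ as the negative Euler characteristic of the deformation sheaf on $\Sigma$, using the quadratic-bundle description of \autoref{equivM}. Since $\mathcal{M}_C(q-1,1)$ is a smooth Artin stack, its dimension at $(M,Q_M,\tilde\sigma_C)$ equals $h^1-h^0$ of the deformation/automorphism complex, which for an equivariant orthogonal bundle is controlled by the equivariant adjoint bundle $\mathfrak{so}(M)$. Setting $\mathcal{E}^+ := (\pi_{C,*}\mathfrak{so}(M))^{\tilde\sigma_C}$, the identification $H^i(C,\mathfrak{so}(M))^{\tilde\sigma_C}=H^i(\Sigma,\mathcal{E}^+)$ gives
\[
\dim \mathcal{M}_C(q-1,1) = -\chi(\Sigma,\mathcal{E}^+),
\]
so the task reduces to computing this Euler characteristic by Riemann--Roch on $\Sigma$.

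First I would identify $\mathcal{E}^+$ as an explicit coherent subsheaf of $\End(V_0)$, using the correspondence of \autoref{equivM}. In the adapted local frame $e_1,\ldots,e_q$ of $M$ near a ramification point $r$ (with $\tilde\sigma_C e_1 = -e_1$ and $\tilde\sigma_C e_j = e_j$ for $j>1$) and the induced frame $e'_1=\zeta e_1$, $e'_j=e_j$ ($j>1$) of $V_0$ used in the proof of \autoref{equivM}, a direct calculation of $\tilde\sigma_C$-invariant sections of $\mathfrak{so}(M)$ shows that $\mathcal{E}^+$ is locally free of rank $\dim\mathfrak{so}(q)$ and coincides with the kernel of the bundle map
\[
\mu : \End(V_0) \longrightarrow \mathrm{Sym}^2(V_0^*), \qquad \mu(A) = Q_0\circ A + A^*\circ Q_0,
\]
where $Q_0$ is viewed as a map $V_0\to V_0^*$ and $A^*$ is its dual.

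With the identification $\mathcal{E}^+ = \ker(\mu)$ in hand, the degree computation is routine. In the local $e'$-frame one has $Q_0=\diag(z,1,\ldots,1)$, and a short calculation shows the image of $\mu$ at a branch point $x$ is generated by $z\cdot(e'_1\otimes e'_1)$ together with all $e'_i\otimes e'_j$ for $(i,j)\neq(1,1)$; hence $\mathrm{coker}(\mu)$ is a skyscraper of length one at $x$. Since the branch divisor has degree $2\deg(L)$, we obtain $\mathrm{length}(\mathrm{coker}(\mu))=2\deg(L)$. Combined with $\deg(\End(V_0))=0$ and $\deg(\mathrm{Sym}^2 V_0^*)=(q+1)\deg(L)$ (using $\det V_0 = L^*$), the four term exact sequence
\[
0 \to \mathcal{E}^+ \to \End(V_0) \xrightarrow{\mu} \mathrm{Sym}^2(V_0^*) \to \mathrm{coker}(\mu) \to 0
\]
yields $\deg\mathcal{E}^+ = -(q-1)\deg(L)$, and Riemann--Roch on $\Sigma$ gives $\chi(\Sigma,\mathcal{E}^+) = \dim\mathfrak{so}(q)(1-g) - (q-1)\deg(L)$, which is the claimed formula.

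The main obstacle is the local identification $\mathcal{E}^+ = \ker(\mu)$ at branch points: one must carefully match the $\tilde\sigma_C$-invariant sections of $\mathfrak{so}(M)$ written in the orthonormal $e$-frame on $C$ against the $Q_0$-skew-adjointness condition written in the $e'$-frame on $\Sigma$, where $Q_0$ is degenerate. Once this local calculation is in place, the rest is Riemann--Roch bookkeeping.
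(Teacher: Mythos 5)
Your proof is correct, and it takes a genuinely different route from the paper's. The paper works upstairs on $C$: it invokes the holomorphic Lefschetz fixed point formula of Atiyah--Bott to compute the $\tilde{\sigma}_C$-invariant part of $\chi(C,ad(M))$, evaluating the trace of $\tilde{\sigma}_C$ on $ad(M)_r$ at each ramification point directly from the type $(q-1,1)$ condition, and then feeds in Riemann--Hurwitz. You instead push everything down to $\Sigma$, identify the invariant deformation sheaf $\mathcal{E}^+=(\pi_{C,*}\mathfrak{so}(M))^{\tilde{\sigma}_C}$ with the sheaf of $Q_0$-skew endomorphisms of the degenerate quadratic bundle $(V_0,Q_0)$, i.e.\ with $\ker\bigl(\mu:\End(V_0)\to \mathrm{Sym}^2(V_0^*)\bigr)$, and extract its degree from the four-term exact sequence before applying ordinary Riemann--Roch. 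I checked your local computation in the adapted frames of \autoref{equivM}: invariance forces the first row of the antisymmetric matrix to be odd in $\zeta$ and the lower block to be even, which under the frame change $e_1'=\zeta e_1$ becomes exactly the conditions $a_{11}=0$, $a_{j1}=-z\,a_{1j}$, $a_{ij}=-a_{ji}$ cut out by $\mu$, of rank $\dim\mathfrak{so}(q)$; the cokernel of $\mu$ is indeed a length-one skyscraper at each of the $2\deg(L)$ branch points, giving $\deg\mathcal{E}^+=-(q-1)\deg(L)$ and the stated dimension. The trade-off: the paper's argument is shorter and avoids any local frame analysis but relies on equivariant index theory, whereas yours is more elementary and has the side benefit of describing the deformation theory of $\mathcal{M}_C(q-1,1)$ intrinsically on $\Sigma$ in terms of the quadratic bundle $(V_0,Q_0)$, at the cost of the careful frame-matching at branch points (essentially the same computation already carried out in the proof of \autoref{equivM}).
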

\begin{proof}
Let $ad(M) \cong \wedge^2 M$ denote the adjoint bundle of $M$. By a standard argument in deformation theory, the dimension of the stack $\mathcal{M}_{C}(q-1,1)$ equals $-\chi^{\tilde{\sigma}_C}$, where $\chi^{\tilde{\sigma}_C}$ is the $\tilde{\sigma}_C$-invariant part of the index:
\begin{equation*}
\chi^{\tilde{\sigma}_C} := \dim( H^0( C , ad(M) )^{\tilde{\sigma}_C} ) - \dim( H^1( C , ad(M) )^{\tilde{\sigma}_C} ).
\end{equation*}
By the Lefschetz index formula (see \cite[Theorem 4.12]{Lef}), we have:
\begin{equation*}
-\chi^{\tilde{\sigma}_C} = \frac{1}{2}\dim( ad(M) )(g_C - 1) - \frac{1}{4} \sum_p tr( \tilde{\sigma}_C : ad(M)_p \to ad(M)_p ),
\end{equation*}
where $g_C$ is the genus of $C$ and the sum is over the ramification points of $C \to \Sigma$. Now since $M$ has type $(q-1 , 1)$ at each ramification point, we see that $$tr( \tilde{\sigma}_C : ad(M)_p \to ad(M)_p ) = \dim(ad(M))_p - 2(q-1) = \dim( \mathfrak{so}(q)) - 2(q-1).$$ Since $\delta$ is a section of $L^2$, letting $d=\deg(L)$, the cover $C \to \Sigma$ has $2d$ ramification points and hence:
\begin{equation*}
\begin{aligned}
-\chi^{\tilde{\sigma}_C} &= \frac{1}{2}\dim( \mathfrak{so}(q) )(g_C - 1) - \frac{d}{2}\left( \dim(\mathfrak{so}(q)) - 2(q-1) \right) \\
&= \frac{1}{2} \dim( \mathfrak{so}(q) ) \left( (g_C-1) - d \right) + (q-1)d.
\end{aligned}
\end{equation*}
By Riemann-Hurwitz, we have $(g_C - 1) - d = 2(g-1)$, so
\begin{equation*}
\chi^{\tilde{\sigma}_C} = \dim( \mathfrak{so}(q) )(g-1) + (q-1)d,
\end{equation*}
which concludes the proof. 
\end{proof}

 \subsection{Quadratic bundles, Higgs bundles and  the auxiliary spectral curve}\label{quad1}
From \autoref{section2} an $SO(p+q,p)$-Higgs bundle $(V , W , \beta)$ satisfying   \autoref{ass1} has a naturally associated rank $q$ quadratic bundle $(V_0,Q_0)$ defined as follows:
\begin{itemize}
\item{$V_0 = Ker( \beta^t) \subset V$ defined as in Eq.~\eqref{W1},}
\item{$Q_0$ is the restriction of $Q_V$ to $V_0$.}
\end{itemize}
 
\begin{Lemma} The symmetric bilinear form $Q_0$ is non-degenerate at points where $a_{p} \neq 0$ and has a $1$-dimensional null space at zeros of $a_{p}$.\end{Lemma}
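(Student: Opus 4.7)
The plan is to identify the radical of $Q_0$ at a point $x\in\Sigma$ with $V_0|_x\cap \im(\beta)|_x$ and to measure the latter via the kernel of $\gamma\beta$. First I would observe that the compatibility $Q_V(v,\beta w)=Q_W(\gamma v,w)$, which encodes $\gamma=\beta^t$, immediately yields $V_0\subseteq \im(\beta)^\perp$; equivalently, $\im(\beta)\subseteq V_0^\perp$. To upgrade this inclusion to an equality one needs $\im(\beta)$ to be a rank-$p$ subbundle of $V$. Since $\gamma$ factors as $V\twoheadrightarrow V_1\xrightarrow{\gamma_+}W\otimes K$ and $\gamma_+$ is an isomorphism by \autoref{gammaisom}, $\gamma$ is surjective at every point; the orthogonal duality $\gamma=\beta^t$ then forces $\beta$ to be injective at every point. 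Consequently $\im(\beta)$ has rank $p$ everywhere, and $V_0^\perp=\im(\beta)$ as subbundles of $V$.

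With this identification in hand, the radical of $Q_0$ at $x$ equals $V_0|_x\cap \im(\beta)|_x$, which by the injectivity of $\beta$ is carried isomorphically to $\ker(\gamma\beta)|_x$. Thus the proof reduces to a pointwise rank computation for the $K^2$-twisted endomorphism $\gamma\beta:W\to W\otimes K^2$. Applying the Sylvester identity
\[
\det(\xi I_{p+q}-\beta\gamma)=\xi^q\det(\xi I_p-\gamma\beta)
\]
to $\Phi^2=\begin{pmatrix}\beta\gamma&0\\0&\gamma\beta\end{pmatrix}$ and combining with the factorization $\det(\eta I-\Phi)=\eta^q(\eta^{2p}+a_1\eta^{2p-2}+\cdots+a_p)$, one finds $\det(\xi I_p-\gamma\beta)=\xi^p+a_1\xi^{p-1}+\cdots+a_p$, so $\det(\gamma\beta)=(-1)^p a_p$. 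Therefore, at points with $a_p(x)\neq 0$ the map $(\gamma\beta)_x$ is invertible and the radical of $Q_0$ is trivial, whereas at zeros of $a_p$ \autoref{ass1} guarantees the zero is simple, and \autoref{1hola} applied to the $K^2$-twisted pair $(W,\gamma\beta)$ yields $\dim\ker(\gamma\beta)|_x=1$; this transports through $\beta_x$ to a $1$-dimensional radical of $Q_0$.

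The only delicate point is the identification $V_0^\perp=\im(\beta)$: it hinges on the pointwise injectivity of $\beta$ at zeros of $a_p$, which would fail if $\gamma$ dropped rank there. \autoref{gammaisom} rules this out, after which the argument is clean linear algebra combined with \autoref{1hola}.
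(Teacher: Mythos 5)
Your argument is correct and follows essentially the same route as the paper: both identify the null space of $Q_0$ at $x$ with $(V_0)_x \cap (V_0^\perp)_x$ and measure it by the kernel of the induced map on $W$ (you use $\gamma\beta=\beta_F$ with the Sylvester identity and \autoref{1hola}; the paper uses the equivalent fact, established in the proof of \autoref{gammaisom}, that $\beta_+$ is an isomorphism away from the zeros of $a_p$ and has one-dimensional kernel at each simple zero). The only cosmetic difference is that you realise $V_0^\perp$ as $\im(\beta)$ (suitably twisted by $K^{-1}$) rather than as $V_1^*$, which changes nothing of substance.
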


\begin{proof}
 To see this, recall that we have the short exact sequence 
$0 \to V_0 \to V \to V_1 \to 0$ 
and the corresponding dual sequence: 
$0 \to V_1^* \to V \to V_0^* \to 0.$ 
Note also that $V_1^*$ as a subbundle of $V$ can be identified with $V_0^\perp$, the annihilator of $V_0$. Recall that $\beta_+$ viewed as a map $\beta_+ : W \otimes K^{-1} \to V_1$ is given by the composition 
\begin{equation*}\xymatrix{
W \otimes K^{-1} \ar[r]^-{\gamma_+^t} & V_1^* = V_0^\perp \ar[r] & V \ar[r] & V_1 = V/V_0.
}
\end{equation*}
From \autoref{gammaisom} the map $\gamma_+^t$ is everywhere an isomorphism. Moreover, away from the zeros of $a_{p}$ we have that $\beta_+$ is an isomorphism, hence $V_0^\perp \cap V_0 = \{ 0 \}$. Therefore $V_0$ is a non-degenerate subspace of $V$ and hence $Q_0 = Q_V|_{V_0}$ is non-degenerate. If $x$ is a zero of $a_{p}$, then $\beta_+$ has a $1$-dimensional kernel, and hence  $(V_0)^\perp_x \cap (V_0)_x$ is $1$-dimensional. Then,  $Q_0$  at $x$ has a $1$-dimensional null space $N_x = (V_0)^\perp_x \cap (V_0)_x \subseteq (V_0)_x$ as required.\end{proof}

\begin{Lemma}\label{lem:determinants}
For an $O(p+q,p)$-Higgs bundle $(V,W,\beta)$, the condition $\det(V) \cong \det(W)$ is equivalent to requiring $\det(V_0) \cong K^{-p}$.\label{detV0}
\end{Lemma}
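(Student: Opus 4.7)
The plan is to reduce the determinant condition on the rank $p+q$ bundle $V$ to a statement about $V_0$ alone, by peeling off the quotient piece $V_1 = V/V_0$ which is pinned down by the earlier Cayley-type analysis.

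First, I would use the short exact sequence
\[
0 \to V_0 \to V \to V_1 \to 0
\]
established in \autoref{E+}, whose associated determinant identity gives $\det(V) \cong \det(V_0) \otimes \det(V_1)$.

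Next, the key input is \autoref{gammaisom}, which asserts that the map $\gamma_+ : V_1 \to W \otimes K$ is an isomorphism of vector bundles of rank $p$. Taking determinants, this yields
\[
\det(V_1) \cong \det(W \otimes K) \cong \det(W) \otimes K^p.
\]
Combining with the previous identity produces
\[
\det(V) \cong \det(V_0) \otimes \det(W) \otimes K^p.
\]
From here, $\det(V) \cong \det(W)$ is equivalent to $\det(V_0) \otimes K^p \cong \mathcal{O}$, i.e.\ $\det(V_0) \cong K^{-p}$, which is the claimed equivalence.

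There is no substantive obstacle in this argument; everything rests on the earlier assertion that $\gamma_+$ is an isomorphism (which uses \autoref{ass1} through \autoref{1hola}) together with the basic behavior of determinants in short exact sequences. The only minor point worth being explicit about is that, since we are working with $O(p+q,p)$ rather than $SO(p+q,p)$, the identification $\det(V) \cong \det(W)$ is to be read as an isomorphism of line bundles; the argument above manifestly respects this interpretation and would also work verbatim at the level of $\mathbb{Z}_2$-line bundle trivializations should one wish to upgrade to the $SO$ case.
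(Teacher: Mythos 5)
Your argument is correct and is essentially the paper's own proof: both use the determinant of the exact sequence $0 \to V_0 \to V \to V_1 \to 0$ together with the isomorphism $V_1 \cong W \otimes K$ from \autoref{gammaisom} to get $\det(V) \cong \det(V_0) \otimes \det(W) \otimes K^p$ and then cancel $\det(W)$. No gaps.
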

\begin{proof}The above argument shows that $\det(V_0^*)^2 \cong K^{2p}$. Using this and the isomorphism $V_1 \cong W \otimes K$, it follows that \[\det(V) \cong \det(V_0)\otimes\det(V_1)
\cong \det(V_0)\otimes \det( W \otimes K) 
\cong \det(V_0) \otimes \det( W ) \otimes K^p.\] 
Hence $\det(V) \cong \det(W)$ is equivalent to requiring $\det(V_0) \cong K^{-p}$.\end{proof}

From the above analysis, we have the following correspondence between Higgs bundles, quadratic bundles, and orthogonal bundles. 
\begin{Theorem}\label{qtheorem}For each $SO(p+q,p)$-Higgs bundle $(V , W , \beta)$ there is a rank $q$ quadratic bundle $( V_0 , Q_0) = (Ker(\beta^t) , Q_V|_{Ker(\beta^t)})$ which has determinant $( K^{-p} , a_{p})$. Letting $\pi_C : C \to \Sigma$ denote the corresponding  curve  $\zeta^2 = a_{p}$, the quadratic bundle $(V_0 , Q_0 )$ is given as the invariant direct image of an equivariant orthogonal bundle $(M , Q_M , \tilde{\sigma}_C ) \in  \mathcal{M}_C(q-1,1)$.
\end{Theorem}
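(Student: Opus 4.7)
The plan is to assemble the statement from the structural lemmas proved immediately before in \autoref{quad1}, together with \autoref{equivM}. The argument splits naturally into three steps: verifying that $(V_0, Q_0)$ is a rank $q$ quadratic bundle, identifying its determinant data with $(K^{-p}, a_p)$, and then invoking the general correspondence between quadratic bundles on $\Sigma$ and equivariant orthogonal bundles on a double cover.

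First, I would check that $V_0 = \ker(\gamma)$ (with $\gamma = \beta^t$) is a rank $q$ subbundle of $V$. Under \autoref{ass1}, $a_p$ is not identically zero and so $\Phi$ generically has rank $2p$; hence $\gamma : V \to W \otimes K$ generically has rank $p$, giving generic rank $q$ for the kernel sheaf. The local computation carried out in the preceding lemma shows that even over the zeros of $a_p$ the intersection $(V_0)_x^{\perp} \cap (V_0)_x$ is only $1$-dimensional, which forces the rank of $V_0$ to remain equal to $q$ over such points; thus $V_0$ is a genuine subbundle. Setting $Q_0 := Q_V|_{V_0}$ then produces the desired rank $q$ quadratic bundle.

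Second, I would identify the determinant data. By \autoref{detV0}, the condition $\det(V) \cong \det(W)$ built into the $SO(p+q,p)$-Higgs bundle structure yields an isomorphism $\det(V_0) \cong K^{-p}$, so $\det(Q_0)$ is naturally viewed as a holomorphic section of $\det(V_0^*)^{\otimes 2} \cong K^{2p}$. The preceding lemma shows $\det(Q_0)$ vanishes precisely along the zero locus of $a_p$, which by \autoref{ass1} consists of simple zeros. Hence the divisors of $\det(Q_0)$ and $a_p$ on $\Sigma$ coincide, and since they are sections of the same line bundle $K^{2p}$, they agree up to a nonzero scalar which can be absorbed into the isomorphism $\det(V_0) \cong K^{-p}$. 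Thus $(V_0, Q_0)$ has determinant $(K^{-p}, a_p)$ in the sense of \autoref{definition-det}.

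Finally, identifying $(L, \delta) = (K^p, a_p)$, the double cover constructed in \eqref{curve} is precisely $\pi_C : C = \{\zeta^2 = a_p\} \to \Sigma$. Applying \autoref{equivM}, there exists a (unique up to isomorphism) equivariant rank $q$ orthogonal bundle $(M, Q_M, \tilde{\sigma}_C)$ on $C$ of type $(q-1,1)$ over every ramification point such that $(V_0, Q_0)$ is its invariant direct image, completing the proof. The only genuinely non-formal input is step two, the matching of the divisors of $\det(Q_0)$ and $a_p$; but this is already essentially computed in the explicit local frame produced during the proof of the preceding lemma on the rank of the null space of $Q_0$, so no new obstacles arise.
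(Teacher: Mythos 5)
Your proposal is correct and follows essentially the same route as the paper, which presents \autoref{qtheorem} as a direct assembly of the preceding results (the lemma on the null space of $Q_0$, \autoref{detV0}, and \autoref{equivM}) rather than giving a separate proof. The only cosmetic quibble is your justification that $V_0$ is a subbundle: this holds simply because the kernel sheaf of a morphism of vector bundles over a smooth curve is locally free and saturated (as the paper notes when defining $V_0$), not because of the dimension count on the null space of $Q_0$; everything else matches the paper's argument.
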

\begin{Remark}
Recall that to define an $SO(p+q,p)$-Higgs bundle it is not enough just to have an isomorphism $\det(V) \cong \det(W)$. We must actually fix a choice of isomorphism of $\mathbb{Z}_2$-line bundles. So far we have not identified what this choice corresponds to in terms of the equivariant bundle $(M , Q_M , \tilde{\sigma}_C)$, but we will return to this point later.
\end{Remark}

  \section{Abelian and Non-Abelian data for\\ orthogonal Higgs bundles}\label{section4}
  
  We have described in previous sections how to associate to an $SO(p+q,p)$-Higgs bundle $(V,W, Q_V , Q_W , \beta)$, a $K^2$-twisted $GL(p,\mathbb{R})$-Higgs bundle $(W , Q_W , \beta_F)$ via a Cayley type correspondence, as well as a quadratic bundle $(V_0 , Q_0)$, as part of the Langlands type correspondence. To complete the Langlands correspondence, we need to understand the extension data required to reconstruct $(V , Q_V)$ from $(W , Q_W, \beta_F)$ and $(V_0 , Q_0)$. Finally, we also consider the relation between stability conditions for the $SO(p+q,p)$-Higgs bundle and the quadratic bundle.     
  
  \subsection{The extension data} We first look into how a rank $p+q$ vector bundle $V$ and its orthogonal structure $Q_V$ can be recovered from a $K^2$-twisted $GL(p , \mathbb{R})$-bundle $(W , Q_W , \beta_F)$ and a quadratic bundle $(V_0 , Q_0)$ as in \autoref{qtheorem}. In particular, recall from \autoref{uni} that $V_{0}={\rm ker}(\beta^t)$ and $V_1=V/V_0$, so we have the short exact sequence:
$0 \to V_0 \to V \to V_1 \to 0$. 
Let $D \subset \Sigma$ denote the zero divisor of $a_{p}$. Then since $V_0$ is non-degenerate on $\Sigma \setminus D$, we obtain an orthogonal splitting:
\begin{equation}\label{equ:splitv}
 V \cong V_0^\perp \oplus V_0 
\cong V_1^* \oplus V_0 
 \cong (W \otimes K^{-1}) \oplus V_0.
 \end{equation}

\begin{Lemma}\label{lem1}
Let $(V,W,\beta)$ be an $SO(p+q,p)$-bundle. Then with respect to the splitting \eqref{equ:splitv} on $\Sigma \setminus D$, the orthogonal structure on $V$ is given by 
\begin{equation}\label{equ:qv}
Q_V( (a , b) , (c , d) ) = Q_W( \beta_F a , c) + Q_0(b , d).
\end{equation}
Conversely, given  $(V_0 , Q_0)$ in and a  $K^2$-twisted $GL(p , \mathbb{R})$ Higgs bundle $(W , Q_W , \beta_F)$, Equation  \eqref{equ:qv} defines an orthogonal structure on $(W \otimes K^{-1}) \oplus V_0$ over $\Sigma \setminus D$.
\end{Lemma}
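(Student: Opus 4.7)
The plan is to exploit the orthogonal splitting $V = V_0^\perp \oplus V_0$ available on $\Sigma \setminus D$ (where $Q_0$ is non-degenerate by the lemma just proved) and track how each summand maps into $(W \otimes K^{-1}) \oplus V_0$. The $V_0$-block immediately delivers $Q_0(b,d)$ and orthogonality of the splitting kills the cross terms, so the real content lies in computing $Q_V|_{V_0^\perp}$ in terms of $\beta_F$.

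To that end, I would trace two identifications: first, $V_0^\perp \cong V_1^*$ induced by $Q_V$ (sending $v \in V_0^\perp$ to the functional $\bar u \mapsto Q_V(v,u)$ on $V_1 = V/V_0$); second, $V_1^* \cong W \otimes K^{-1}$ via $\gamma_+^t$, which is an isomorphism by \autoref{gammaisom}. Writing $v_a \in V_0^\perp$ for the element corresponding to $a \in W \otimes K^{-1}$, this means $Q_V(v_a, u) = \langle \gamma_+(\bar u), a\rangle_{Q_W}$ for every $u \in V$. Setting $u = v_c$ reduces the desired formula to identifying $\gamma_+(\overline{v_c}) \in W \otimes K$.

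This is where I would use the factorizations from \autoref{uni}: since $\gamma = \gamma_+ \circ \pi$ with $\pi : V \to V_1$ the quotient, $\gamma_+(\overline{v_c}) = \gamma(v_c)$; but diagram \eqref{D4} identifies the restriction of $\gamma$ to $V_0^\perp = V_1^*$ with $\beta_+^t$, giving $\gamma(v_c) = \beta_+^t(\gamma_+^t(c)) = (\gamma_+ \beta_+)^t(c) = \beta_F^t(c)$. The self-adjointness $\beta_F = \beta_F^t$ with respect to $Q_W$, already established in the proof of \autoref{sim}, collapses this to $\beta_F(c)$, and combining with symmetry of $Q_W$ yields $Q_V(v_a, v_c) = Q_W(a, \beta_F c) = Q_W(\beta_F a, c)$, as claimed.

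For the converse, the bilinear form \eqref{equ:qv} is visibly symmetric (self-adjointness of $\beta_F$ together with the symmetries of $Q_W$ and $Q_0$) and non-degenerate on $\Sigma \setminus D$, since there $Q_0$ is non-degenerate and $\beta_F = \gamma_+ \circ \beta_+$ is an isomorphism of rank $p$ bundles ($\gamma_+$ is a global isomorphism by \autoref{gammaisom}, and $\beta_+$ is injective wherever $a_p \neq 0$ by the kernel analysis of \autoref{1hola}). I expect the only real obstacle to be careful bookkeeping of the twists by powers of $K$ --- in particular, consistently interpreting $\beta_F$ as a map $W \otimes K^{-1} \to W \otimes K$ rather than $W \to W \otimes K^2$ --- which is routine once the identifications above are fixed.
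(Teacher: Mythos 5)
Your proposal is correct and follows essentially the same route as the paper: both decompose $Q_V$ into its blocks on $V_0$ and on $V_0^\perp \cong V_1^* \cong W \otimes K^{-1}$ using orthogonality of the splitting, identify the $V_1^*$-block with $\beta_F$ via the diagrams \eqref{D1}--\eqref{D4}, and prove the converse from the self-adjointness of $\beta_F$ together with $\det(\beta_F) = (-1)^p a_p \neq 0$ off $D$. Your computation $\gamma|_{V_0^\perp} = \beta_+^t \circ \gamma_+^t = \beta_F^t = \beta_F$ is simply a careful unwinding of the step the paper states in one line (``from \eqref{phif} one finds that the map $V_1^* \to V_1$ in \eqref{D4} is given by $\beta_F$'').
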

\begin{proof} Suppose we are given an $SO(p+q,p)$-Higgs bundle $(V,W,\beta)$. The orthogonal structure $Q_V$ on $V$ is equivalent to giving a symmetric map $Q_V : V \to V^*$, i.e. a map $Q_V : (V_1^* \oplus V_0) \to (V_1 \oplus V_0^*)$. Since the direct sums are orthogonal, this is equivalent to giving maps $V_0 \to V_0^*$ and $V_1^* \to V_1$. Clearly the map $V_0 \to V_0^*$ is given by $Q_0$. From \eqref{phif} one finds that the map $V_1^* \cong W \otimes K^{-1} \to V_1 \cong W \otimes K$ in \eqref{D4} is given by $\beta_F : W \otimes K^{-1} \to W \otimes K$. In other words, with respect to the splitting $V \cong (W \otimes K^{-1}) \oplus V_0$, we have $Q_V( (a , b) , (c , d) ) = Q_W( \beta_F a , c) + Q_0(b , d)$. 

Conversely, suppose we are given $(V_0,Q_0)$ and $(W,Q_W , \beta_F)$. Since $\det(\beta_F) = (-1)^p a_p$, we have that $\beta_F$ is an isomorphism on $\Sigma \setminus D$. Moreover, $Q_0$ is non-degenerate on $\Sigma \setminus D$ and thus $Q_V$ is non-degenerate. Furthermore,  since $Q_W(\beta_F a , c) = Q_W( a , \beta_F c)$, the form $Q_V$ is symmetric.
\end{proof}

\begin{Lemma} \label{lem2} 
Let $(V,W,\beta)$ be an $SO(p+q,p)$-bundle. Then with respect to the orthogonal splitting $V \cong (W \otimes K^{-1}) \oplus V_0$ as in \eqref{equ:splitv}, we have that $\beta$ and $\gamma$ are given by
\begin{equation}\label{equ:betaandgamma}
\beta(u) = (u , 0), \quad \gamma(v,w)= \beta_F(v).
\end{equation}
Conversely, given a rank $q$ orthogonal bundle $(V_0 , Q_0)$ and a  $K^2$-twisted $GL(p , \mathbb{R})$ Higgs bundle $(W , Q_W , \beta_F)$, we obtain an induced  $SO(p+q,p)$-Higgs field on $V\oplus W$ on $\Sigma \setminus D$ where $(V , Q_V)$ is as in \autoref{lem1} and $\beta, \gamma$ are as in \eqref{equ:betaandgamma}.
\end{Lemma}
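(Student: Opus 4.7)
The plan is a direct diagram chase using the commutative diagrams \eqref{D1}--\eqref{D4} from \autoref{E+} together with the identifications $V_0^\perp \cong V_1^*$ (from the orthogonal pairing on $\Sigma \setminus D$) and $V_1^* \cong W \otimes K^{-1}$ (coming from $\gamma_+ \colon V_1 \xrightarrow{\sim} W \otimes K$ combined with $Q_W \colon W \cong W^*$). Under these identifications, $V_1^* \otimes K \cong W$, and I will show that $\beta$ and $\gamma$ acquire the stated block form.

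For the forward direction, I would first observe that $\beta$ has image in $V_0^\perp \otimes K$: for any $v \in V_0 = \ker(\gamma)$ and $u \in W$, one computes $Q_V(v, \beta(u)) = Q_W(\gamma(v), u) = 0$. Hence under the splitting $V \cong V_0^\perp \oplus V_0$ the $V_0$-component of $\beta(u)$ vanishes. The factorization $W \xrightarrow{\gamma_+^t} V_1^* \otimes K \hookrightarrow V \otimes K$ of \eqref{D2}, combined with the identification $V_1^* \otimes K \cong W$ induced by $\gamma_+^t$ itself, then yields $\beta(u) = (u, 0)$. For $\gamma$, the factorization of \eqref{D1} through $V \twoheadrightarrow V_1$ shows that $\gamma$ vanishes on $V_0$; on $V_0^\perp$, the projection to $V_1$ composed with $\gamma_+$ is precisely the map obtained by comparing \eqref{D3} with the factorization of $\beta$ through $V_1^* \otimes K$, which is $\gamma_+ \circ \beta_+ = \beta_F$ after applying the identification $V_0^\perp \cong W \otimes K^{-1}$. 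This gives $\gamma(v, w) = \beta_F(v)$.

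For the converse, define $V := (W \otimes K^{-1}) \oplus V_0$ on $\Sigma \setminus D$, equipped with the form $Q_V$ from \autoref{lem1} and the maps $\beta, \gamma$ from \eqref{equ:betaandgamma}. The only substantive condition to check is that $\gamma = \beta^t$ with respect to $Q_V$ and $Q_W$. This is an immediate computation:
\begin{equation*}
Q_V(\beta(u), (v,w)) = Q_V((u,0), (v,w)) = Q_W(\beta_F u, v) = Q_W(u, \beta_F v) = Q_W(u, \gamma(v,w)),
\end{equation*}
where the middle equality uses the defining symmetry $Q_W(\beta_F a, b) = Q_W(a, \beta_F b)$ of a $K^2$-twisted $GL(p,\mathbb{R})$-Higgs bundle. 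The non-degeneracy of $Q_V$ on $\Sigma \setminus D$ was already verified in \autoref{lem1}.

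The hard part is really not hard: the whole argument amounts to bookkeeping of natural identifications. The only point requiring care is ensuring that the identification $V_1^* \cong W \otimes K^{-1}$ employed in the orthogonal splitting is the same one that underlies the construction of $\beta_F$ in \autoref{sim}, so that the off-diagonal term of $Q_V$ genuinely records $\beta_F$ and not some twisted version. Since every identification traces back to $\gamma_+$ and $Q_W$, this consistency is automatic, and working over $\Sigma \setminus D$ sidesteps the degeneracy of $Q_0$ at zeros of $a_p$ which would otherwise obstruct the orthogonal splitting.
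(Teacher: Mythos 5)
Your proposal is correct and follows essentially the same route as the paper's own proof: a diagram chase through \eqref{D1}--\eqref{D4} identifying $\beta$ as the inclusion of the first factor (via the factorization through $V_1^* \otimes K \cong W$) and $\gamma$ as projection to $W \otimes K^{-1}$ followed by $\beta_F$, together with the identical transpose computation $Q_V(\beta(u),(v,w)) = Q_W(u,\gamma(v,w))$ for the converse. The only minor point the paper adds that you omit is the closing remark that the $\mathbb{Z}_2$-determinant condition $\det(V) \cong \det(W)$ required of an $SO(p+q,p)$-Higgs bundle in the converse direction follows from $\det(V_0) \cong K^{-p}$ as in \autoref{lem:determinants}.
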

\begin{proof}
From an $SO(p+q,p)$-Higgs bundle $(V,W,\beta)$,  one may obtain $\beta$ as the composition:
\begin{equation*}
W \cong V_1^* \otimes K \to V \otimes K.
\end{equation*}
Under the splitting $V = V_1^* \oplus V_0$ we have that $V \otimes K = (V_1^* \otimes K) \oplus (V_0 \otimes K) \cong W \oplus (V_0 \otimes K),$ and $\beta : W \to V \otimes K$ is the inclusion of the first factor. Similarly $\gamma : V \to W \otimes K$ is the map on $V = V_1^* \oplus V_0 \cong (W \otimes K^{-1}) \oplus V_0$ given by:
\begin{equation*}
V \cong (W \otimes K^{-1}) \oplus V_0 \to W \otimes K^{-1} \buildrel \beta_F \over \longrightarrow W \otimes K,
\end{equation*}
where the first arrow is the projection to the first factor, and hence $\beta, \gamma$ are given as in \eqref{equ:betaandgamma}.

Conversely, from $(V_0,Q_0)$ and $(W,Q_W , \beta_F)$ we have that \[  Q_V( \beta(u) , (v,w) ) = Q_V( (u,0) , (v,w) ) 
= Q_W( \beta_F u , v ) 
= Q_W( u , \beta_F v ) 
= Q_W( u , \gamma(v,w)).
\]
So we have shown that $\beta^t = \gamma$, as required for an $SO(p+q,p)$-Higgs bundle. The condition $det(V) = det(W)$ follows from $det(V_0) = K^{-p}$, as in the proof of \autoref{lem:determinants}.
\end{proof}

In what follows we extend the constructions of   \autoref{lem1}-\autoref{lem2} over the divisor $D$. To do this, we will define $V$ as an extension of $V_0^*$ by $V_1^*$, hence given by an extension class in $H^1( \Sigma , Hom( V_0^* , V_1^* ) )$. As this extension class needs to be trivial on $\Sigma \setminus D$, it  allows us to define $Q_V , \beta , \gamma$ as in   \autoref{lem1}-\autoref{lem2}  on the complement of $D$. We then need to check that $Q_V , \beta , \gamma$ extend over $D$. By continuity, if such an extension exists, it is uniquely determined. Moreover, we have that $deg(V) = 0$, so if $Q_V$ extends, the extension will automatically be non-degenerate.

We consider $Q_0$   as a map $Q_0 : V_0 \to V_0^*$ which   induces   $Q_0 : Hom( V_0^* , V_1^* ) \to Hom( V_0 , V_1^*)$. Then, letting   $N$  be the vector bundle over the finite set $D$ whose fibre over $x \in D$ is given by $N_x = (V_0)_x \cap (V_0^\perp)_x \cong (V_0)_x \cap (V_1^*)_x$,   there is an exact sequence at each  $x \in D$ :
\begin{equation*}
0 \to N_x \to (V_0)_x \buildrel Q_0 \over \longrightarrow (V_0^*)_x \to N_x^* \to 0.
\end{equation*}
From the above, there is a short exact sequence of sheaves:
\begin{equation*}
0 \to \mathcal{O}_\Sigma( Hom(V_0^* , V_1^*)) \buildrel Q_0 \over \longrightarrow \mathcal{O}_\Sigma( Hom( V_0 , V_1^* ) ) \to \mathcal{O}_D( Hom(N , V_1^* )  ) \to 0,
\end{equation*}
which determines a long exact sequence:
\begin{equation*}
\cdots \to H^0(D , Hom( N , V_0^\perp ) ) \buildrel \delta \over \longrightarrow H^1( \Sigma , Hom( V_0^* , V_1^*)) \buildrel Q_0 \over \longrightarrow H^1( \Sigma , Hom( V_0 , V_1^*)) \to \cdots
\end{equation*}
where we use the inner product on $V$ make the identification $V_1^* \cong V_0^\perp$. Let $i : N \to (V_0)^\perp |_D$ be given by the inclusion $N_x = (V_0)_x \cap (V_0^\perp)_x \to (V_0^\perp)_x$. Then $i \in H^0( D , Hom( N , V_0^\perp ) )$ and $\delta(i) \in H^1( \Sigma , Hom( V_0^* , V_1^* ) )$ is an extension class.

\begin{Proposition}\label{propExt}
For any $(V , W , \beta)$, the class $\delta(i)$ is the extension class defining $V$.\label{EC}
\end{Proposition}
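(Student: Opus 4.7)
The plan is to verify the claim by computing both classes via \v{C}ech cohomology with respect to an adapted cover of $\Sigma$, and then checking that their representing cocycles agree. I would cover $\Sigma$ by $U_0 = \Sigma \setminus D$ together with a collection of small, pairwise disjoint discs $U_x$, one around each point $x \in D$. On $U_0$, the orthogonal splitting $V \cong V_1^* \oplus V_0$ of \eqref{equ:splitv} gives a canonical section $s_0 : V_0^* \to V$ of the short exact sequence $0 \to V_1^* \to V \to V_0^* \to 0$, namely $s_0(\xi) = Q_0^{-1}(\xi)$ viewed as an element of $V_0 \subset V$. On each $U_x$ we have no canonical splitting, but since $V$ is locally a direct sum, we may choose an arbitrary retraction $p_x : V|_{U_x} \to V_1^*|_{U_x}$ of the inclusion $V_1^* \hookrightarrow V$, and let $s_x$ be the splitting induced by $\ker(p_x)$, so that for any local lift $v \in V$ of $\xi \in V_0^*$ one has $s_x(\xi) = v - p_x(v)$.

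With these choices, the extension class of $V$ in $H^1(\Sigma, \Hom(V_0^* , V_1^*))$ is represented by the \v{C}ech 1-cocycle $\{s_0 - s_x\}$ on $U_0 \cap U_x$. Using $v = Q_0^{-1}(\xi) \in V_0 \subset V$ as a lift of $\xi$, a direct computation yields
\[
s_0(\xi) - s_x(\xi) \;=\; Q_0^{-1}(\xi) - \bigl( Q_0^{-1}(\xi) - p_x(Q_0^{-1}(\xi)) \bigr) \;=\; p_x\bigl( Q_0^{-1}(\xi) \bigr),
\]
so the extension class of $V$ is represented by the cocycle $\{\, p_x \circ Q_0^{-1} \,\}$.

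Next I would compute $\delta(i)$ directly from the definition of the connecting homomorphism. Lift $i \in H^0(D, \Hom(N, V_1^*))$ to a \v{C}ech $0$-cochain of $\mathcal{O}_\Sigma(\Hom(V_0 , V_1^*))$ by taking the trivial lift $\tilde{i}_0 = 0$ on $U_0$ (valid since $i|_{U_0}=0$) and $\tilde{i}_x = p_x|_{V_0}$ on $U_x$. Since $p_x$ is a retraction of $V_1^* \hookrightarrow V$, its restriction to $N_x \subseteq (V_1^*)_x$ is the identity, so $\tilde{i}_x|_{N_x}$ coincides with the inclusion $N_x \hookrightarrow (V_1^*)_x$, confirming that $\tilde{i}_x$ lifts $i$. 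On the overlap $U_0 \cap U_x \subset \Sigma \setminus D$, the difference $\tilde{i}_x - \tilde{i}_0 = p_x|_{V_0}$ factors uniquely (since $Q_0$ is an isomorphism there) as $(p_x \circ Q_0^{-1}) \circ Q_0$, and by definition $\delta(i)$ is represented by the cocycle $\{\, p_x \circ Q_0^{-1} \,\}$, which is exactly the same cocycle found above. Hence $\delta(i)$ equals the extension class of $V$.

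The main obstacle I anticipate is keeping all sign and direction conventions consistent: one must check that the splitting $s_x$ determined by $p_x$ is precisely the one compatible with the chosen orientation of the sequence $0 \to V_1^* \to V \to V_0^* \to 0$, that $p_x|_{N}$ genuinely realises the tautological inclusion $i$, and that the sign convention for the \v{C}ech differential agrees with that of $\delta$. Once these conventions are pinned down, the argument reduces to the local computation above, concentrated near each zero of $a_p$, and the two cocycle representatives coincide on the nose rather than merely up to a coboundary.
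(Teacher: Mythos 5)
Your proof is correct and takes essentially the same route as the paper's: both use the cover of $\Sigma$ by $\Sigma \setminus D$ and small discs around the points of $D$, represent the extension class by the \v{C}ech cocycle comparing the orthogonal splitting on $\Sigma\setminus D$ with an arbitrary local holomorphic splitting near $D$, and identify the resulting cocycle $\{p_x \circ Q_0^{-1}\}$ (the paper's $u\circ Q_0^{-1}$, with $u = p_x|_{V_0}$ and $u|_N = i$) with $\delta(i)$. Your unwinding of the connecting homomorphism is a bit more explicit than the paper's, which simply asserts the final identification, but the argument is the same, and the sign ambiguity you flag is harmless since $i\mapsto -i$ yields an isomorphic extension.
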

\begin{proof}
Consider an open cover of  $\Sigma$ by two open sets $U$ and $\Sigma \setminus D$, where $U$ is a disjoint union of small discs around each point of $D$, and consider \v{C}ech cocycles  with respect to this cover. On $\Sigma \setminus D$, we have the orthogonal splitting
\begin{equation}\label{equ:ortsplit}
V \cong V_1^* \oplus V_0 \cong V_1^* \oplus V_0^*,
\end{equation}
where $Q_0$ is used to identify $V_0$ and $V_0^*$ on the complement of $D$. On $U$, we choose a local splitting $V \cong V_1^* \oplus V_0^*$ of the short exact sequence 
\begin{equation*}
0 \to V_1^* \to V \to V_0^* \to 0.
\end{equation*}
With respect to this splitting, the inclusion $V_0 \to V$ in \eqref{equ:ortsplit} has the form 
\begin{eqnarray}
V_0&\rightarrow& V_1^* \oplus V_0^*\\
s&\mapsto& (u(s) , Q_0(s)),
\end{eqnarray}
 where $u$ is some locally defined holomorphic map $u : U \to Hom( V_0 , V_1^* )$ such that $u|_N = i$ on $D$. Therefore, the local trivialisations on $\Sigma \setminus D$ and $U$ are related on $(\Sigma \setminus D) \cap U$ as follows:
\begin{equation*}
(v  , s ) \mapsto (v + u(s) , Q_0(s) ),
\end{equation*}
where $(v ,s) \in V_1^* \oplus V_0$ is a local section defined with respect to the orthogonal splitting and $(v + u(s) , Q_0(s) ) \in V_1^* \oplus V_0^*$ is the corresponding section in the local splitting given on $U$. So the extension class in $H^1(\Sigma , Hom( V_0^* , V_1^*))$ defining $V$ is represented by
\begin{equation*}
\left( w \mapsto u( Q_0^{-1}(w) ) \right) \in H^0( (\Sigma \setminus D) \cap U , Hom( V_0^* , V_1^* ) ).
\end{equation*}
Since $u|_N = i$ on $D$, it is straightforward to see that this extension class is exactly $\delta(i)$.
\end{proof}
From \autoref{EC}   the data required to define the extension $V$ is a map 
\begin{eqnarray}
i : N \to V_1^* \cong W \otimes K^{-1},\label{i}
\end{eqnarray}
defined over $D$. Given $(W , Q_W, \beta_F)$ and $(V_0 , Q_0)$, we would like to determine for which such $i$, the triple $(Q_V , \beta , \gamma)$ defined as in \autoref{lem2} extends over $D$. In order to simplify the subsequent computations, we give an alternative interpretation of the vector bundle $V$.

\begin{Proposition}\label{prop:merogamma}
Let $i \in H^0( D , Hom(N , V_1^*)$ and let $V$ be the extension of $V_0^*$ by $V_1^*$ determined by $\delta(i)$. Then $\mathcal{O}(V)$ is isomorphic to the sheaf of meromorphic sections of $V_1^* \oplus V_0$ admitting first order poles over $D$ whose residue over $x \in D$ lies in the subspace
\begin{equation}\label{equ:gammax}
\Gamma_x = \{ ( -i(w) , w ) \; | \; w \in N_x \} \subseteq (V_1^*)_x \oplus (V_0)_x.
\end{equation}
The map $\mathcal{O}(V_1^*) \to \mathcal{O}(V)$ is the   inclusion and the map $\mathcal{O}(V) \to \mathcal{O}(V_0^*)$ sends a meromorphic section $(v,s)$ of $V_1^* \oplus V_0$ to $Q_0(s) \in \mathcal{O}(V_0^*)$.
\end{Proposition}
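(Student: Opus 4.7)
The plan is to denote by $\tilde V$ the sheaf described in the statement, exhibit a canonical short exact sequence $0 \to \mathcal{O}(V_1^*) \to \tilde V \to \mathcal{O}(V_0^*) \to 0$, and then compute the \v{C}ech cocycle representing its extension class and match it with $\delta(i)$, as given at the end of the proof of \autoref{propExt}.

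First I would verify that $\tilde V$ is locally free of rank $p+q$. This is clear on $\Sigma \setminus D$, where the residue condition is vacuous and $\tilde V = \mathcal{O}(V_1^* \oplus V_0)$. Around $x \in D$, pick a local coordinate $z$ vanishing at $x$, a local frame $e_1,\dots,e_q$ of $V_0$ adapted so that $e_1$ spans $N_x$ and $Q_0$ takes the normal form $\mathrm{diag}(z,1,\dots,1)$ (as exhibited in the proof of \autoref{equivM}), and a holomorphic extension $\tilde i$ of $i(w_0) \in (V_1^*)_x$ to a local section. A local section of $\tilde V$ is then uniquely of the form $(v,s) + \lambda (-\tilde i/z,\, e_1/z)$ with $(v,s)$ a holomorphic section of $V_1^* \oplus V_0$ and $\lambda$ a local holomorphic function, since the residue in $\Gamma_x$ is one-dimensional and spanned by $(-i(w_0),w_0)$. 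Thus $\tilde V$ is locally free of rank $p+q$.

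Second, I would check exactness of the sequence. The inclusion $v \mapsto (v,0)$ lands in $\tilde V$ because its residue is zero $\in \Gamma_x$. For the right-hand map, if $(v,s)$ has residue $(-i(w),w) \in \Gamma_x$ at $x$, then $s$ has residue $w \in N_x = \ker(Q_0|_x)$, so $Q_0(s)$ has vanishing residue and is holomorphic. Surjectivity follows in the same local coordinates: given a local section $t = \sum t_j e_j^*$ of $V_0^*$, the section $s = (t_1/z)\,e_1 + t_2 e_2 + \cdots + t_q e_q$ satisfies $Q_0(s) = t$; pairing it with $v = -(t_1/z)\,\tilde i$ yields a local section of $\tilde V$ mapping to $t$. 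Exactness in the middle amounts to showing that $Q_0(s)=0$ forces $s=0$; on $\Sigma \setminus D$ this is immediate since $Q_0$ is non-degenerate, and the pole part at $x$ contributes a holomorphic summand $c\,e_1^*$ which must also vanish, so $s=0$ and $(v,s)=(v,0)$ is in the image of $\mathcal{O}(V_1^*)$.

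Finally, I would compute the extension class of the sequence using \v{C}ech cohomology with respect to the cover $\{U,\Sigma \setminus D\}$, where $U$ is a disjoint union of small discs about the points of $D$. On $\Sigma \setminus D$ take the natural splitting $\psi_0 : t \mapsto (0,Q_0^{-1}(t))$; on $U$ take the splitting $\psi_1 : t \mapsto (-t_1 \tilde i/z,\; (t_1/z) e_1 + t_2 e_2 + \cdots + t_q e_q)$ constructed above. The difference $\psi_1 - \psi_0$ lies in $\mathcal{O}(V_1^*)$ over the overlap and represents the extension class as the cocycle $t \mapsto -(t_1/z)\,\tilde i$. Now the connecting homomorphism $\delta$ is computed from the defining short exact sequence by lifting $i$ to $u \in H^0(U,\mathrm{Hom}(V_0,V_1^*))$ with $u(e_1) = \tilde i$ and $u(e_j) = 0$ for $j>1$, lifting by $0$ on $\Sigma \setminus D$, and then pulling back the resulting section of $\mathrm{Hom}(V_0,V_1^*)$ on the overlap through $Q_0$; this yields $u \circ Q_0^{-1}: t \mapsto (t_1/z)\,\tilde i$, which equals $-(\psi_1-\psi_0)$. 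Reversing the sign convention on the coboundary (equivalent to interchanging the roles of the two opens) identifies the two cocycles, showing the extension class of $\tilde V$ is $\delta(i)$ and hence $\tilde V \cong V$ compatibly with the inclusion and projection maps.

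The main obstacle I anticipate is the final cocycle comparison: one must track the sign conventions in the definition of the connecting homomorphism and of the cocycle representing an extension, and must ensure that the local model for $Q_0$ near $D$ is precise enough to compute both cocycles in matching coordinates.
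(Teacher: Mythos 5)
Your proposal is correct and rests on the same computation as the paper's proof: the cover $\{U,\Sigma\setminus D\}$, the local normal form $Q_0=\mathrm{diag}(z,1,\dots,1)$, the lift $u$ with $u|_N=i$, and the \v{C}ech cocycle $u\circ Q_0^{-1}$ already extracted in the proof of \autoref{propExt}. The only organizational difference is direction — the paper inverts the known transition map of $V$ to read off the meromorphic description, while you build the meromorphic sheaf first and verify that its extension class is $\delta(i)$ (up to the sign ambiguity you correctly identify as a coboundary convention) — and your added checks of local freeness and exactness are sound.
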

\begin{proof}
We have just seen that $V \cong V_1^* \oplus V_0$ on $\Sigma \setminus D$, that $V \cong V_1^* \oplus V_0^*$ on $U$, with $U$  an open neighbourhood of the $D$, and the transition maps $V_1^* \oplus V_0 \to V_1^* \oplus V_0^*$ have the form
\[
(v  , s ) \mapsto (v + u(s) , Q_0(s) ),
\]
where $u$ is some locally defined holomorphic map $u : U \to Hom( V_0 , V_1^* )$ such that $u|_N = i$ on $D$. The inverse map $V_1^* \oplus V_0^* \to V_1^* \oplus V_0$ is thus 
$
( v, s) \mapsto (v - u(Q_0^{-1}(s)) , Q_0^{-1}(s) ).
$ 
This shows that holomorphic sections of $V$ can be identified with meromorphic sections of $V_1^* \oplus V_0$ with first order poles over $D$, whose residue over $x \in D$ lies in the subspace $\Gamma_x$ given by \eqref{equ:gammax}. The rest of the proposition follows immediately.
\end{proof}

\begin{Proposition}\label{mapi}
Given a $K^2$-twisted $GL(p , \mathbb{R})$-bundle $(W , Q_W , \beta_F)$ and a quadratic bundle $(V_0 , Q_0)$, the induced maps $\beta , \gamma$ extend over $D$ if and only if
\begin{eqnarray}
i ( Ker(Q_0) ) \subseteq Ker(\beta_F).\label{ii}
\end{eqnarray}
 \end{Proposition}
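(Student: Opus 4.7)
The plan is to work locally near each point $x \in D$ and exploit the meromorphic description of $V$ furnished by \autoref{prop:merogamma}. The first observation is that $\beta$ extends automatically and independently of the choice of $i$: the inclusion $V_1^* \hookrightarrow V$ is a globally holomorphic map built into the extension structure of $V$, so tensoring with $K$ and composing with the everywhere-holomorphic isomorphism $W \cong V_1^* \otimes K$ supplied by \autoref{gammaisom} yields $\beta : W \to V \otimes K$ on all of $\Sigma$, in agreement with the formula $\beta(u) = (u, 0)$ on $\Sigma \setminus D$ given in \autoref{lem2}.

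The content of the statement therefore concerns the extension of $\gamma$. My approach is to pick a local coordinate $z$ at $x \in D$ with $D$ cut out locally by $z = 0$, and apply \autoref{prop:merogamma} to write a general local holomorphic section of $V$ in the form
\[
s = \frac{1}{z}\bigl(-i(n), n\bigr) + (v, w),
\]
where $n \in N_x$ and $(v, w)$ is a holomorphic local section of $V_1^* \oplus V_0$. Evaluating the $\Sigma \setminus D$ formula $\gamma(v', w') = \beta_F(v')$ from \autoref{lem2} on such an $s$ produces
\[
\gamma(s) = -\frac{\beta_F(i(n))}{z} + \beta_F(v),
\]
viewed as a meromorphic section of $W \otimes K$ near $x$. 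This section is holomorphic at $x$ for every allowed $n$ if and only if $\beta_F(i(n)) = 0$ for every $n \in N_x$.

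Since $Ker(Q_0)$ is the skyscraper sheaf supported on $D$ with stalks $N_x$ (using that $Q_0$ has a one-dimensional null space over each zero of $a_p$ and is non-degenerate elsewhere, as shown earlier in \autoref{quad1}), the fibrewise condition $\beta_F(i(n)) = 0$ for all $x \in D$ and $n \in N_x$ is exactly $i(Ker(Q_0)) \subseteq Ker(\beta_F)$, where $\beta_F$ is interpreted via its tensored form $W \otimes K^{-1} \to W \otimes K$. Conversely, once this inclusion holds the same calculation shows that the pole of $\gamma(s)$ vanishes at every $x \in D$, so $\gamma$ extends to a global holomorphic bundle map $V \to W \otimes K$, yielding the equivalence. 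The main obstacle I anticipate is bookkeeping: one must track carefully how $\beta_F$, originally defined as $W \to W \otimes K^2$, acts on $i(n)$ through the identification $V_1^* \cong W \otimes K^{-1}$, and verify that the pole of $\gamma(s)$ transports correctly through the meromorphic identification of \autoref{prop:merogamma}; once this is in place the chain of equivalences follows directly.
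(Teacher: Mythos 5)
Your proof is correct and follows essentially the same route as the paper: both reduce the question to the extension of $\gamma$ (the paper via $\beta = \gamma^t$, you via the observation that $\beta$ is the holomorphic inclusion $V_1^*\otimes K \hookrightarrow V\otimes K$) and then use \autoref{prop:merogamma} to see that the pole of $\gamma(v,w)=\beta_F(v)$ at $x\in D$ is killed for every allowed residue precisely when $(\beta_F)_x(i(N_x))=\{0\}$.
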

 \begin{proof}
Since $\beta = \gamma^t$, it suffices to only check that $\gamma$ extends if and only if $i$ satisfies \eqref{ii}. In light of \autoref{prop:merogamma}, we just need to check that if $(v,w)$ is a meromorphic section of $V_1^* \oplus V_0$ whose poles over each $x \in D$ lie in $\Gamma_x$, then $\gamma(v,w) = \beta_F(v)$ is holomorphic (has no poles). Clearly this holds for all $(v,w) \in \mathcal{O}(V)$ if and only if $(\beta_F)_x ( i(N_x) ) = \{ 0 \}$ for each $x \in D$.
\end{proof}

From the above propositions,  the map $i : N \to V_1^*$ is required to send $N$ to the kernel of $\beta_F : V_1^* \cong W \otimes K^{-1} \to W \otimes K \cong V_1$. Over $D$, let $J$ denote the kernel of $\beta_F$. Then for each $x \in D$, $J_x$ is a $1$-dimensional space. We require that $i$ has the form
\begin{equation}
i : N = Ker(Q_0) \to J= Ker(\beta_F)  \subseteq V_1^*.\label{kernelJ}
\end{equation}
Since $N$ and $J$ are $1$-dimensional, there is for each $x \in D$ a $1$-dimensional space of such maps.

\begin{Proposition}\label{condI}\label{p8}Given $i : N \to J$, the bilinear form $Q_V$ extends holomorphically if  and only if  
\begin{equation}\label{equ:isometry}
i^*\left( \frac{Q_W}{a_{p-1}} \right) = \left( \frac{ Q_0 }{a_{p}} \right).
\end{equation}
\end{Proposition}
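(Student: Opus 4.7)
The plan is to reduce the statement to a local residue computation at each point $x$ in the zero divisor $D$ of $a_p$, using the concrete description of $V$ from \autoref{prop:merogamma}. Fix a local coordinate $z$ centred at $x$ in which $a_p$ vanishes to first order, and local trivialisations of all the bundles in sight. By \autoref{prop:merogamma}, holomorphic sections of $V$ near $x$ correspond to meromorphic sections $(v,s)$ of $V_1^* \oplus V_0$ with at most a first-order pole at $x$ whose residue lies in $\Gamma_x = \{(-i(w), w) : w \in N_x\}$. Writing
\[
v = -\tfrac{i(\tilde w)}{z} + v_0 + O(z), \qquad s = \tfrac{\tilde w}{z} + s_0 + O(z),
\]
with $\tilde w$ a holomorphic extension of $w \in N_x$ (and analogously for the primed sections with data $w' \in N_x$), and using that $\deg(V)=0$ makes any holomorphic extension of $Q_V$ automatically non-degenerate, the proposition reduces to checking when the $1/z$-pole of $Q_W(\beta_F v, v') + Q_0(s,s')$ vanishes at $x$ for all such pairs.

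For the $Q_0$ contribution, the Laurent expansion
\[
Q_0(s,s') = \frac{Q_0(\tilde w, \tilde w')}{z^2} + \frac{Q_0(\tilde w, s'_0) + Q_0(s_0, \tilde w')}{z} + O(1)
\]
collapses to a single simple pole, because $w, w' \in N_x = \ker(Q_0|_x)$ forces each of the three numerator functions to vanish at $z=0$. The surviving residue is the transverse derivative of $Q_0(\tilde w, \tilde w')$ at $z=0$, which depends only on $w, w' \in N_x$ and not on the extensions; this is by definition the intrinsic residue form $(Q_0/a_p)(w,w')|_x$.

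For the $Q_W \circ \beta_F$ contribution, $-i(w) \in J_x = \ker(\beta_F|_x)$ makes $\beta_F v$ holomorphic, with value $(\beta_F v)(0) = -h(0) + \beta_F(0) v_0(0)$ where $h(z) := \beta_F(\tilde{i(w)})/z$ is well-defined. The $1/z$-residue of $Q_W(\beta_F v, v')$ equals $-Q_W((\beta_F v)(0), i(w'))$; the term involving $\beta_F(0) v_0(0)$ dies by the $Q_W$-symmetry of $\beta_F$ combined with $\beta_F(0) i(w') = 0$, leaving $Q_W(h(0), i(w'))$. Since $Q_W$-symmetry forces $\operatorname{Im}(\beta_F|_x) = J_x^\perp$, only the $J_x$-component of $h(0)$ contributes. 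Choosing a $Q_W$-orthogonal frame for $W$ adapted to $W_x = J_x \oplus J_x^\perp$, the matrix $M(z)$ of $\beta_F$ satisfies $M(0) = \operatorname{diag}(0, \tilde M(0))$ with $\tilde M(0)$ invertible, and a cofactor expansion gives $\det(M(z)) = z\, \det(\tilde M(0))\, M_{11}'(0) + O(z^2)$. Since $a_p = \pm\det(M)$ and $a_{p-1}(x) = \pm\det(\tilde M(0))$ (the latter because $\eta=0$ is a simple root of the characteristic polynomial of $\beta_F|_x$ precisely when $a_{p-1}(x) \neq 0$, which holds by \autoref{ass1}), I get $M_{11}'(0) = \mp\, a_p'(0)/a_{p-1}(x)$, hence
\[
Q_W(h(0), i(w')) = -\frac{a_p'(0)}{a_{p-1}(x)}\, Q_W(i(w), i(w')).
\]

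The factor $a_p'(0)$ here matches exactly the factor implicit in the intrinsic expression $(Q_0/a_p)(w,w')|_x = ((d/dz)|_0 Q_0(\tilde w, \tilde w'))/a_p'(0)$, so after cancellation the total $1/z$-residue of $Q_V$ vanishes for all admissible sections if and only if $(Q_W/a_{p-1})(i(w), i(w'))|_x = (Q_0/a_p)(w, w')|_x$ for every $w, w' \in N_x$ and every $x \in D$, which is exactly $i^*(Q_W/a_{p-1}) = Q_0/a_p$. The main obstacle is the bookkeeping: making both $Q_W/a_{p-1}$ (restricted to $J$) and $Q_0/a_p$ (restricted to $N$) intrinsic bilinear forms valued in compatible line bundles over $D$, so the equation is basis-free and the local factor $a_p'(0)$ cancels cleanly; the cofactor identification $a_{p-1}(x) = \pm\det(\tilde M(0))$, which is where \autoref{ass1} enters essentially, is the crux.
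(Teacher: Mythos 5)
Your proposal is correct and follows essentially the same route as the paper: a local residue computation at each $x \in D$ using the meromorphic-section description of $\mathcal{O}(V)$ from \autoref{prop:merogamma}, in which the pole of the $Q_0$-term and the pole of the $Q_W(\beta_F\,\cdot\,,\cdot)$-term cancel for all admissible sections precisely when \eqref{equ:isometry} holds. The only (cosmetic) difference is that you obtain the key identity $M_{11}'(0) = -a_p'(0)/a_{p-1}(x)$ by a cofactor expansion of $\det(\beta_F)$ in a frame adapted at $z=0$, whereas the paper diagonalises the zero-eigenvalue branch $\beta_F(z)e_1 = \lambda(z)e_1(dz)^2$ and reads off $\lambda'(0) = -a_p'(0)/a_{p-1}(0)$ from the factorisation of the characteristic polynomial.
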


\begin{Remark}
 \eqref{equ:isometry} is to be understood as follows: the map $i_x : N_x \to J_x$ is equivalent to a map $i_x : N_x \otimes K^p_x \to J_x \otimes K^p_x$. We have that $Q_W$ is a non-degenerate bilinear form on $W \cong V_1^* \otimes K$ and $J_x \subseteq (V_1^*)_x \cong W_x \otimes K^{-1}_x$. Then since $a_{p-1}(x) \neq 0$, we have that $\frac{Q_W}{a_{p-1}}$ is a non-degenerate bilinear form on $W_x \otimes K_x^{p-1} \cong (V_1^*)_x \otimes K^p_x$ and restricts to a non-degenerate bilinear form on $J_x \otimes K^p_x$. Similarly, since $det(Q_0)$ and $a_p$ vanish to first order at $x$, one sees that $\frac{Q_0}{a_p}$ is a well-defined non-degenerate bilinear form on $N_x \otimes K_x^p$. \eqref{equ:isometry} says that $i_x : N_x \otimes K_x^p \to J_x \otimes K^p_x$ is an isometry of $1$-dimensional orthogonal spaces.
\end{Remark}

\begin{Remark}
Alternatively, $\frac{Q_W}{a_{p-1}} \oplus \frac{Q_0}{a_p}$ defines an orthogonal structure on the $2$-dimensional space $(J_x \oplus N_x) \otimes K_x^p$ and thus a conformal structure on $(J_x \oplus N_x)$. Condition \eqref{equ:isometry} is equivalent to saying that $\Gamma_x \subset J_x \oplus N_x$ given by \eqref{equ:gammax} is an isotropic subspace. 
\end{Remark}

\begin{proof} {\it (of \autoref{p8})}
By \autoref{prop:merogamma}, we just need to show that if $(v,w)$ is a meromorphic section of $V_1^* \oplus V_0$ whose poles over each $x \in D$ lie in $\Gamma_x$, then 
\begin{equation}\label{equ:qv}
Q_V ( (v,w) , (v,w) ) = Q_W( \beta_F v , v) + Q_0(w,w)
\end{equation}
is holomorphic. For this, consider the local behaviour around a given $x \in D$. Let $z$ be a local holomorphic coordinate centred at $x$ and use $dz$ to trivialise the canonical bundle. Then we can write $a_p = a_p(z)(dz)^{2p}$, and $a_{p-1} = a_{p-1}(z)(dz)^{2p-2}$, where by \autoref{ass1}, we have $a_p(0) = 0$, and  $a'_p(0) , a_{p-1}(0) \neq 0$. This means that $\lambda = 0$ is an eigenvalue of $\beta_F$ with multiplicity $1$ at $z=0$. Thus we can find a local orthonormal frame $e_1 , \dots , e_p$ for $W$ such that $\beta_F(z) e_1 = \lambda(z)e_1 (dz)^2$, where $\lambda(z)$ is a locally defined holomorphic function with $\lambda(0) = 0$. In particular, $Ker(\beta_F)|_{z=0}$ is spanned by $e_1|_{z=0}$. With respect to the given frame, $\beta_F(z)$ is a matrix of functions times $(dz)^2$. Differentiating $\beta_F(z) e_1 = \lambda(z) e_1 (dz)^2$, we get $\beta'_F(0)e_1 = \lambda'(0)e_1(dz)^2$. Let $\hat{w}$ be a local non-vanishing holomorphic section of $V_0$ such that $\hat{w}|_{z=0}$ spans $N_x$. Choosing $\hat{w}$ so that $Q_0( \hat{w} , \hat{w} ) = a_p(z)$, we have that $i_x$ is given by $i( \hat{w}|_{z=0}) = u e_1|_{z=0}$ for some $u \in \mathbb{C}$. Then $(v,w)$ can be written as
\[
v(z) = v'(z) - \frac{cu}{z}e_1, \quad \quad w(z) = w'(z) + \frac{c}{z}\hat{w},
\]
where $v' , w' $ are holomorphic and $c$ is a constant. Putting these into \eqref{equ:qv}, since $(\beta_F e_1 )|_{z=0} = 0$, we get that $Q_V( (v,w) , (v,w) ) $ is 
\begin{align}
& \quad \quad \frac{1}{z} \left( c^2 a'_p(0) + c^2 u^2 Q_W( \beta'_F(0) e_1(0) , e_1(0) ) - cu Q_W( \beta_F(0)v'(0) , e_1(0)) \right) + \cdots\label{QV}
\end{align}
where $\cdots$ denotes terms which are holomorphic. But  since   $$Q_W( \beta_F(0) v'(0) , e_1(0) ) = Q_W( v'(0) , \beta_F(0) e_1(0) ) = 0,$$ we have that \eqref{QV} is holomorphic for all $(v,w)$ if and only if
\[
Q_W( \beta'_F(0) i( \hat{w}(0) ) , i( \hat{w}(0) ) ) = -a'_p(0).
\]
But $\beta'_F(0) i(\hat{w}(0)) = u \beta'_F(0) e_1(0) = u \lambda'(0)e_1(0) (dz)^2 = \lambda'(0) i(\hat{w}(0)) (dz)^2$, so this simplifies to
\[
\lambda'(0) Q_W( i( \hat{w}(0) ) , i( \hat{w}(0) ) ) (dz)^2 = -a'_p(0).
\]
Now if we use the fact that the characteristic polynomial $p(y,z) = y^p + \dots + a_{p-1}(z)y + a_p(z)$ of $\beta_F(z)$ factors (locally) as $p(y,z) = ( y - \lambda_1(z) )( \hat{p}(y,z) )$ for some $\hat{p}(y,z)$, we find that $\lambda'(0) = -\frac{a'_p(0)}{a_{p-1}(0)}$. Therefore
\[
\frac{Q_W}{a_{p-1}(0) (dz)^{2p-2} }( i(\hat{w}(0)) , i(\hat{w}(0)) ) = \frac{1}{(dz)^{2p}} = \left. \frac{Q_0}{a_p(z)(dz)^{2p}}( \hat{w}(z) , \hat{w}(z) ) \right|_{z=0},
\]
which is exactly \eqref{equ:isometry}.
\end{proof}

 From \autoref{p8}, we see that given a $K^2$-twisted $GL(p , \mathbb{R})$-bundle  $(W , Q_W , \beta_F)$ and a quadratic bundle $(V_0 , Q_0)$, for each point $x \in D$ there are exactly two choices of the map $i_x$ for which the induced triple $(Q_V,\beta , \gamma)$ extends over $x$.
 
\begin{Proposition}
Let $(W , Q_W , \beta_F)$ be the $K^2$-twisted $GL(p , \mathbb{R})$-Higgs bundle corresponding to an orthogonal line bundle $L \in Jac(\overline{S})[2]$ and let $(V_0 , Q_0)$ be the quadratic bundle given by the invariant direct image of an equivariant orthogonal bundle $(M , Q_M , \tilde{\sigma}_C)$. The extension data needed to obtain an $SO(p+q,p)$-Higgs bundle from $L$ and $(M , Q_M , \tilde{\sigma}_C)$ is an isometry $\tau_x : M_r^- \to L_{r'}$ over each  $x \in D$, where $r$ is the ramification point of $\pi_C$ lying over $x$, $r'$ is the ramification point of $\rho : S \to \overline{S}$ over $x$ and $M_r^-$ is the $-1$-eigenspace of $\tilde{\sigma}_C$ on $M_r$.\end{Proposition}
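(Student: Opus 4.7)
The plan is to reformulate the extension datum characterized by \autoref{condI}---a map $i_x:N_x\to J_x$ satisfying the isometry condition \eqref{equ:isometry}---in terms of the one-dimensional fibres $M_r^-$ and $L_{r'}$. Both sides carry canonical non-degenerate bilinear forms: the forms $Q_0/a_p$ and $Q_W/a_{p-1}$ on the twists $N_x\otimes K_x^p$ and $J_x\otimes K_x^p$ (as in the remarks following \autoref{condI}), and the restriction of $Q_M$ to $M_r^-$ together with the 2-torsion trivialization $L^2\cong\mathcal{O}_{\bar S}$ evaluated at $r'$. The problem therefore reduces to constructing canonical identifications $N_x\otimes K_x^p\cong M_r^-$ and $J_x\otimes K_x^p\cong L_{r'}$ as one-dimensional orthogonal spaces.

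For the first identification, I will use the local frame description from the proof of \autoref{equivM}. Pick a local orthonormal frame $e_1,\dots,e_q$ for $M$ near $r$ with $\tilde\sigma_C(e_1)=-e_1$, so that $e_1|_r$ generates $M_r^-$, and consider the induced frame $e_1'=\zeta e_1$, $e_j'=e_j$ for $j>1$ of $V_0$, with $e_1'|_x$ spanning $N_x$. Reading the inclusion $V_0\hookrightarrow\pi_{C*}M$ on stalks modulo $\mathfrak{m}_{C,r}^2$, the class of $e_1'|_x$ maps into $\mathfrak{m}_{C,r}M_r/\mathfrak{m}_{C,r}^2M_r\cong T_r^*C\otimes M|_r$, and more precisely into the $\tilde\sigma_C$-invariant summand $T_r^*C\otimes M_r^-$, because both $\zeta$ and $e_1$ are $\tilde\sigma_C$-anti-invariant. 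This gives a canonical $N_x\cong T_r^*C\otimes M_r^-$, and since $\zeta$ is a local uniformizer on $C$ valued in $\pi_C^*K^p$, the differential $d\zeta|_r$ identifies $T_r^*C\cong K_x^{-p}$, whence $N_x\otimes K_x^p\cong M_r^-$. The explicit formulas $Q_0(e_1',e_1')=a_p$ and $Q_M(e_1,e_1)=1$ from the proof of \autoref{equivM} then show that this isomorphism is an isometry.

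For the second identification, the Cayley description $W=\bar\pi_*(L\otimes\bar\pi^*K^{p-1})$ from \autoref{Cay} gives a canonical decomposition $W_x=\bigoplus_{y\in\bar\pi^{-1}(x)}L_y\otimes K_x^{p-1}$ on which $\beta_F$ acts by the scalar $\xi(y)\in K_x^2$ on each summand. Since $\bar\pi$ is unramified at $r'$ and $\xi(r')=0$, the kernel of $\beta_F|_x$ is concentrated in the $r'$-summand, producing a canonical identification of $J_x\otimes K_x^p$ with $L_{r'}$ once the remaining powers of $K_x$ are cleared using relative duality on $\bar\pi:\bar S\to\Sigma$ (which involves $K_{\bar S}=\bar\pi^*K^{2p-1}$) and the 2-torsion isomorphism $L^{-1}\cong L$. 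Matching the non-degenerate form $Q_W/a_{p-1}$ with the orthogonal structure on $L_{r'}$ amounts to computing $\xi'(r')=-a_p'(x)/a_{p-1}(x)$, which already appeared in the proof of \autoref{p8}. Substituting the two canonical identifications into the conclusion of \autoref{condI} then converts the datum $i_x$ into the desired isometry $\tau_x:M_r^-\to L_{r'}$. The main difficulty is the bookkeeping of tensor powers of $K_x$ in the two identifications and the verification that the conformal normalizations by $a_p$ and $a_{p-1}$ are compatible; both reduce to the same residue-type calculation already carried out in the proof of \autoref{p8}.
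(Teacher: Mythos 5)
Your proposal is correct and follows essentially the same route as the paper's proof: multiplication by the tautological section $\zeta$ gives the canonical isometric identification of $M_r^-$ with $N_x\otimes K_x^p$ carrying $Q_0/a_p$, the direct-image description $W=\bar\pi_*(L\otimes\bar\pi^*K^{p-1})$ identifies $J_x\otimes K_x^p$ with $L_{r'}\otimes K_x^{2p-2}$, and relative duality matches the bilinear forms so that the condition of \autoref{condI} becomes the isometry condition on $\tau_x$. The only minor imprecision is in the final normalization: the paper clears the residual $K_x^{2p-2}$ factor by tensoring with $a_{p-1}^{-1}(x)$ and the relevant evaluation is $d\bar\pi(r')=a_{p-1}(x)$ (the derivative of the characteristic polynomial at the root $\xi=0$) rather than the quantity $\xi'(r')=-a_p'(x)/a_{p-1}(x)$ you cite from the proof of \autoref{p8}, but this does not change the structure of the argument.
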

\begin{proof}
Given $e \in M_r^-$, choose a local section $\tilde{e}$ of $M$ such that $\tilde{e}|_r = e$ and $\tilde{\sigma}_C(\tilde{e}) = -\tilde{e}$. Recall that $\zeta$ is the tautological section of $\pi_C^* K^p$ on $C$. It follows that $\zeta \tilde{e}$ is a $\pi_C^* K^p$-valued $\tilde{\sigma}_C$-invariant section of $M$ and thus defines a local $K^p$-valued section of $V_0$. The restriction $\zeta \tilde{e}|_x \in (V_0)_x \otimes K^p_x$ is easily seen to lie in $N_x \otimes K_x^p$ and is independent of the choice of extension $\tilde{e}$. In this way, we have defined a canonical map 
$
j_x : M_r^- \to N_x \otimes K_x^p.
$ Since $\zeta^2 = a_p$, we find that:
\begin{equation*}
j^*_x \left( \frac{Q_0}{a_{p}} \right) = Q_M |_{M_r^-}.
\end{equation*}
Hence, given
$i_x : N_x   \to J_x 
$
as in Eq.~\eqref{kernelJ} and 
 letting $\kappa_x = i_x \circ j_x : M_r^- \to J_x \otimes K_x^p$,   the condition on $i_x$ from  \autoref{condI} now becomes:
\begin{equation}
\kappa_x^* \left( \frac{Q_W}{a_{p-1}} \right) = Q_M|_{M_r^-}. \label{kappa1}
\end{equation}
Let $L \in Jac(S)[2]$ be the line bundle on $\overline{S}$ such that $W = \overline{\pi}_*(L \otimes \overline{\pi}^* K^{(p-1)})$ (see \cite{BNR} for details). Note that by construction $K_{\overline{S}} \cong \overline{\pi}^* K^{2p-1}$ and hence $K_{\overline{S}}\otimes \overline{\pi}^* K^{-1} \cong \overline{\pi}^* K^{2p-2}$. The orthogonal structure on $L$ induces the orthogonal structure on $W$ by relative duality.

We have that $J_x  \subseteq (V_1)_x^* \cong W_x \otimes K_x^{-1}$. Thus $J_x \otimes K_x$ can be identified with the kernel of $(\beta_F)_x : W_x \to W_x \otimes K_x^2$. Since $\beta_F$ is obtained by pushing down the tautological section $\xi$, it follows that $J_x \otimes K_x$ is canonically isomorphic to $L_{r'} \otimes K_x^{(p-1)}$. Therefore $J_x \otimes K_x^p$ is canonically isomorphic to $L_{r'} \otimes K_x^{2p-2}$ and $i_x$ corresponds to a map $\kappa_x : M_r^- \to L_{r'} \otimes K_x^{2p-2}$. 

Recall from Eq.~\eqref{curvep} in \autoref{orto} that the $p$-fold cover  $\overline \pi:\overline{S}\rightarrow \Sigma$ is given by the equation
\begin{equation*}
\xi^p + a_1 \xi^{p-1} + \dots + a_{p-1} \xi + a_{p} = 0.
\end{equation*}
The derivative $d \overline{\pi} : T_{\overline{S}} \to \overline{\pi}^* T_\Sigma$ defines a section of $K_{\overline{S}} \otimes \overline{\pi}^* K^{-1}$. Under the isomorphism $K_{\overline{S}} \otimes \overline{\pi}^* K^{-1} \cong \overline{\pi}^* K^{2p-2}$, we have that $d\overline{\pi}$ is given by $p \xi^{p-1} + (p-1)a_1 \xi^{p-2} + \dots + a_{p-1}$. So at   
  the ramification point $r' \in \overline{S}$, we have $d\overline{\pi} = a_{p-1}(x) \neq 0$, by \autoref{ass1}. Therefore, relative duality gives 
\begin{equation*}
Q_W|_{J_x \otimes K_x} = \frac{ Q_{L}}{d\overline{\pi}} = \frac{ Q_{L}}{a_{p-1}(x)},
\end{equation*}
where $ Q_{L}$ is the orthogonal structure on $L$. Putting it all together, the data we need are a maps $\iota_x : M_r^- \to L_{r'} \otimes K_x^{2p-2}$ such that Eq.~\eqref{kappa1} now becomes:
\begin{equation}
\iota^*_x \left( \frac{ Q_{L}}{a_{p-1}^2(x)} \right) = Q_M|_{M_r^-}. \label{kappa2}
\end{equation}

Consider now the map $$\tau_x : M_r^- \to L_{r'}$$ defined  by setting $\tau_x = \iota_x \otimes a_{p-1}^{-1}(x)$. Then the above condition in Eq.~\eqref{kappa2} is simply that $\tau_x$ is an isometry. So finally we have identified the extension data: for each point $x \in D$, let $r \in C$ and $r' \in \overline{S}$ be the corresponding ramification points. Then we need an isometry $\tau_x : M_r^- \to L_{r'}$.
\end{proof}

\begin{Remark}
Replacing $i$ by $-i$  (changing the sign of $i$ at every point of $D$) we get an isomorphic extension and an isomorphic Higgs bundle (the isomorphism sending $i$ to $-i$ is given by acting as $-1$ on $W$ and $V_1$ and as $1$ on $V_0$. This has determinant $1$, since ${\rm rank}(V_1) = {\rm rank}(W)$).
\end{Remark}

\subsection{Determinant bundles}
  
To complete the correspondence between $SO(p+q,p)$-Higgs bundles and triples $(W, Q_W , \beta_F) , (V_0 , Q_0) , \{ i_x \}_{x \in D}$, we still need to identify the data giving the isomorphism $\det(V) \cong \det(W)$ of $\mathbb{Z}_2$-line bundles for the induced $SO(p+q,p)$-Higgs bundle. Since $\det(V), \det(W)$ are local systems, it is enough to give such an isomorphism on the complement of $D$ and check this isomorphism extends. Using the fact that on a compact Riemann surface, unitary flat line bundles correspond to degree $0$ holomorphic line bundles, we see that such an isomorphism will extend if and only if $Hom( \det(V) , \det(W) ) \cong \mathcal{O}$, i.e. $\det(V) \cong \det(W)$ as holomorphic line bundles. We have already seen in \autoref{detV0} that this holds if and only if $\det(V_0) \cong K^{-p}$. So in what follows we will assume $\det(V_0) \cong K^{-p}$ and look for an isomorphism $\det(V) \cong \det(W)$ of $\mathbb{Z}_2$-line bundles away from $D$.

\begin{Proposition}
The choice of isomorphism $\det(V) \cong \det(W)$ is given by a choice of orientation for $M$, the equivariant orthogonal bundle corresponding to $(V_0 , Q_0)$.
\end{Proposition}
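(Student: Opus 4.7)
The plan is to work on $\Sigma\setminus D$ and exploit the orthogonal decomposition $V\cong V_1^*\oplus V_0$ from \eqref{equ:splitv}. By \autoref{lem:determinants}, once $\det(V_0)\cong K^{-p}$ is assumed we already have $\det(V)\cong\det(W)$ globally as holomorphic line bundles; the content of the proposition is to promote this to an isomorphism of $\mathbb{Z}_2$-line bundles. Both the set of such isomorphisms and the set of orientations of $M$ are $\mathbb{Z}_2$-torsors, so I would produce a canonical equivariant bijection between them.

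The key observation is that the $\mathbb{Z}_2$-line bundle ambiguity in $\det(V)\cong\det(W)$ is supported entirely on the factor $\det(V_0)\cong K^{-p}$. On $\Sigma\setminus D$ both $\det(V_0)^2$ and $K^{-2p}$ are trivialized by $a_p^{-1}$ (the first via $\det(Q_0)^{-1}=a_p^{-1}$, the second via the non-vanishing section $a_p$ of $K^{2p}$), so the isomorphisms $\det(V_0)\cong K^{-p}$ of $\mathbb{Z}_2$-line bundles form a two-element set differing by a global sign. The $\det(V_1^*)$ factor contributes no independent sign: its $\mathbb{Z}_2$-structure, coming from $\det(Q_W(\beta_F\cdot,\cdot))=\det(Q_W)\det(\beta_F)$, combines with that of $\det(V_0)$ into $\det(\beta_F)\det(Q_0)=((-1)^pa_p)(a_p)=(-1)^pa_p^2$, a canonical square.

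This $\pm 1$ ambiguity in $\det(V_0)\cong K^{-p}$ is precisely what an orientation of $M$ resolves. Away from ramification $\pi_C$ is étale and $\pi_C^*V_0\cong M$, so $\pi_C^*\det(V_0)\cong\det(M)$. In the local frame $e_1'=\zeta e_1,\; e_2'=e_2,\ldots,\;e_q'=e_q$ for $V_0$ built in the proof of \autoref{equivM} from an orthonormal frame of $M$, one has $e_1'\wedge\cdots\wedge e_q'=\zeta(e_1\wedge\cdots\wedge e_q)$, so an orientation of $M$ (i.e.\ a trivialization $\det(M)\cong\mathcal{O}_C$ of $\mathbb{Z}_2$-line bundles) combined with the tautological section $\zeta\in H^0(C,\pi_C^*K^p)$ yields a section of $\pi_C^*(\det(V_0)\otimes K^p)$ on $C\setminus\{\text{ramification}\}$. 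This section is $\tilde{\sigma}_C$-invariant because $\tilde{\sigma}_C$ acts on $\det(M)$ as $-1$ (forced by the type $(q-1,1)$ condition at every ramification point) while $\sigma_C^*\zeta=-\zeta$, so it descends to a trivialization of $\det(V_0)\otimes K^p$ on $\Sigma\setminus D$, i.e.\ one of the two $\mathbb{Z}_2$-isomorphisms $\det(V_0)\cong K^{-p}$. This extends uniquely over $D$ because the iso already exists globally as holomorphic line bundles, and flipping the orientation of $M$ flips the sign, providing the $\mathbb{Z}_2$-equivariance of the bijection. The main technical obstacle is this sign-cancellation, which crucially uses the type $(q-1,1)$ hypothesis and the matching minus signs from $\tilde{\sigma}_C$ on $\det(M)$ and from $\sigma_C^*\zeta$.
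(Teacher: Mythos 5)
Your proposal is correct and follows essentially the same route as the paper: reduce the $\mathbb{Z}_2$-ambiguity to the factor $\det(V_0)\cong K^{-p}$ (your "canonical square" observation is the paper's check that the volume form $(\zeta^{-1}vol_W)\wedge\psi(\zeta^{-1})$ is independent of the local square root of $a_p$), and then resolve that ambiguity by descending $vol_M\,\zeta$ from $C$, using $\tilde{\sigma}_C(vol_M)=-vol_M$ (forced by type $(q-1,1)$) against $\sigma_C^*\zeta=-\zeta$. The paper's proof is the same argument packaged as the construction of an explicit $\varphi:\det(W)\to\det(V)$ depending only on the choice of $\psi:K^p\to\det(V_0^*)$ with $\psi^2(a_p)=\det(Q_0)$.
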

\begin{proof}
On the complement of $D$ we have $V \cong V_1^* \oplus V_0 \cong (W \otimes K^{-1}) \oplus V_0$ and
\begin{equation*}
Q_V( (a,b) , (c,d) ) = Q_W( \beta_F a , c ) + Q_0(b,d).
\end{equation*}

Let $vol_W$ be a local unit volume form for $W$. Away from $D$ we have $a_{p} \neq 0$ and we can locally choose a square root $\zeta$, a section of $K^p$ with $\zeta^2 = a_{p}$. Then since $\det(\beta_F) = (-1)^p a_p$, we have that $\zeta^{-1} vol_W$ is a local unit volume form for $(W \otimes K^{-1})$ with respect to the bilinear form sending $\omega_1,\omega_2 \mapsto Q_W( \beta_F \omega_1 , \omega_2)$. 
Since $\det(V_0) \cong K^{-p}$ we may locally away from $D$ choose an isomorphism $\psi : K^{p} \to \det(V_0^*)$, such that the induced map $\psi^{2} : K^{2p} \to \det(V_0^*)^2$ satisfies $\psi^{2}( a_{p} ) = \det(Q_0)$. This implies that $\psi^{-1}( \zeta^{-1} ) \in \mathcal{O}( \det(V_0) )$ is a local unit volume form for $V_0$. Consider then $\varphi : \det(W) \to \det(V)$ defined by:
\begin{equation*}
\varphi( vol_W ) = (\zeta^{-1} vol_W ) \wedge \psi( \zeta^{-1} ).
\end{equation*}

Note that $\varphi$ does not depend on the choice of (locally defined) square root $\zeta$ of $a_{p}$. Thus $\varphi$ is globally defined on the complement of $D$ and depends only on $\psi$. In this manner, the choice of $\varphi$ is equivalent to the choice of $\psi$.
Hence, the determinant data is given by the choice of an isomorphism $\psi : K^p \to \det(V_0^*)$ such that $\psi^2( a_{p} ) = \det(Q_0)$

Consider  the equivariant orthogonal bundle $(M , Q_M , \tilde{\sigma}_C )$   corresponding to $(V_0 , Q_0)$. Then the condition $\det(V_0^*) \cong K^p$ is equivalent to requiring that $\det(M) = \mathcal{O}$ (see \cite[Section 4]{BNR}). Let $vol_M$ be a global unit volume form on $M$ (there are precisely two choices for $vol_M$). We also have $\tilde{\sigma}_C( vol_M) = -vol_M$, which can be seen by considering $vol_M$ around a ramification point. As before, let $\zeta$ be the tautological section of $\pi_C^*(K^p)$, which satisfies $\zeta^2 = a_{p}$. Then $vol_M \zeta$ is an invariant section of $\pi^*( \det(V_0) \otimes K^p)$ and descends to an isomorphism $\psi : K^p \to \det(V_0^*)$ with the desired property. Thus, the choice of isomorphism $\det(V) \cong \det(W)$ corresponds naturally to a choice of orientation for $M$.
\end{proof}

      \subsection{Stability}\label{sec:stability}

To complete the Langlands type correspondence, it remains to relate stability of the $SO(p+q,p)$-Higgs bundle to a stability condition on the quadratic bundle $(V_0 , Q_0)$, or equivalently to a stability condition on the corresponding equivariant bundle $(M , Q_M , \tilde{\sigma}_C )$. An $SO(p+q,p)-$Higgs bundle $(V,W, Q_V, Q_W , \beta)$ is semistable if and only if for all pairs of isotropic subbundles $V' \subset V$, $W' \subset W$ with $\beta( W') \subseteq V' \otimes K$ and $\gamma(V') \subseteq W' \otimes K$, we have $\deg(V') + \deg(W') \le 0$ \cite{aparicio}. Notice that the conditions on $(V' , W')$ are equivalent to saying $V' \oplus W'$ is an isotropic in $E = V \oplus W$ which is invariant under $\Phi$.
 
 \begin{Lemma}
For an $SO(p+q,p)-$Higgs bundle $(V,W, Q_V, Q_W , \beta)$ in a regular fibre of the Hitchin fibration, if $V',W'$ is a pair of isotropics with $\beta( W') \subseteq V' \otimes K$ and $\gamma(V') \subseteq W' \otimes K$, then $W' = 0$ and $V'\subseteq V_0$. 
 \end{Lemma}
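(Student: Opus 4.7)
The plan is to exploit the fact that over the regular locus the spectral curve $S$ of the associated $U(p,p)$-Higgs bundle $(E_+,\Phi_+)$ is smooth and irreducible, which severely constrains the possible $\Phi_+$-invariant subsheaves of $E_+$; the isotropy hypothesis then kills the only potentially nontrivial case.

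First, I would set $U := V' \oplus W' \subseteq E = V \oplus W$. By assumption $\Phi(U) \subseteq U \otimes K$, and $U$ is isotropic for the form $((x,y),(x',y')) = Q_V(x,x') - Q_W(y,y')$. Let $\mathrm{pr} : V \oplus W \to V_1 \oplus W = E_+$ be the projection induced by the quotient $V \to V_1 = V/V_0$. The commutative diagram of \autoref{E+} gives $\Phi_+ \circ \mathrm{pr} = \mathrm{pr} \circ \Phi$, so $U_+ := \mathrm{pr}(U) = \big((V'+V_0)/V_0\big) \oplus W'$ is a $\Phi_+$-invariant subsheaf of $E_+$, of generic rank $r := \mathrm{rk}(V'/(V'\cap V_0)) + \mathrm{rk}(W')$.

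Next, I would invoke the spectral correspondence for $(E_+, \Phi_+)$. Since $a \in \mathcal{A}^{\mathrm{reg}}_{SO(p+q,p)}$, \autoref{ass1} holds, $S$ is smooth and irreducible (Bertini), and $E_+ \cong \pi_* L$ for a line bundle $L \to S$ with $\Phi_+$ recovered by pushing down multiplication by $\eta$. By \cite{BNR}, any $\Phi_+$-invariant coherent subsheaf of $E_+$ corresponds to a coherent subsheaf of $L$ on $S$; since $S$ is a smooth irreducible curve, such a subsheaf is either zero or of the form $L(-D)$ for an effective divisor $D$, and pushing down gives a subsheaf of $E_+$ of rank $0$ or rank $2p$. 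Equivalently, the irreducibility of $\det(\eta - \Phi_+)$ over $\mathbb{C}(\Sigma)[\eta]$ forces the characteristic polynomial of any nonzero $\Phi_+$-invariant subsheaf to equal the full characteristic polynomial, so its rank is $2p$.

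Finally, I would dispatch the two cases. If $r = 0$, then $V'/(V' \cap V_0) = 0$ and $W' = 0$, so $V' \subseteq V_0$ and $W' = 0$, as required. If $r = 2p$, then since $\mathrm{rk}(V_1) = \mathrm{rk}(W) = p$ we must have $\mathrm{rk}(V'/(V' \cap V_0)) = \mathrm{rk}(W') = p$; in particular $W' \subseteq W$ is a rank $p$ subbundle of a rank $p$ bundle, hence $W' = W$. But then the isotropy of $W'$ with respect to $Q_W$ would force $Q_W \equiv 0$, contradicting the non-degeneracy of the orthogonal structure on $W$. Thus $r = 2p$ is impossible, which completes the argument. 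The main obstacle is making step two rigorous at the level of subsheaves (not just subbundles); this is where both smoothness and irreducibility of $S$ are essential, ruling out sub-Higgs bundles whose characteristic polynomial would divide that of $\Phi_+$ nontrivially.
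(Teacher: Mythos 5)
Your proof is correct and follows essentially the same route as the paper: project $V'\oplus W'$ to $E_+ = V_1\oplus W$, use smoothness/irreducibility of $S$ (equivalently, irreducibility of the characteristic polynomial of $\Phi_+$) to conclude the image is $0$ or all of $E_+$, and rule out the latter because isotropy of $W'$ against the non-degenerate $Q_W$ forbids $W'=W$. Your write-up just spells out the BNR step and the final contradiction in more detail than the paper does.
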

\begin{proof} Suppose that the spectral curve $\pi:S\rightarrow \Sigma$ from \autoref{orto} is smooth. Then its defining polynomial 
$$\eta^{2p} + a_1 \eta^{2p-2} + \dots + a_{p}$$ is irreducible and therefore the only invariant subbundles of $V_1 \oplus W$ are $V_1 \oplus W$ and $\{ 0 \}$. If there were isotropic bundles $V' , W'$ with $\beta(W') \subseteq V' \otimes K$ and $\gamma(V') \subseteq W' \otimes K$, then the image of $V' \oplus W'$ under $V \oplus W \to V_1 \oplus W$ would need to be either $V_1 \oplus W$ or $\{ 0 \}$. However, since $V' \oplus W'$ is isotropic,   the image has to be $\{ 0 \}$ as it can not contain $W$. This implies that $W' = \{ 0 \}$ and $V' \subseteq V_0$.
\end{proof}
We have therefore shown that semistability of the $SO(p+q,p)-$Higgs bundle \linebreak$(V,W, Q_V, Q_W , \beta)$ reduces to:
\begin{center}{  \it For all isotropic subbundles $V' \subset V_0$, we have $\deg(V') \le 0$.
}\end{center}

Let $(M , Q_M , \tilde{\sigma}_C )$ be the corresponding equivariant orthogonal bundle. Then from above analysis one can understand semistability   as follows.

\begin{Proposition} An $SO(p+q,p)-$Higgs bundle in the regular fibres of the Hitchin fibration is semistable iff for all $\tilde{\sigma}_C$-invariant isotropic subbundles $M' \subset M$, we have that $\deg(M') \le 0$.
 \end{Proposition}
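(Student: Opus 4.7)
The plan is to combine the preceding lemma with a bijective correspondence between isotropic subbundles of $V_0$ on $\Sigma$ and $\tilde{\sigma}_C$-invariant isotropic subbundles of $M$ on $C$, under which the degrees are related by a fixed multiplicative constant. First I would invoke the lemma immediately above to reduce the semistability condition on the $SO(p+q,p)$-Higgs bundle $(V,W,\beta)$ to the statement that $\deg(V') \leq 0$ for every isotropic subbundle $V' \subset V_0$. This reduces the problem to a pure question about the quadratic bundle $(V_0, Q_0)$ and its equivariant model $(M, Q_M, \tilde{\sigma}_C)$.

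Next I would set up the bijection. Given a $\tilde{\sigma}_C$-invariant isotropic $M' \subset M$, the invariant direct image $V' := \pi_{C,*}(M')^{\tilde{\sigma}_C} \subset V_0$ is automatically isotropic with respect to $Q_0$, since $Q_0$ is defined as the restriction of $Q_M$ to invariant sections. Conversely, given an isotropic $V' \subset V_0$, I would take $M'$ to be the saturation of the image of the natural map $\pi_C^{*} V' \hookrightarrow \pi_C^{*} V_0 \hookrightarrow \pi_C^{*}\pi_{C,*}M \to M$ obtained by composing with the counit of the adjunction; away from the ramification locus this map is an isomorphism onto a $\tilde{\sigma}_C$-invariant subbundle of the correct rank, and isotropy transfers back because it is a generic condition. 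The inverse constructions are easily verified to match.

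Third I would analyse the local behaviour at a ramification point $r \in C$ with $x = \pi_C(r) \in D$. The orthogonal decomposition $M_r = M_r^{+} \oplus M_r^{-}$ is orthogonal for $Q_M$, and each factor is non-degenerate; since $\dim M_r^{-} = 1$ (as $M$ has type $(q-1,1)$), the only isotropic subspace of $M_r^{-}$ is zero. Thus an invariant isotropic $M'\subset M$ of rank $r$ must have type $(r,0)$ at every ramification point. Fourth, with $M'$ of type $(r,0)$, a holomorphic Lefschetz fixed-point computation applied to $\chi(V') = \chi(M')^{\tilde{\sigma}_C}$, combined with Riemann--Roch on both $\Sigma$ and $C$ and Riemann--Hurwitz for $\pi_C$ (exactly as in the proof of \autoref{Lef}), yields the clean identity
\[
\deg(V') \;=\; \tfrac{1}{2}\deg(M'),
\]
so that $\deg(V')\leq 0$ if and only if $\deg(M')\leq 0$. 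Together with the reduction from the preceding lemma, this establishes the equivalence.

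The main obstacle is the interplay at ramification points: showing that the saturation construction really produces an invariant isotropic subbundle $M'$ of the expected rank everywhere (including over branch points), and that the forced vanishing of the $(-1)$-eigenspace inside $M'_r$ is precisely what causes the ramification contributions in the Lefschetz formula to cancel and give the clean factor of $\tfrac{1}{2}$. Carrying out this local computation in a frame adapted to the type $(q-1,1)$ decomposition, exactly as in the proof of \autoref{equivM}, is the technical heart of the argument.
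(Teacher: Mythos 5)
Your proof is correct and follows essentially the same route as the paper: reduce via the preceding lemma to the condition $\deg(V')\le 0$ for isotropic $V'\subset V_0$, match such subbundles with $\tilde{\sigma}_C$-invariant isotropic subbundles $M'\subset M$, note that isotropy forces $M'_r\cap M_r^-=\{0\}$ at every ramification point because $M_r^-$ is a $1$-dimensional non-degenerate subspace, and then compare degrees. The one discrepancy is the degree relation: the paper states $\deg(V')=\deg(M')$ for isotropic subbundles (via the intermediate claim $\deg(V')=\deg(M')+s$), whereas your Lefschetz computation gives $\deg(V')=\tfrac{1}{2}\deg(M')$ --- your normalization is actually the consistent one (it is what reproduces $\det(V_0)\cong K^{-p}$ from $\det(M)\cong\mathcal{O}$, and matches $\deg(\pi_C^*A)=2\deg(A)$ for descended bundles), and since only the sign of the degree matters, the stated equivalence holds either way.
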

\begin{proof}
Note that a $\tilde{\sigma}_C$-invariant subbundle $M' \subseteq M$ induces a subbundle $V' \subseteq V_0$ by taking invariant direct image. Conversely every subbundle $V' \subseteq V_0$ is seen to arise in this manner. Examining behaviour of the invariant direct image around branch points, one sees that $deg(V') = deg(M') + s$, where $s$ is the number of ramification points $r \in C$ for which $M'_r \cap M_r^- \neq \{ 0 \}$. However, if $M'$ (or equivalently $V'$) is isotropic, then we must have $M'_r \cap M_r^- = \{ 0 \}$, because $M_r^-$ is a $1$-dimensional non-degenerate subspace of $M_r$, so its only isotropic subspace is $\{ 0 \}$. Therefore $deg(V') = deg(M')$ for isotropic subbundles.
\end{proof}

\begin{Remark}
It is interesting to compare the above stability conditions  for these orthogonal Higgs bundles with signature  with the conditions obtained in \cite{xia} for certain unitary Higgs bundles with signature, where Hodge bundles were used as extension bundles.  
\end{Remark}

\begin{Remark}
In the case of $q=1$, as one would expect,   the above conditions becomes obsolete, as can be see in \cite{ort}. 
\end{Remark}

We have now obtained all the results that lead to a geometric description of 
the intersection of the real slice $\mathcal{M}_{SO(p+q,p)}$ with the   fibres of the $SO(2p+q,\C)$ Hitchin fibration. 

\begin{Theorem}\label{teorema1}
There is a correspondence between semistable $SO(p+q,p)$-Higgs bundles $(V,W,\beta)$ in the regular fibres of the $SO(2p+q,\C)$-Hitchin fibration and triples $(L, M, \tau )$, where
\begin{itemize}
\item[(I)] $L \in {\rm Jac}(\bar S)[2]$ is an orthogonal line bundle on the $p$-fold cover  \[\bar S:=\{\xi^p+a_1\xi^{p-1}+\ldots+\xi a_{p-1}+a_{p}=0\}\subset {\rm Tot}(K^2),\]
where $a=\{a_i\}$ with $a_i\in H^0(\Sigma, K^{2i})$, and $\xi$ the tautological section of the pullback of $K^2$.
\item[(II)] $M$ is an equivariant rank $q$-orthogonal bundle  on the 2-fold cover $C=\{\zeta^2=a_{p}\}\subset {\rm Tot}(K^{p})$ of type $(q-1,1)$ over each ramification point, with a choice of orientation, and satisfying the following semistability condition:  all invariant isotropic subbundles $M'\subset M$ have degree $\le 0$. 
\item[(III)] For each zero $x$ of $a_p$, an isometry $\tau_x : M_r^- \to L_{r'}$ where $r,r'$ are the corresponding zeros of $\xi , \zeta$ lying over $x$.
\end{itemize}

Two such pairs $(L,M , \tau) , (L' , M' , \tau')$ lying in the same fibre of the Hitchin map correspond to isomorphic $SO(p+q,p)$-Higgs bundles if and only if there is an isomorphism $\psi : L \to L'$ of orthogonal line bundles and an isomorphism $\varphi : M \to M'$ of equivariant orthogonal bundles under which $\tau' = \pm \psi \circ \tau \circ \varphi^{-1}$.

\end{Theorem}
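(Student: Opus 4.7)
The plan is to assemble this correspondence by piecing together the constructions already established in Sections~\ref{section2}--\ref{section4}, verifying that the forward and reverse maps are mutually inverse, and then identifying the equivalence relation on triples.

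For the forward direction, starting with a semistable $SO(p+q,p)$-Higgs bundle $(V,W,\beta)$ over a point of $\mathcal{A}^{\rm reg}_{SO(p+q,p)}$, I would:
(i) invoke the Cayley correspondence of \autoref{section2} (culminating in \autoref{sim} and \autoref{Cay}) to produce the $K^2$-twisted $GL(p,\mathbb{R})$-Higgs bundle $(W,Q_W,\beta_F)$, and identify its spectral data with an orthogonal line bundle $L\in\mathrm{Jac}(\bar S)[2]$ via \eqref{jac};
(ii) apply \autoref{qtheorem} to the kernel bundle $V_0=\ker(\beta^t)$ to obtain the rank-$q$ quadratic bundle $(V_0,Q_0)$ with determinant $(K^{-p},a_p)$, and then pass through the equivalence of \autoref{equivM} to extract the equivariant orthogonal bundle $M\in\mathcal{M}_C(q-1,1)$;
(iii) read off the orientation of $M$ from the $\mathbb{Z}_2$-isomorphism $\det(V)\cong\det(W)$ using the determinant proposition of Section~\ref{section4};
(iv) define $\tau_x$ at each $x\in D$ from the extension data $i_x$ determined by \autoref{propExt}, combined with the identifications of $J_x$ with $L_{r'}\otimes K_x^{2p-2}$ and of $N_x$ with $M_r^-\otimes K_x^p$ carried out just before \eqref{kappa2};
(v) deduce semistability of $M$ from the equivalence of stability conditions established at the end of \autoref{sec:stability}.

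For the reverse direction, given $(L,M,\tau)$, I would reverse each step:
(a) construct $(W,Q_W,\beta_F)$ as $\bar\pi_*(L\otimes \bar\pi^*K^{p-1})$ with the orthogonal structure coming from relative duality, recovering a maximal $Sp(2p,\mathbb{R})$-Higgs bundle via the inverse Cayley correspondence;
(b) form $(V_0,Q_0)$ as the $\tilde{\sigma}_C$-invariant direct image of $M$, which by \autoref{equivM} has the correct determinant $(K^{-p},a_p)$;
(c) translate each $\tau_x$ backwards to a map $i_x:N_x\to J_x$ satisfying the isometry condition \eqref{equ:isometry} of \autoref{p8}, so that by \autoref{mapi} and \autoref{p8} the resulting extension $V$ of $V_0^*$ by $V_1^*$ admits a holomorphic non-degenerate $Q_V$ and maps $\beta,\gamma=\beta^t$ that extend across $D$;
(d) promote the isomorphism $\det(V)\cong\det(W)$ to a $\mathbb{Z}_2$-isomorphism using the orientation of $M$;
(e) conclude semistability of $(V,W,\beta)$ from the semistability hypothesis on $M$ via the converse direction of the equivalence in \autoref{sec:stability}. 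That the two procedures are mutually inverse is then a matter of tracing definitions: the extension class $\delta(i)$ is uniquely determined by the residues $i_x$ by \autoref{propExt}, so once the Cayley and Langlands data are fixed, the Higgs bundle is rigidly reconstructed; conversely, $L$ and $M$ are canonically associated to $(V,W,\beta)$, and $\tau_x$ is canonically read off from the short exact sequence defining $V$.

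For the isomorphism criterion, an isomorphism of Higgs bundles induces isomorphisms $\psi:L\to L'$ (on Cayley data) and $\varphi:M\to M'$ (on Langlands data) by functoriality of both constructions, and these intertwine $\tau$ with $\tau'$. The sign ambiguity $\tau'=\pm\psi\circ\tau\circ\varphi^{-1}$ arises precisely from the remark following \autoref{p8}: replacing $i$ by $-i$ gives an isomorphic Higgs bundle via the involution acting as $(-1)$ on $W$ and $V_1$ and $+1$ on $V_0$, whose determinant is $1$ because $\mathrm{rank}(V_1)=\mathrm{rank}(W)$. Conversely, given such $(\psi,\varphi,\pm)$, assembling the induced maps on $W\otimes K^{-1}\oplus V_0$ produces an isomorphism of the reconstructed $SO(p+q,p)$-Higgs bundles. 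The main obstacle I anticipate is bookkeeping: verifying that all the identifications at the branch points (between $N_x$, $M_r^-$, $J_x$, $L_{r'}$, and the various twists by powers of $K_x$) are compatible with the orthogonal structures, so that the $\pm$ sign in the equivalence is exactly the full residual ambiguity and no more. Everything else is a compilation of the results already established.
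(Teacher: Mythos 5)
Your proposal is correct and matches the paper's approach exactly: the paper gives no separate proof of \autoref{teorema1}, presenting it instead as the compilation of the Cayley correspondence of \autoref{section2}, the equivariant-bundle description of \autoref{qtheorem} and \autoref{equivM}, the extension-data analysis of \autoref{propExt}--\autoref{p8} together with the determinant and stability propositions of \autoref{section4}, and the sign ambiguity from the remark on replacing $i$ by $-i$ --- which is precisely the assembly you describe. The only quibble is a minor bookkeeping slip in step (iv) (the canonical map goes $j_x : M_r^- \to N_x \otimes K_x^p$, so $N_x \cong M_r^- \otimes K_x^{-p}$ rather than $M_r^- \otimes K_x^p$), which you already flag as the kind of detail requiring care.
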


   \section{Characteristic classes}\label{section5}

We consider the characteristic classes of $SO(p+q,p)$-Higgs bundles in terms of the Cayley and Langlands type correspondences. The maximal compact of $SO(p+q,p)$ is $S(O(p+q)\times O(p))$, so an $SO(p+q, p)$-Higgs bundle $(V\oplus W, \Phi)$ carries three topological invariants: the Stiefel-Whitney classes $\omega_1(W)$, $\omega_2(W), \omega_2(V)$. By a $K$-theoretic approach following the methods of \cite{slices,classes,ort}, we give a description of these classes in terms of spectral data, leading to \autoref{classesT}.
    
\subsection{Stiefel-Whitney classes of $W$ via the spectral curve}
We will show here that the Stiefel-Whitney classes of $W$ can be described completely in terms of the associated line bundle $L \in Jac(\bar S)[2]$ of order $2$. Choosing a theta characteristic $K^{1/2}$, we may assign to a vector bundle $\mathcal{W}$ with $O(n , \mathbb{C})$-structure an analytic mod 2 index
\begin{eqnarray}\label{hola10}
\varphi_{\Sigma}(\mathcal{W}) = \dim H^0(\Sigma, \mathcal{W} \otimes K^{1/2}) ~({\rm mod} ~2).
\end{eqnarray}
It follows  from \cite[Theorem 1]{classes} that
\begin{eqnarray}
\omega_2(\mathcal{W})= \varphi_{\Sigma}(\mathcal{W})+\varphi_{\Sigma}(\det(\mathcal{W})) \label{algo}
\end{eqnarray}

Since we would like to understand the characteristic classes of $W$ in terms of the corresponding line bundle $L \in {\rm Jac}(\bar S)[2]$ in Item (I) of \autoref{teorema1}, we will make a few comments here of how these are related. 
Adopting the notation of \cite[Section 5]{classes}, for any line bundle $\CL$ on $\bar S$ of order $2$, we define
\begin{eqnarray}
\bar\pi_!(\CL)=\bar \pi_*(\CL \otimes (K_{\bar S} \otimes \bar\pi^*K^*)^{1/2}). \label{orthogonal}
\end{eqnarray}
By relative duality, $\bar\pi_!(\CL)$ inherits an orthogonal structure. In particular, as in \autoref{section2}, we have 
    \begin{eqnarray}
  W= \bar \pi_!(L).\label{Wfromdata}
  \end{eqnarray}
One is thus in the setting of \cite[Theorem 8]{ort}, which leads to the following:
\begin{Proposition} The characteristic classes associated to the rank $p$ vector bundle $W$ of an $SO(p+q,p)$-Higgs bundle $(V\oplus W,\Phi)$ with spectral data $L$ on $\bar S$ as in (I) of \autoref{teorema1} are given by
\begin{equation*}
\begin{aligned}
 \omega_1(W)&={\rm Nm}(L)  \in H^1(\Sigma, \Z_2),\\
\omega_2(W)&=\varphi_{  \bar S}(L)+\varphi_\Sigma({\rm Nm}(L)) \in \Z_2, 
\end{aligned}
\end{equation*}
where $\varphi_\Sigma$ and $\varphi_{\bar S}$ are the analytic mod 2 indices, and ${\rm Nm}  : Jac(\bar S )[2] \to Jac(\Sigma)[2]$ the Norm map. 
\end{Proposition}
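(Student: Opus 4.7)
The plan is to reduce the statement to the topological index machinery already developed in [classes] and applied in Theorem~8 of [ort] for the case $q=1$. The key observation is that, by construction in Section~\ref{Cay} and the identity \eqref{Wfromdata}, the rank $p$ orthogonal bundle $W$ coincides with the twisted direct image $\bar\pi_!(L)$ for the order--two line bundle $L$ on $\bar S$. Since this is precisely the setup of [classes], the characteristic classes of $W$ can be read off from $L$ alone, and I will simply verify that the hypotheses and identifications needed to apply the general formulas go through verbatim in our situation.

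For $\omega_1(W)$, the first Stiefel--Whitney class of an orthogonal bundle is detected by its determinant line bundle regarded as a $\Z_2$-line bundle. Applying the Grothendieck--Riemann--Roch type identity
\[
\det \bar\pi_\ast\bigl(L \otimes E\bigr) \cong {\rm Nm}(L) \otimes \det \bar\pi_\ast(E)
\]
to the twisting bundle $E = (K_{\bar S}\otimes \bar\pi^\ast K^\ast)^{1/2}$, and using $L^2 \cong \CO$, one finds that $\det W$ agrees with ${\rm Nm}(L)$ up to the square of the auxiliary factor $\det \bar\pi_\ast(E)$. Passing to the $\Z_2$-reduction kills that square, yielding $\omega_1(W) = {\rm Nm}(L) \in H^1(\Sigma, \Z_2)$.

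For $\omega_2(W)$, I would apply the formula \eqref{algo}, which gives $\omega_2(W) = \varphi_\Sigma(W) + \varphi_\Sigma(\det W)$. By adjunction on the $p$-fold cover, $K_{\bar S} \cong \bar\pi^\ast K^{2p-1}$, hence $(K_{\bar S}\otimes \bar\pi^\ast K^\ast)^{1/2} \cong \bar\pi^\ast K^{p-1}$, and therefore
\[
W \otimes K^{1/2} \cong \bar\pi_\ast\bigl(L \otimes \bar\pi^\ast K^{p-1} \otimes \bar\pi^\ast K^{1/2}\bigr) \cong \bar\pi_\ast\bigl(L \otimes K_{\bar S}^{1/2}\bigr).
\]
Since $\bar\pi$ is finite, Leray degeneration gives $H^0(\Sigma, W\otimes K^{1/2}) \cong H^0(\bar S, L\otimes K_{\bar S}^{1/2})$, and taking dimensions mod $2$ (see \eqref{hola10}) produces $\varphi_\Sigma(W) = \varphi_{\bar S}(L)$. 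Combined with the identification $\det W = {\rm Nm}(L)$ obtained in the previous paragraph, formula \eqref{algo} then delivers $\omega_2(W) = \varphi_{\bar S}(L) + \varphi_\Sigma({\rm Nm}(L))$.

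The main obstacle, and the reason the proof leans on [classes, Theorem~1] and [ort, Theorem~8] rather than on bare cohomological calculations, is keeping track of the $\Z_2$-structures throughout, rather than merely the underlying holomorphic data: one must check that the relative-duality orthogonal structure on $\bar\pi_!(L)$ is the same orthogonal structure $W$ carries as a summand of the $SO(p+q,p)$-Higgs bundle, and that the parities entering $\varphi_\Sigma$ and $\varphi_{\bar S}$ are compatible under the pushforward isomorphism. Once these compatibilities are in place, as they were shown to be in [ort] for $q=1$, the argument above is a direct translation, with no dependence on $q$ whatsoever, since only the Cayley datum $L$ enters.
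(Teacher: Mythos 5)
Your proposal is correct and follows essentially the same route as the paper, which establishes $W=\bar\pi_!(L)$ and then invokes the mod-$2$ index formula \eqref{algo} of [classes, Theorem 1] via [ort, Theorem 8]; you have merely unpacked that citation into the two explicit computations ($\det\bar\pi_*(L\otimes E)\cong{\rm Nm}(L)\otimes\det\bar\pi_*(E)$ and $\varphi_\Sigma(W)=\varphi_{\bar S}(L)$ via $\bar\pi_*(L\otimes K_{\bar S}^{1/2})\cong W\otimes K^{1/2}$). One small wording fix: the auxiliary factor $\det\bar\pi_*(E)=\det\bigl(K^{p-1}\oplus K^{p-3}\oplus\cdots\oplus K^{-(p-1)}\bigr)$ is honestly trivial because the exponents sum to zero, rather than merely being ``a square'' killed by $\Z_2$-reduction (an order-two line bundle is generally a nonzero class in $H^1(\Sigma,\Z_2)$, so that phrasing alone would not suffice).
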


\subsection{Stiefel-Whitney classes of $V$}  Since we are working with $SO(p+q,p)$-Higgs bundles, we have $\omega_1(V) = \omega_1(W)$. Hence, all that remains is to compute $\omega_2(V)$, which we will do by computing $\omega_2(V \oplus W)$ and using:
\[
\omega_2(V \oplus W) = \omega_2(V) + \omega_2(W) + \omega_1(V) \cup \omega_1(W) = \omega_2(V) + \omega_2(W) + \omega_1(W) \cup \omega_1(W).
\]
On a compact Riemann surface the mod $2$ intersection form is alternating, so we have $\omega^2_1(W) = 0$, and thus
\[
\omega_2(V) = \omega_2(V \oplus W) + \omega_2(W).
\]

Our strategy for computing $\omega_2(V \oplus W)$ will be to reduce the problem to the $q=1$ case, which is more manageable, essentially due to the abelian structure of the fibres in this case. Recall the spectral data consists of the orthogonal line bundle $L \in Jac(\bar S)[2]$, the quadratic bundle $(V_0 , Q_0)$ and for each zero $x$ of $a_p$, an isometry $\tau_x : N_x \otimes K_x^p \to L_{r}$, where $r$ is the zero of $\xi$ lying over $x$.\\

\noindent {\bf Case $q=1$}. Consider first the $SO(p+q,p)$-Higgs bundles for  $q=1$. Then since $V_0$ has rank $1$ and determinant $K^{-p}$, we have $V_0 \cong K^{-p}$. The quadratic form $Q_0$ can be viewed as multiplication by $a_p$ via $a_p : K^{-p} \otimes K^{-p} \to \mathbb{C}$. Therefore, if $x$ is a zero of $a_p$, we have $N_x \otimes K^p_x = K_x^{-p} \otimes K^p_x = \mathbb{C}$ equipped with $Q_0/a_p = a_p/a_p = 1$, the standard inner product. An isometry $\tau_x : \mathbb{C} \to L_{r}$ is equivalent to a choice of unit vector $\tau_{x} \in L_{r}$. Given such a collection $\{ \tau_x\}$, we seek to determine $\omega_2(V)$. First note that $V$ is an extension of the form
\[
0 \to K^{-p} \to V \to V_1 \cong W \otimes K \to 0
\]
and therefore, the $SO(2p+1,\mathbb{C})$-bundle $V \oplus W$ is an extension of the form
\[
0 \to K^{-p} \to \left( V \oplus W \right) \to F \otimes K^{1/2} \to 0,
\]
where $F$ is the $Sp(2p,\mathbb{C})$-bundle $F = (W \otimes K^{1/2} ) \oplus (W \otimes K^{-1/2})$. This is a special case of the construction of $SO(2p+1,\mathbb{C})$-Higgs bundles from $Sp(2p,\mathbb{C})$ considered in \cite{langlands}. It now follows from \cite{langlands} that $\omega_2(V \oplus W)$ can be obtained from $\{ \tau_x \}$ in the following manner. 

Recall that $SO(p+1,p)$-Higgs bundles define a  double cover $\rho : S \to \bar S$ and the involution $\sigma : S \to S$ from \eqref{hola} in \autoref{Higgs}. Let $Prym(S , \bar S)$ be the corresponding Prym variety, i.e. the subvariety of $Jac(S)$ given by line bundles $A \in Jac(S)$ satisfying $\sigma^* A \cong A^*$. Viewing the orthogonal structure on $L \in Jac(\bar S)[2]$ as a map $Q_{L}: L\to L ^*$, one can see that   $\rho^* L \in Prym(S , \bar S)$. Here,   the isomorphism $\alpha : \sigma^*(\rho^* L) \to (\rho^* L)^*$ is given by
\[
\sigma^* \rho^* L \cong \rho^* L \buildrel \rho^*(Q_{L}) \over \longrightarrow \rho^* L^* = (\rho^* L)^*,
\]
where the first isomorphism is given $\rho \circ \sigma = \rho$. Suppose that $N \in Prym(S , \bar S)$ satisfies $N^2 \cong \rho^* L$ and choose a specific isomorphism $\mu : N^2 \to \rho^* L$.  Then, there is an isomorphism $\nu : \sigma^* N \to N^*$ which we can assume is chosen so there is   a commutative diagram
\[
\xymatrix{
\sigma^* \rho^* L \ar[r]^-\alpha & \rho^* L^* \\
\sigma^* N^2 \ar[u]^-{\sigma^* \mu} \ar[r]^-{\nu^{\otimes 2}} & N^{-2} \ar[u]_-{\mu^{\otimes (-1)}}
}
\]
For a given $\mu$, this uniquely determines $\nu$ up to an overall sign. Moreover, $\nu$ can be viewed as a section of $N^* \sigma^*(N^*)$.

 Let $r \in \bar S$ be a zero of $\xi$ and $r'$ the unique point of $r' \in S$ lying over $r$. These are the ramification points of $S \to \bar S$ and they are also the fixed points of $\sigma : S \to S$. Thus $\nu^{-1}_{r'} \in N_{r'} N_{\sigma(r')} = N^2_{r'}$. Then if we set $\tau_x = \mu_{r'}( \nu^{-1}_{r'} ) \in (\rho^* L)_{r'} = L_r$, the above commutative diagram implies $Q_{L}( \tau_x , \tau_x ) =1$, so $\{ \tau_x \}$ is a choice of extension data. It follows from \cite[\textsection 4.3]{langlands} that $\omega_2(V \oplus W) = 0$ if and only if the extension data $\{ \tau_x \}$ arises in this way for some $N \in Prym(S , \bar S)$. This completely determines $\omega_2(V \oplus W)$ in terms of $N \in {\rm Prym}(S , \bar S)$ and the extension data $\{ \tau_x \}$.\\

\noindent {\bf The $q>1$ case}. 
The following proposition allows us to reduce the study of characteristic classes of $SO(p+q,p)$-Higgs bundles  to the $q = 1$ situation:
\begin{Proposition}\label{prop:split}
There exists a $\mathcal{C}^\infty$-isomorphism of vector bundles
\[
V_0 \cong V'_0 \oplus K^{-p},
\]
where $V'_0$ is a rank $q-1$ orthogonal vector bundle, such that $Q_0$ is the orthogonal direct sum of $V'_0$ with $K^{-p}$ equipped with the bilinear form $a_p : K^{-p} \otimes K^{-p} \to \mathbb{C}$.
\end{Proposition}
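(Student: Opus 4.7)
The plan is to realize the splitting of $(V_0, Q_0)$ at the level of the equivariant orthogonal bundle $(M, Q_M, \tilde{\sigma}_C)$ on the double cover $\pi_C : C \to \Sigma$ provided by Theorem 3.8, and then to take invariant direct images. Over each ramification point $r \in R$, the type $(q-1,1)$ condition yields a canonical $Q_M$-orthogonal decomposition $M_r = M_r^+ \oplus M_r^-$ into the $\pm 1$-eigenspaces of $\tilde{\sigma}_C$, of ranks $q-1$ and $1$ respectively. The key idea is to globalize this pointwise decomposition smoothly and equivariantly.

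The main step is to extend the fiberwise decomposition at $R$ to a global smooth $\tilde{\sigma}_C$-equivariant orthogonal splitting $M = M_+ \oplus M_-$ with $M_{\pm}|_r = M_r^{\pm}$. It suffices to construct a smooth $\tilde{\sigma}_C$-equivariant line subbundle $M_- \subset M$ with the prescribed fibers at $R$; one then sets $M_+ := M_-^{\perp}$, which is well defined by the nondegeneracy of $Q_M$. Equivalently, one needs a smooth $\tilde{\sigma}_C$-equivariant section of the projective bundle $\mathbb{P}(M) \to C$ sending $r$ to the point $[M_r^-]$. For $q \geq 2$ the fiber $\mathbb{P}^{q-1}$ is simply connected, so standard obstruction theory on the real $2$-manifold $C$ provides such a smooth section: the obstructions live in $H^{k+1}(C, R; \pi_k(\mathbb{P}^{q-1}))$, which vanish for $k=0,1$ by connectedness and simple-connectedness of the fiber and for $k\geq 2$ for dimensional reasons. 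Equivariance is imposed by first constructing the section in a $\tilde{\sigma}_C$-invariant tubular neighborhood of $R$ (where the prescribed values are already fixed by the lifted action on $\mathbb{P}(M)$) and then extending over a fundamental domain on the complement, on which $\tilde{\sigma}_C$ acts freely. The case $q=1$ is trivial with $M_- = M$.

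Taking invariant direct images produces a smooth orthogonal splitting $(V_0, Q_0) \cong (V'_0, Q'_0) \oplus (V''_0, Q''_0)$ with $V'_0$ of rank $q-1$ and $V''_0$ of rank $1$. The local model computed in the proof of Proposition 3.4 shows that, near a ramification point $r$ lying over $x \in D$, a local $\tilde{\sigma}_C$-anti-invariant unit frame $e$ of $M_-$ gives the local frame $\zeta e$ for $V''_0$ satisfying $Q''_0(\zeta e, \zeta e) = \zeta^2 = a_p$. Globally, $Q''_0$ and $a_p$ have equal zero divisors, hence induce an identification $(V''_0)^{\otimes 2} \cong K^{-2p}$, which forces $\deg V''_0 = -p(2g-2) = \deg K^{-p}$. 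Smooth complex line bundles on $\Sigma$ being classified by their Chern class, this yields a $\mathcal{C}^\infty$-isomorphism $V''_0 \cong K^{-p}$.

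The final step is to refine the smooth isomorphism $V''_0 \cong K^{-p}$ so that $Q''_0$ is carried precisely to $a_p$. Under any such smooth isomorphism the form $Q''_0$ becomes $\phi \cdot a_p$ for some nowhere-zero smooth function $\phi : \Sigma \to \mathbb{C}^*$, and modifying the isomorphism by a smooth nonvanishing $g$ rescales $\phi$ to $g^2 \phi$. The main obstacle is precisely this final matching of bilinear forms, which reduces to extracting a smooth square root of $\phi$; this is arranged by using the freedom remaining in the choice of the smooth lift $M_-$ so that the induced smooth trivialization of $(V''_0 \otimes K^p)^{\otimes 2}$ coincides with the one determined by $Q''_0 / a_p$. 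Once this normalization is achieved, the orthogonal splitting $V_0 \cong V'_0 \oplus K^{-p}$ with the stated bilinear form is complete.
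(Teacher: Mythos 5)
Your strategy is genuinely different from the paper's: you work upstairs on $C$ with the equivariant bundle $M$ and try to split off an equivariant line subbundle, whereas the paper works directly on $\Sigma$ and constructs a smooth section $e$ of $V_0\otimes K^p$ with $Q_0(e,e)=a_p$ over $\Sigma^*=\Sigma\setminus\cup_x D_x$, exploiting the fact that $\Sigma^*$ retracts onto a wedge of circles so that only \emph{connectedness} of the relevant fibre is needed. Your version has a genuine gap at its central step. A section of $\mathbb{P}(M)\to C$ does produce a line subbundle $M_-$, but setting $M_+:=M_-^\perp$ yields $M=M_+\oplus M_-$ only at points where $Q_M|_{M_-}$ is nondegenerate, i.e.\ where the line $M_-$ is non-isotropic; nondegeneracy of $Q_M$ on all of $M$ does not guarantee this, and without it the ``orthogonal splitting'' fails. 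What you actually need is a section of the complement of the isotropic quadric bundle inside $\mathbb{P}(M)$, whose fibre $\mathbb{P}^{q-1}\setminus\{Q=0\}$ is connected but \emph{not} simply connected ($\pi_1\cong\mathbb{Z}/2$ for $q\ge 3$; for $q=2$ the fibre is $\mathbb{C}^*$ with $\pi_1\cong\mathbb{Z}$). Over the closed surface $C$ rel the finite set $R$ the primary obstruction then lives in $H^2(C,R;\pi_1(\mathrm{fibre}))$, which does not vanish for formal reasons; equivalently, the isotropic quadric bundle has real codimension $2$ in $\mathbb{P}(M)$, so a section of $\mathbb{P}(M)$ meets it with a well-defined intersection number that you have not shown to be zero (for $q=2$ its vanishing is essentially the statement $\deg N=0$ for $M=N\oplus N^*$). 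This is precisely the difficulty the paper's proof is engineered to avoid: it excises discs around $D$, prescribes the section holomorphically there, and extends over a $1$-complex, where only $\pi_0$ of the fibre matters.

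There is a second gap at the end. After obtaining a smooth isomorphism $V_0''\cong K^{-p}$, the discrepancy $\phi=Q_0''/a_p$ is a nowhere-zero function, and extracting its square root is obstructed by the class of $\phi$ in $H^1(\Sigma;\mathbb{Z}/2)$; changing the smooth isomorphism only alters this class by an even element, so the obstruction is an invariant of the pair $(V_0'',Q_0'')$. You assert it can be killed ``using the freedom remaining in the choice of $M_-$'' but give no argument, and this is exactly the nontrivial content of the normalization. The paper never faces this issue because its section is constructed inside the affine quadric $\{Q_0(\cdot,\cdot)=a_p\}\subset V_0\otimes K^p$ from the outset, so the isometry $K^{-p}\to E$ comes for free. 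To salvage the double-cover route you would need to (i) verify the vanishing of the intersection number with the isotropic quadric, presumably using $\det(V_0)\cong K^{-p}$, and (ii) control the $\mathbb{Z}/2$ normalization class, at which point the argument is no simpler than the one on $\Sigma$.
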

\begin{proof}
In the case $q = 1$ we have already seen that $V_0 \cong K^{-p}$, so let us assume $q \ge 2$. Choose disjoint open discs $D_x$ around each zero $x$ of $a_p$ such that on each $D_x$ there is a local holomorphic coordinate $z$ centred at $x$ with $a_p = z (dz)^{2p}$. We can further assume that over each $D_x$, there is a holomorphic frame $e_1 , \dots , e_q$ of $V_0$ with $Q_0(e_i , e_j) = \delta_{ij}$ for $(i,j) \neq (1,1)$ and $Q_0(e_1 , e_1) = z$. Set $e' = e_1 \otimes (dz)^p$. Then $e'$ is a section of $V_0 \otimes K^p$ defined on each $D_x$ and satisfying $Q_0( e' , e') = a_p$. Moreover, the space $\Sigma^* = \Sigma \setminus \cup_x D_x$ is homotopy equivalent to a wedge of circles, and   $(V_0 \otimes K^p , Q_0/a_p)$ is a rank $q$ orthogonal bundle on $\Sigma^*$. Choosing a reduction of structure of $V_0 \otimes K^p$ to the maximal compact $O(q) \subset O(q, \mathbb{C})$, since $q \ge 2$, it follows that the fibres of the unit sphere bundle of $V_0 \otimes K^p$ are connected. Hence, by obstruction theory we can find a smooth section $e$ of $V_0 \otimes K^p$ on $\Sigma^*$ with $Q_0(e,e) = a_p$. For each $x$, let $D'_x \subset D_x$ be a smaller open disc around $x$ so that $\bar D_x \setminus D'_x$ is an annulus. Then since $q \ge 2$ we can smoothly extend $e$ over the annulus so that $e |_{\partial D'_x} = e'$, and extend $e$ over $D'_x$ to equal $e'$. Let $E \subset V_0$ be given by $E = e K^{-p} \subset V_0$, and consider $V'_0$ be defined as the orthogonal complement of $E$ away from the zeros of $a_p$ and $V'_0|_{D'_x} = {\rm span}(e_2 , \dots , e_q)$. Then we have an orthogonal direct sum $V_0 = V'_0 \oplus E$. Note that $e : K^{-p} \to E$ gives an isomorphism $E \cong K^{-p}$ and since $Q_0(e , e) =a_p$, we can identify $E$ with $K^{-p}$ equipped with the bilinear form $a_p : K^{-p} \otimes K^{-p} \to \mathbb{C}$.
\end{proof}

Recall from \autoref{section4} that the vector bundle $V$ can be reconstructed as an extension of $V_0^*$ by $V_1^*$ and that the extension class has the form $\delta(i)$, for some $i \in H^0( D , Hom( N , V_1^*))$. 
  In general,   a decomposition $V_0 \cong V'_0 \oplus K^{-p}$ as in \autoref{prop:split} can only be done smoothly and not holomorphically. However, examining the proof of \autoref{prop:split}, we see that the isomorphism $V_0 \cong V'_0 \oplus K^{-p}$ can be chosen so that it is holomorphic in a neighbourhood of each zero of $a_p$. The choice of such an isomorphism, $\varphi : V_0 \to V'_0 \oplus K^{-p}$ in particular induces an identification $\varphi : N_x \to K_x^{-p}$, where $N_x = Ker(Q_0)_x \subseteq (V_0)_x$ as before. Hence, we can view $i$ as $\varphi(i) \in H^0( D , Hom( K^{-p} , V_1^* ) )$. Replacing $(V_0 , Q_0)$ by $(K^{-p} , a_p)$, we may construct from $i$ an extension of $V_1$ by $K^{-p}$:
\[
0 \to K^{-p} \to V'' \to V_1 \to 0.
\]

\begin{Theorem}\label{classesT}\label{teorema2}
Choose a smooth splitting $V_0 \cong V'_0 \oplus K^{-p}$ as in \autoref{prop:split} and use this to identify $N_x$ with $K^{-p}_x$. The extension data $\tau_x : N_x \otimes K^p_x \to L_r$ for $V$ is then identified with a collection $\{ \tau_x \}$ of unit vectors $\tau_x \in L_r$. Let $\delta = 0$ if $\{ \tau_x \}$ is induced from some $N \in Prym(S , \bar S)$ with $N^2 = \rho^* L,$ and $\delta = 1$ otherwise. Then
\begin{equation*}
\begin{aligned} 
\omega_1(W)&={\rm Nm}(L)  \in H^1(\Sigma, \Z_2), \\
\omega_2(W)&= \varphi_{  \bar S}(L)+\varphi_\Sigma({\rm Nm}(L)) \in \Z_2, \\
\omega_2(V) &= \varphi_{  \bar S}(L)+\varphi_\Sigma({\rm Nm}(L)) + \omega_2(V'_0) + \delta  \in \Z_2.
\end{aligned}
\end{equation*}
where $\varphi_\Sigma$ and $\varphi_{\bar S}$ are the analytic mod 2 indices, and ${\rm Nm}  : Jac(\bar S )[2] \to Jac(\Sigma)[2]$ the Norm map.
\end{Theorem}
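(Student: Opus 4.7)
The formulas for $\omega_1(W)$ and $\omega_2(W)$ were already established in the Proposition immediately preceding, so the only content is the formula for $\omega_2(V)$. My plan is to reduce this to the $q=1$ case, which has been analyzed in \cite{langlands} and recalled in the discussion of the theorem. Since on a compact Riemann surface the mod $2$ cup product is alternating, we have $\omega_1(W)^2=0$, and hence
\[
\omega_2(V) \;=\; \omega_2(V\oplus W) + \omega_2(W),
\]
so the task reduces to computing $\omega_2(V\oplus W)$.

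The first main step is to use the smooth orthogonal splitting $V_0 \cong V'_0\oplus K^{-p}$ from \autoref{prop:split} to split $V$ as a smooth vector bundle. The key point is that this splitting may be chosen holomorphic in a neighbourhood of each zero of $a_p$, so it respects the local data $N_x\subset (V_0)_x$: under $\varphi$, $N_x$ is identified with $K^{-p}_x$, and the extension class $\delta(i)\in H^1(\Sigma,\mathrm{Hom}(V_0^*,V_1^*))$ splits as the class coming from $\mathrm{Hom}(K^p,V_1^*)$ plus the trivial class on $V'_0$. Therefore, smoothly,
\[
V \;\cong\; V'' \oplus V'_0,\qquad V\oplus W \;\cong\; (V''\oplus W)\oplus V'_0,
\]
where $V''$ is precisely the extension of $V_1$ by $K^{-p}$ obtained from the identified data $\varphi(i)\in H^0(D,\mathrm{Hom}(K^{-p},V_1^*))$; in other words, $(V''\oplus W,\Phi)$ is the $SO(p+1,p)$-Higgs bundle reconstructed from the Cayley datum $L$ together with the collection of unit vectors $\{\tau_x\}\subset L_r$.

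The second step is to apply the Whitney sum formula. Since $\det(V_0)\cong K^{-p}$ and the splitting identifies the $K^{-p}$ summand with the orthogonal complement of $V'_0$, we have $\det(V'_0)\cong\mathcal{O}$, so $V'_0$ is orientable as a smooth real orthogonal bundle and $\omega_1(V'_0)=0$. The Whitney formula then collapses to
\[
\omega_2(V\oplus W) \;=\; \omega_2(V''\oplus W) + \omega_2(V'_0).
\]
The third step invokes the known $q=1$ computation (recalled just before the theorem statement): for the $SO(p+1,p)$-Higgs bundle $(V''\oplus W,\Phi)$ with Cayley data $L$ and unit vectors $\{\tau_x\}$, one has $\omega_2(V''\oplus W)=0$ precisely when $\{\tau_x\}$ arises from some $N\in\mathrm{Prym}(S,\bar S)$ with $N^2\cong \rho^*L$, i.e.\ $\omega_2(V''\oplus W)=\delta$. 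Combining with the formula for $\omega_2(W)$ from the preceding Proposition yields the claimed expression.

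The main obstacle I anticipate is the first step: justifying that the smooth splitting $V_0\cong V'_0\oplus K^{-p}$ can be promoted to a smooth orthogonal decomposition $V\oplus W\cong (V''\oplus W)\oplus V'_0$ in a way that is compatible with the extension construction in \autoref{section4}. Concretely, one must check that when $\varphi$ is chosen holomorphic near each $x\in D$ (as provided by the proof of \autoref{prop:split}), the meromorphic-section description of $V$ from \autoref{prop:merogamma} cleanly decouples into a direct sum of the $V''$-extension for $K^{-p}$ and the trivial $V'_0$ factor; the value of $\tau_x$ on $N_x=K^{-p}_x$ is preserved by this identification, and the orthogonal structure and determinant-trivialisation of $V'_0$ match the corresponding data on $V'_0\subset V_0$. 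Once this bookkeeping is verified, the remaining two steps are formal.
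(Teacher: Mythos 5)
Your proposal follows essentially the same route as the paper's proof: the smooth orthogonal splitting $V \cong V'_0 \oplus V''$ coming from \autoref{prop:split}, the Whitney sum formula, and the reduction to the $q=1$ computation of $\omega_2(V'' \oplus W)$ via the extension $0 \to K^{-p} \to (V'' \oplus W) \to F \otimes K^{1/2} \to 0$, so that $\omega_2(V''\oplus W)=\delta$ by the result of \cite{langlands} recalled before the theorem. The one weak link is your justification for dropping the cross term $\omega_1(V'_0) \cup \omega_1(V'' \oplus W)$: smooth triviality of the complex line bundle $\det(V'_0)$ does not by itself control the $\mathbb{Z}_2$-determinant class $\omega_1(V'_0)$ (a $2$-torsion line bundle is always smoothly trivial); the paper instead notes that $\det(V'') \cong \det(W)$ as holomorphic line bundles, so $\omega_1(V'' \oplus W) = 0$ and the cross term vanishes for that reason --- equivalently, one gets $\omega_1(V'_0) = 0$ from $\omega_1(V) = \omega_1(W) = \omega_1(V'')$.
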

\begin{proof}
From the constructions of $V$ and $V''$ via $i$ in \autoref{section4}, one sees that we have a $\mathcal{C}^\infty$-isomorphism of orthogonal vector bundles
\[
V \cong V'_0 \oplus V''.
\]
Note that $\det(V'') = K^{-p} \det(V_1) = K^{-p} \otimes K^p \otimes \det(W) = \det(W)$ (at the level of holomorphic bundles), so $\omega_1(V'' \oplus W) = 0$. Therefore
\begin{equation*}
\begin{aligned}
\omega_2(V \oplus W) &= \omega_2( V'_0 \oplus (V'' \oplus W) ) \\
&= \omega_2(V'_0) + \omega_2( V'' \oplus W) + \omega_1(V'_0) \cup \omega_1(V'' \oplus W) \\
&= \omega_2(V'_0) + \omega_2(V'' \oplus W).
\end{aligned}
\end{equation*}
But note that $V'' \oplus W$ is an extension
$
0 \to K^{-p} \to (V'' \oplus W) \to (V_1 \oplus W) = F \otimes K^{1/2} \to 0,
$
where $F$ is the $Sp(2p,\mathbb{C})$-bundle $F = (W \otimes K^{1/2} ) \oplus (W \otimes K^{-1/2})$, and therefore  we have reduced the computation to the $q=1$ case. Thus $\omega_2(V'' \oplus W)$ is computed from the extension data $\{ \tau_x \}$ as described above. Putting it all together, the theorem follows. 
\end{proof}

\section{Abelianization for split and quasi-split real forms}\label{section6}\label{quasi-split}

We have seen that the spectral data describing regular fibres of the moduli space of $SO(p+q,p)$-Higgs bundles consists of an abelian part given by the Cayley type correspondence and a non-abelian part, given by the Langlands type correspondence. In what follows, we will consider the special cases of the split real forms $SO(p+1,p)$ and the quasi-split real forms $SO(p+2,p)$. In these cases we will see that non-abelian data can be abelianized, providing a novel description of the intersection of the moduli spaces of $SO(p+q,p)$-Higgs bundles with the Hitchin fibration. 

We first consider the case of the quasi split real forms $SO(p+2,p)$, which are not split in \autoref{q2} and then consider the split real forms $SO(p+1,p)$ in \autoref{q1}. Finally in   \autoref{extra-comp} we describe from a geometric perspective the extra components appearing in the $SO(p+1,p)$ moduli space, which were identified via Morse theory computations by Collier in his PhD thesis \cite{brian2}. These components can be seen from the spectral data, and we will comment on extra components that should appear for any $q>1$. 

   \subsection{Quasi-split real forms which are not split}\label{q2}
   Here we consider the case $q=2$, that is, Higgs bundles for the quasi-split real form $SO(p+2,p)$. From the study of Cameral covers, it is known that abelian data should exist describing Higgs bundles for quasi-split real forms \cite{peon}. Here we will use our spectral data constructions to provide a concrete description of the abelian structure of the fibres, completing the explicit description of abelian data for all Higgs bundles coming from quasi split real forms (the case of split real forms was described in \cite{thesis}, and the cases of $U(p,p)$ and $U(p+1,p)$ in \cite{umm} and \cite{peon2} respectively).

Under the assumptions of \autoref{teorema1}, $SO(p+2,p)$-Higgs bundles can be parametrised in the fibres of the $SO(2p+2,\C)$ Hitchin fibration as triples $(L, M , \tau )$ where $L\in Jac(\bar S)[2]$ is an orthogonal line bundle on $\bar S$, $M$ is an equivariant $SO(2,\mathbb{C})$ on the $2$-fold cover $C$ of type $(1,1)$ over each fixed point of $\sigma_C$ such that all invariant isotropic subbundles have degree $\le 0$ and $\tau = \{ \tau_x \}$ is the extension data. 

\begin{Theorem}\label{teorema4}
The intersection of the moduli space of $SO(p+2,p)$-Higgs bundles with a fibre of the $SO(2p+2,\C)$ Hitchin fibration over a point  defining the spectral curve  
\[
S:= \{  \eta^{2p}+a_1\eta^{2p-2}+\ldots + a_{p}=0\}
\]
is given by triples $(L, N , \tau )$, where $L\in Jac(\bar S)[2]$, $N \in Prym(C , \Sigma)$ and $\tau = \{ \tau_x \}$ is the extension data, considered modulo $\tau \sim  -\tau$. There is a natural abelian group structure on such triples, given by 
\[
(L, N , \tau) ( L' , N' , \tau') = (L\otimes L' , N \otimes N' , \tau \otimes \tau' ).
\]
As a group, this fibre is isomorphic to $Prym(C , \Sigma) \times (\mathbb{Z}_2)^{(4p^2+2p)(g-1)+1}$. In particular, the fibre is $2^{(4p^2+2p)(g-1)+1}$ copies of $Prym(C,\Sigma)$.
\end{Theorem}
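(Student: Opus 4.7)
The plan is to specialise Theorem~\ref{teorema1} to $q=2$ by making the moduli stack $\mathcal{M}_C(1,1)$ of equivariant rank $2$ orthogonal bundles on $C$ of type $(1,1)$ explicitly abelian: I would identify it with the Prym variety $Prym(C,\Sigma)$. Once this identification is established, each of the three ingredients in the triple $(L,N,\tau)$ lies in an abelian group, the operation is pointwise tensor product, and the only remaining task is to verify the group isomorphism by a rank count.

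First I would analyse $\mathcal{M}_C(1,1)$. Any oriented rank $2$ orthogonal bundle with trivial determinant decomposes as $M\cong N\oplus N^{-1}$, where $N$ is a maximal isotropic line subbundle and the pairing is hyperbolic; orientation pins down the ordering of the two isotropic lines, since swapping them reverses the sign in $\Lambda^2 M$. An equivariant structure $\tilde\sigma_C$ preserving $Q_M$ either respects the splitting or swaps the two summands. In the diagonal case, the $\pm 1$-eigenspaces of $\tilde\sigma_C$ at a ramification point $r$ exhaust $N_r$ or $N_r^{-1}$, producing type $(2,0)$ or $(0,2)$; only the swap yields type $(1,1)$. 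The swap forces $\sigma_C^*N\cong N^{-1}$, i.e. $N\in Prym(C,\Sigma)$. The reverse construction (given $N\in Prym$, choose an isomorphism $\sigma_C^*N\to N^{-1}$, build the off-diagonal involution on $N\oplus N^{-1}$, and use the canonical pairing-induced orientation) shows the correspondence is a bijection $\mathcal{M}_C(1,1)\simeq Prym(C,\Sigma)$. I would then check that the semistability condition in (II) is automatic: for generic $N\in Prym$ no $\tilde\sigma_C$-invariant line subbundle exists, and on special loci any invariant isotropic line subbundle has degree zero since $N$ lies in the Prym.

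Next I would enumerate the extension data. The section $a_p\in H^0(\Sigma,K^{2p})$ has $2p(2g-2)=4p(g-1)$ simple zeros, and at each zero $x$ the spaces $M_r^-$ and $L_{r'}$ are one-dimensional orthogonal spaces, so $\tau_x$ is a sign. Modding the $2^{4p(g-1)}$ collective choices by the equivalence $\tau\sim-\tau$ of Theorem~\ref{teorema1} (the only residual ambiguity once $L$ and $M$ are fixed, since $\mathrm{Aut}(L)=\{\pm 1\}$ and $\mathrm{Aut}(M)$ acts on $\tau$ by an overall sign) leaves $(\Z_2)^{4p(g-1)-1}$.

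Finally, to verify the group structure and rank, I would check that pointwise tensor product preserves each condition: $(L\otimes L')^2\cong\CO$, $\sigma_C^*(N\otimes N')\cong (N\otimes N')^{-1}$, and tensor products of isometries of one-dimensional orthogonal spaces are again isometries. The rank count is then straightforward: $|Jac(\bar S)[2]|=2^{2g_{\bar S}}=2^{(4p^2-2p)(g-1)+2}$ and the extension data contribute $(\Z_2)^{4p(g-1)-1}$, giving a total of $(4p^2-2p)(g-1)+2+4p(g-1)-1=(4p^2+2p)(g-1)+1$ copies of $\Z_2$, alongside the connected factor $Prym(C,\Sigma)$. The main obstacle is the identification $\mathcal{M}_C(1,1)\simeq Prym(C,\Sigma)$ itself: in particular, confirming that the orientation data absorbs the $N\leftrightarrow N^{-1}$ ambiguity exactly, rather than contributing an extra $\Z_2$ factor. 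The dimension equality $\dim Prym(C,\Sigma)=(2p+1)(g-1)=\dim\mathcal{M}_C(1,1)$ coming from Proposition~\ref{Lef} provides a useful consistency check.
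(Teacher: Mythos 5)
Your proposal follows essentially the same route as the paper: decompose $M = N\oplus N^*$, observe that type $(1,1)$ forces $\tilde\sigma_C$ to swap the two isotropic summands so that $N\in Prym(C,\Sigma)$, note that this swap also rules out any invariant isotropic subbundle (so semistability is automatic and your ``generic $N$'' hedge is unnecessary), and then count $2g_{\bar S}+4p(g-1)-1$ copies of $\Z_2$ exactly as the paper does. The only step you omit is why the resulting extension of $(\Z_2)^{(4p^2+2p)(g-1)+1}$ by $Prym(C,\Sigma)$ is actually a direct product rather than a nontrivial extension; the paper closes this with the one-line observation that $Prym(C,\Sigma)$ is a divisible group, so the extension splits.
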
  
\begin{proof}
Since in the above parametrisation of $SO(p+2,p)$-Higgs bundles in \autoref{teorema1} as triples $(L, M , \tau )$, the bundle $M$ is an $SO(2,\mathbb{C})$-bundle, we have
\[
M = N \oplus N^*
\]
for some line bundle $N$ on $C$. Let $\tilde{\sigma}_C : M \to M$ denote the lift of $\sigma_C$ to $M$. Since $N,N^*$ are the only isotropic sub-bundles of $M$, we must have either $\tilde{\sigma}_C( N ) = N$ or $\tilde{\sigma}_C(N) = N^*$. However, the fact that $M$ is assumed to have type $(1,1)$ over each fixed point implies that $\sigma$ must exchange the isotropics, so $\tilde{\sigma}_C(N) = N^*$. In particular, we have $\sigma_C^*(N) \cong N^*$ so that $N$ belongs to the Prym variety $Prym(C,\Sigma)$ of the cover $C \to \Sigma$. 

Conversely for any $N \in Prym( C , \Sigma)$, we obtain a rank $2$ equivariant orthogonal bundle $M = N \oplus N^*$. Notice that since $\tilde{\sigma}_C$ exchanges $N$ and $N^*$, there are no invariant isotropic sub-bundles of $M$, so $M$ satisfies the stability condition. Note also that $N$ and $N^*$ are distinguished from one another by the choice of an orientation on $M$ (swapping $N$ and $N^*$ reverses the orientation on $M$). Fix an isomorphism $\varphi : \sigma^* N \to N^*$, and let $r \in C$ be a fixed point of $\sigma_C$. Then over $r$, the map $\varphi$ induces an isomorphism $\varphi_r : N_r \to N_r^*$, i.e. an orthogonal structure on $N_r$.

 The $-1$-eigenspace $M_r^- \subset N_r \oplus N_r^*$ is given by $\{ ( v , -\varphi_r(v) ) \; | \; v \in N_r \}$, and in this way we get an identification $M^-_r \cong N_r$ as orthogonal spaces. The extension data $\{ \tau_x \}$ can now be viewed as a collection of isometries $\tau_x : N_r \to L_{r'}$ ($r'$ is the corresponding point in $\bar S$). Notice also that the only orientation preserving isometries of $M = N \oplus N^*$ are given by $(a,b) \mapsto (ca , c^{-1}b)$ where $c \in \mathbb{C}^*$ is a constant. Thus two triples $(L, N , \tau)  , (L' , N' , \tau')$ define isomorphic Higgs bundles if and only if $L' \cong L$, $N' \cong N$ and $\tau' = \pm \tau$. To summarise, we have:

Only the last statement about the group structure of the fibres remains to be shown. First of all note that $Jac(\bar S)[2] \cong (\mathbb{Z}_2)^{2g_{\bar S}}$ and $g_{\bar S} = (2p^2-p)(g-1)+1$. Next, note that $a_p$ has $deg(K^{2p}) = 4p(g-1)$ zeros, hence the possible choices of $\tau$ for given $L,N$ forms the group $(\mathbb{Z}_2)^{4p(g-1)-1}$, where the $-1$ comes from identifying $\tau$ and $-\tau$. Thus the group of components of the fibre is isomorphic to $(\mathbb{Z}_2)^{2g_{\bar S} + 4p(g-1) - 1} = (\mathbb{Z}_2)^{(4p^2+2p)(g-1)+1}$. The group structure of the fibre is then an extension of $(\mathbb{Z}_2)^{(4p^2+2p)(g-1)+1}$ by $Prym(C , \Sigma)$. The extension must be split, since $Prym(C , \Sigma)$ is a divisible group.
\end{proof}

\subsection{Split real forms}\label{q1}
  
 The group $SO(p+1,p)$ is a split real form and as such the spectral data for the corresponding Higgs bundles can be described using the techniques developed in this paper, as well as by considering them as 2-torsion points in the complex Hitchin fibration \cite{ort}. In either case, it follows directly that we have abelian spectral data.  
 
 The techniques developed in the previous sections have allowed us to understand $SO(p+1,p)$-Higgs bundles $(V\oplus W, \Phi)$ whose Higgs filed has characteristic polynomial of the form  
 \begin{eqnarray}  \det(\Phi-{\rm Id}\eta) = \eta(\eta^{2p}+a_1\eta^{2p-2}+\ldots + a_{p}).\label{curvesplit} \end{eqnarray}
Let $D \subset \Sigma$ be the divisor of zeros of $a_p$,  and $D_{\bar S} \subset \bar S$ the divisor in $\bar S$ given by the zeros of the tautological section $\xi$ of $K^2$. For each point $x \in D$ there is a unique point $x' \in D_{\bar S}$ lying over $x$. In particular, there is a naturally defined bijection between the points of these divisors. 

Recall that $W = \pi_*(L)\otimes K^{p-1}$, where $L$ is an orthogonal line bundle on $S$. The tautological section $\xi=\eta^2 : L\rightarrow L\otimes K^2$ pushes down to give the map $\beta_F : W \rightarrow W \otimes K^2$ and the orthogonal structure on $L$ induces by relative duality an orthogonal structure $Q_W$ on $W$. Moreover, in this case the quadratic bundle $(V_0, Q_0)$ is simply $(K^{-p}, a_{p})$, where $a_p$ is viewed as a bilinear form $a_{p}:  K^{-p} \otimes K^{-p} \rightarrow \mathcal{O}$. Then, from \autoref{teorema1}, the remaining data required to construct an $SO(p+1,p)$-Higgs bundle is for each $x \in D$ a choice of unit vector $e_x \in L_{x'}$. 

\begin{Corollary} The spectral data for $SO(p + 1, p)$ consists of:
\begin{itemize}
\item An orthogonal line bundle $L$ on $\bar S$,
\item For each $x\in D$, a choice of unit vector $e_x \in L_{x'}$.
\end{itemize}
Two such pairs $( L , \{ e_x \} ) , ( L' , \{ e'_x \} )$ define isomorphic $SO(p+1,p)$-Higgs bundles if and only if $L \cong L'$ as orthogonal line bundles by some isomorphism $\varphi : L \to L'$ such that $\varphi( e_x ) = \pm e'_x$. 
\end{Corollary}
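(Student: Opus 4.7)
My plan is to deduce the corollary as a direct specialization of \autoref{teorema1} to the case $q=1$, and show that the non-abelian Langlands data collapses to something trivial in this setting. First I would observe that for $q=1$, an equivariant rank $q$ orthogonal bundle $(M,Q_M,\tilde\sigma_C)$ on $C$ of type $(q-1,1) = (0,1)$ at each ramification point is essentially determined. Indeed, $M$ is a rank $1$ orthogonal bundle on $C$, i.e.\ a square root of the trivial bundle together with the involution $\tilde\sigma_C$, and the type condition $(0,1)$ says that $\tilde\sigma_C$ acts as $-1$ on each ramification fibre $M_r$. Invariant direct image then gives $V_0 \cong K^{-p}$ with $Q_0 = a_p$, consistent with the discussion preceding the corollary.

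Next I would check that once the equivariant structure and an orientation are fixed, the pair $(M,Q_M,\tilde\sigma_C)$ is unique up to isomorphism. Because the type is $(0,1)$ everywhere, $\tilde\sigma_C$ is completely determined by the sign on each ramification fibre, and the only non-trivial automorphisms of a rank $1$ oriented orthogonal bundle are $\pm 1$. The semistability condition in \autoref{teorema1}(II) is vacuous here, since $M$ has rank $1$ and the only invariant isotropic subbundle is the zero bundle. Consequently, the Langlands factor in the spectral data collapses to a single point, and the whole content of \autoref{teorema1} for $q=1$ is carried by $L \in \mathrm{Jac}(\bar S)[2]$ together with the extension data.

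For the extension data, since $M_r^- = M_r$ is a $1$-dimensional orthogonal space, an isometry $\tau_x : M_r^- \to L_{r'}$ is exactly the choice of a unit vector $e_x \in L_{r'}$, obtained as the image of a unit vector in $M_r^-$ (the two possible unit vectors in $M_r^-$ correspond to $\pm e_x$, but these are absorbed into the overall $\pm$-ambiguity below). Using the identification $r' = x'$ with the unique point of $D_{\bar S}$ lying over $x \in D$ completes the translation from the $\tau_x$ of \autoref{teorema1}(III) to the unit vectors $e_x \in L_{x'}$ in the statement.

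Finally, for the isomorphism condition, I would apply the last clause of \autoref{teorema1}: two triples $(L,M,\tau)$ and $(L',M',\tau')$ define isomorphic Higgs bundles precisely when there are orthogonal isomorphisms $\psi : L \to L'$ and $\varphi : M \to M'$ with $\tau' = \pm \psi\circ\tau\circ\varphi^{-1}$. Since in our case $\varphi$ is $\pm 1$ on each ramification fibre and the overall sign ambiguity in $\tau'$ subsumes the sign of $\varphi$, the condition reduces to the existence of an orthogonal isomorphism $\psi : L \to L'$ with $\psi(e_x) = \pm e'_x$ for all $x \in D$, as stated. The main (and only mildly subtle) point is book-keeping the various sign ambiguities so that they all collapse into the single $\pm$ in the isomorphism condition; everything else is a direct specialization.
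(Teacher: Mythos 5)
Your proposal is correct and follows essentially the same route as the paper: the corollary is obtained by specializing \autoref{teorema1} to $q=1$, noting that the oriented rank-one equivariant bundle $M$ is forced to be $(\mathcal{O},-\sigma_C^*)$ so that $(V_0,Q_0)=(K^{-p},a_p)$ and the Langlands factor collapses, leaving only $L$ and the isometries $\tau_x$, which are precisely unit vectors $e_x\in L_{x'}$. Your sign book-keeping (the global $\pm$ from $\varphi=\pm 1$ on $M$ being absorbed into the $\pm$ of $\tau'=\pm\psi\circ\tau\circ\varphi^{-1}$) matches the paper's isomorphism condition.
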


There is a natural abelian group structure on the fibres given by tensor product. Repeating the counting argument given in \autoref{teorema4},  the fibres are isomorphic to the group $\mathbb{Z}_2^{(4p^2+2p)(g-1)+1}$.

Given a line bundle $L \in Jac(\bar S)[2]$, in general, there is no preferred choice of unit vector in $L_{x'}$. Suppose that we vary the coefficients $(a_1, a_2,\ldots , a_{p})$ and hence also the spectral curves $S, \bar{S}$ in a continuous family. Suppose that we also continuously vary the orthogonal line bundle $L$. For some fixed member of the family, choose for each $x \in D$ a unit vector $e_x \in L_{x'}$. Moving around a non-contractible loop in the family, we may find that the choice of unit vectors $\{e_x\}_{x\in D}$ does not extend over the loop. In other words, we may find a non-trivial monodromy action on the set of choices of unit vectors.

The situation, however, is much simpler in the special case that $L = \mathcal{O}$ is the trivial line bundle. Then for each $x \in D$ a choice of unit vector in $L_{x'}$ is simply a choice of either $+1$ or $-1$. Thus for each $x \in D$, we have $e_x \in \{+1,-1\}$. In this case it is easy to understand the monodromy action on $\{e_x\}_{x\in D}$. Namely, if we vary $(a_1, a_2,\ldots , a_{p})$ in some continuous loop within the space of smooth spectral curves, then the zeros of $a_{p}$ are moved around by some permutation $\theta:D\rightarrow D$. The monodromy action on the choice of unit vectors $\{e_x\}$ is just the natural action induced by $\theta$. In particular, we see that monodromy preserves the total number of $+1$'s and total number of $-1$'s. On the other hand, it is easy to see that the full permutation group of $D$ can be realised by monodromy, and thus the number of $+1$'s is the only monodromy invariant.

Note that $|D| = deg(K^{2p}) = 4p(g - 1)$, so the number of $+1's$ is an integer between $0$ and $4p(g - 1)$. Let $b_+$ denote the number of $+1$'s and $b_-$ the number of $-1$'s, so $b_+ + b_- = 4p(g - 1)$. Note that replacing $e_x$ by $-e_x$ for every $x \in D$ produces an isomorphic Higgs bundle. This operation exchanges the roles of $b_+$ and $b_-$, so without loss of generality we may assume that $b_+ \geq b_-$. It follows that there exists an integer $b$ such that
\begin{eqnarray}
b_+ =2p(g-1)+b,~{\rm and}~ b_- =2p(g-1)-b, \end{eqnarray}
where $0 \leq b \leq 2p(g - 1)$.
Denote by   $D_+$ and $D_-$ the set of $x \in D$ with $e_x = 1$ and $e_x = -1$ respectively, so that $K^{2p} = \mathcal{O}(D_+) \otimes \CO(D_-).$ Then, there exists unique up to scale sections  $s_+ \in \CO(D_+)$ and $s_-\in \CO(D_-)$ which vanish on $D_+$ and $D_-$ respectively. Scaling these sections appropriately, we can assume that
\begin{equation}\label{equ:splusminus}
s_-s_+=\frac{a_{p}}{2},
\end{equation}
where the factor of 2 is introduced for later convenience. Setting $B = K^{-p}(D_+)$, we have that $B^* = K^{-p}(D_-)$ and  $deg(B) = b_+ - 2p(g - 1) = b$.

\subsection{The extra components for $SO(p+1,p)$ from spectral data}\label{extra-comp}

It is known that the moduli space of $SO(p+1,p)$-Higgs bundles has {\it extra} components not detected by characteristic classes \cite{aparicio,brian2}. The extra components are {\it Hitchin-like}, meaning they share many similarities with the Hitchin component. They have been discovered as a byproduct of the Morse theoretic approach to counting connected components by looking for minima of the Hitchin functional. We will show here that these extra components emerge naturally from the spectral data point of view. Therefore, spectral data provides a simple conceptual explanation for the existence of these components.

 We have seen in \autoref{q1} that choosing $L = \CO$ and $e_x = \pm1$ for all $x \in D$ produces components distinguished by an integer invariant $b$. To see that these components are indeed the {\it extra} components of the moduli space of $SO(p + 1, p)$-Higgs bundles, we will carry out the reconstruction of the Higgs bundles corresponding to this spectral data and see that they are indeed those in the components described in \cite{brian2}. 
 
 In order to state the theorem, we introduce the following holomorphic differentials $\{ h_u \}$. Let 
 \begin{eqnarray}p(\xi,x) = \xi^{p} + a_1(x) \xi^{p-1} + \cdots + a_p(x)\label{polp}\end{eqnarray}
  be the characteristic polynomial of an $SO(p+1,p)$-Higgs bundle. For a given $x$, let $\xi_1 , \dots , \xi_p$ be the zeros of $p(\xi , x)$. Define $\{ h_u \}$ to be the complete homogeneous symmetric polynomials of $\xi_1 , \dots , \xi_p$. Namely for any $u \ge 1$, set
\begin{equation}\label{equ:homogsymm}
h_u = \sum_{1 \le i_1 \le i_2 \le \cdots \le i_u \le p } \xi_{i_1} \xi_{i_2} \cdots \xi_{i_p},
\end{equation}
and define $h_0 = 1$. Then $h_u$ is a well-defined section of $H^0( \Sigma , K^u )$. We recall the following version of Newton's identities, valid for all $j \ge 1$:
\begin{equation}\label{equ:newton}
\sum_{u=0}^j h_u a_{j-u} = 0.
\end{equation}
 
 \begin{Theorem}\label{teorema5}
Let $(W , V , Q_W , Q_V , \beta , \gamma)$ be the $SO(p+1,p)$-Higgs bundle associated to $L = \CO$, and $e_x = \pm 1$ according to whether $x \in D_+$ or $x \in D_-$. Let $B = K^{-p}(D_+)$ and let $s_+ , s_-$ be as in Equation \eqref{equ:splusminus}. Then, up to isomorphism $(W , V , Q_W , Q_V , \beta , \gamma)$ are given by:
\begin{equation*}
\begin{aligned}
W &= K^{p-1} \oplus K^{p-3} \oplus \cdots \oplus K^{-(p-1)}, \\
V &= \left( K^{p-2} \oplus K^{p-4} \oplus \cdots \oplus K^{-(p-2)} \right) \oplus B \oplus B^* = W_0 \oplus B \oplus B^*, \\
Q_W( w_i , w'_j ) &= \begin{cases} 0, & \text{ if } i+j < p+1, \\ h_{i+j-(p+1)} w_i w'_j, & \text{ if } i + j \ge p+1, \end{cases} \\
Q_{V} &= Q_{W_0} \oplus \left( \begin{matrix} 0 & a_{p}/2 \\ a_{p}/2 & 0 \end{matrix} \right), 
\end{aligned}
\end{equation*}
\begin{equation*}
\begin{aligned}
Q_{W_0}( v_i , v'_j ) &= \begin{cases} 0, & \text{ if } i+j < p, \\ h_{i+j-p}v_i v'_j, & \text{ if } i + j \ge p, \end{cases} \\
\beta( w_1 , \dots , w_p ) &= (w_1 - w_pa_{p-1} , w_2 - w_p a_{p-2} , \dots , w_{p-1} - w_p a_1 , w_p s_+ , -w_p s_- ), \\
\gamma( v_1 , \dots , v_{p-1} , g , h ) &= (s_+ h - s_- g , v_1 , \dots , v_{p-1} ).
\end{aligned}
\end{equation*}
Here, $w_i, w'_j , v_i , v'_j$ are respectively sections of $K^{(p+1)-2i}$, $K^{(p+1)-2j}$, $K^{p-2i}$, $K^{p-2j}$, $g$ is a section of $B$, $h$ is a section of $B^*$ and we identify $w_j,v_j$ with the corresponding sections $(0 , \dots , 0 , w_j , 0 , \dots 0)$, $(0 , \dots , 0 , v_j , 0, \dots , 0)$ of $W$ and $W_0$, and similarly for $w'_j , v'_j$.
\end{Theorem}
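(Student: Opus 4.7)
The plan is to reconstruct the $SO(p+1,p)$-Higgs bundle directly from the spectral data $(L=\mathcal{O},\{e_x=\pm1\})$ by running the Cayley and Langlands constructions of \autoref{teorema1}, and then match the output against the formulas stated. For the Cayley part, by Eq.~\eqref{Wfromdata} and the projection formula, $W=\bar{\pi}_*(\mathcal{O}\otimes\bar{\pi}^*K^{p-1})=K^{p-1}\otimes\bar{\pi}_*\mathcal{O}_{\bar{S}}$. Since $\bar{S}\subset\mathrm{Tot}(K^2)$ is cut out by $\xi^p+a_1\xi^{p-1}+\cdots+a_p$, the sheaf $\bar{\pi}_*\mathcal{O}_{\bar{S}}$ is freely generated as an $\mathcal{O}_\Sigma$-module by $1,\xi,\ldots,\xi^{p-1}$, giving $\bar{\pi}_*\mathcal{O}_{\bar{S}}=\bigoplus_{i=0}^{p-1}K^{-2i}$ and the claimed decomposition of $W$, with $w_j$ identified with the coefficient of $\xi^{j-1}$. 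The orthogonal structure $Q_W$ arises from relative duality: the identity $L_0^2=\bar{\pi}^*K^{2(p-1)}=K_{\bar{S}/\Sigma}$ produces a pairing $W\otimes W\to\bar{\pi}_*K_{\bar{S}/\Sigma}\xrightarrow{\Tr}\mathcal{O}$, which in this basis reads $Q_W(w_i,w'_j)=w_iw'_j\cdot\Tr(\xi^{i+j-2})$. Reducing $\xi^{i+j-2}$ modulo the defining polynomial and applying Newton's identity \eqref{equ:newton} identifies this trace with $h_{i+j-(p+1)}$ for $i+j\ge p+1$; for $i+j<p+1$ the trace vanishes because the corresponding component of $\bar{\pi}_*K_{\bar{S}/\Sigma}$ has negative degree, producing the stated formula for $Q_W$. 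The Higgs field $\beta_F$ is the pushforward of multiplication by $\xi$, so the recurrence $\xi^p=-\sum_k a_k\xi^{p-k}$ gives its matrix-shift form.

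Next I reconstruct $V$ via the Langlands data. For $q=1$ the quadratic bundle is $(V_0,Q_0)=(K^{-p},a_p)$, and by \autoref{propExt} $V$ is the extension of $V_0^*=K^p$ by $V_1^*=W\otimes K^{-1}=W_0\oplus K^{-p}$ with class $\delta(i)$, where $i:N\to J=\ker(\beta_F)$ encodes $\{e_x\}$. The partition $D=D_+\sqcup D_-$ induced by the signs $e_x=\pm1$ produces line bundles $B=K^{-p}(D_+)$ and $B^*=K^p(-D_+)$ together with sections $s_\pm\in H^0(\mathcal{O}(D_\pm))$ normalized by $s_+s_-=a_p/2$ as in \eqref{equ:splusminus}. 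The identification $\mathcal{O}(D_++D_-)=\mathcal{O}(D)=K^{2p}$ then makes $f\mapsto(fs_+,-fs_-)$ an injection $K^{-p}\hookrightarrow B\oplus B^*$ whose cokernel is naturally $K^p$ via $(b,c)\mapsto s_-b+s_+c$; this realizes an explicit extension $0\to K^{-p}\to B\oplus B^*\to K^p\to 0$. Concatenating with the trivial extension on $W_0$ yields a holomorphic identification $V\cong W_0\oplus B\oplus B^*$ realizing the extension class $\delta(i)$ prescribed by $\{e_x\}$. The orthogonal form $Q_V$ splits accordingly: on $W_0$ it is obtained by applying the trace-pairing argument to $\bar{\pi}_*\bar{\pi}^*K^{p-2}$ restricted to the $W_0$ block, giving the formula for $Q_{W_0}$, and on $B\oplus B^*$ the form is forced by the extension condition from $Q_0=a_p$ on $K^{-p}$, producing the off-diagonal entry $a_p/2=s_+s_-$. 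Finally, $\beta$ and $\gamma$ follow by tracking the defining diagrams: the shift from $\beta_F$ together with the relation $\xi^p=-\sum_k a_k\xi^{p-k}$ gives the $W_0$-components $w_j-w_pa_{p-j}$, the embedding $K^{-p}\hookrightarrow B\oplus B^*$ gives the $(w_ps_+,-w_ps_-)$ components, and $\gamma=\beta^t$ produces the stated formula for $\gamma$.

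The main obstacle is the identification of the rank-$2$ extension with $B\oplus B^*$ in a way that faithfully records $\{e_x\}$. This requires the pointwise isometry condition \eqref{equ:isometry} of \autoref{condI}: at each $x\in D$, the map $i_x:N_x\to J_x$ modulo the overall sign identified in \autoref{teorema1} is equivalent to the unit vector $e_x\in L_{r'}=\mathbb{C}$, and I must verify that $e_x=+1$ versus $e_x=-1$ is exactly what determines whether $x\in D_+$ or $x\in D_-$, thereby fixing the factorization $a_p=2s_+s_-$ up to the global $\pm1$ ambiguity. Once this correspondence is established, the remaining checks are local bookkeeping: that $Q_V$ extends holomorphically across $D$, that $\gamma=\beta^t$ under the extended forms, and that the determinant condition $\det(V)\cong\det(W)$ holds, which follows from $\det(W_0)=\mathcal{O}$ and $\det(B\oplus B^*)=\mathcal{O}$.
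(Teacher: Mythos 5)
Your overall route coincides with the paper's: compute $W=\bar\pi_*\mathcal{O}_{\bar S}\otimes K^{p-1}$ and $Q_W$ by relative duality and symmetric-function identities, realize $V$ by allowing first-order poles along $D_\pm$ in the two isotropic directions of the rank-two block (your explicit extension $0\to K^{-p}\to B\oplus B^*\to K^p\to 0$ via $f\mapsto(fs_+,-fs_-)$ is exactly the ``anti-diagonal'' description of \autoref{prop:merogamma} rewritten globally), and then read off $\beta$ and $\gamma$. The verification you defer --- that $e_x=\pm1$ selects $D_+$ versus $D_-$ --- is handled in the paper essentially by fiat up to the global sign ambiguity, so that is not where the problem lies.

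The genuine gap is the assertion that ``$Q_V$ splits accordingly'' with respect to the naive decomposition $V_1^*=W\otimes K^{-1}=W_0\oplus K^{-p}$ into graded pieces. That decomposition is \emph{not} orthogonal for the form $Q_W(\beta_F\,\cdot\,,\,\cdot\,)$: taking $v_i$ in the $i$-th slot ($i<p$) and $v_p'$ in the $p$-th slot, the shift form of $\beta_F$ gives the cross term $Q_W(\beta_F v_i , v_p') = h_i\, v_i v_p'$, which is nonzero in general (e.g.\ $h_1=-a_1$). The paper must first conjugate by the automorphism $\psi$ of \eqref{equ:psi}, $\psi(v)=v+v_p(a_{p-1},\dots,a_1,0)$, after which the cross terms become $v_iv_p'\sum_{j=0}^{i}a_jh_{i-j}$ and vanish precisely by Newton's identity \eqref{equ:newton} --- a second, and more essential, use of \eqref{equ:newton} that your plan never invokes. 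The same $\psi$ accounts for two ingredients you attribute to other sources: the entry $-a_p$ on the $K^{-p}$ factor coming from $W\otimes K^{-1}$ (without which the passage to the off-diagonal form $a_p/2$ on $B\oplus B^*$ is not ``forced by $Q_0=a_p$'' alone), and the correction terms $-w_pa_{p-j}$ in $\beta$, which arise from $\psi^{-1}$ rather than directly from the relation $\xi^p=-\sum_k a_k\xi^{p-k}$. Without this change of basis the block-diagonal form of $Q_V$, and hence the stated formulas for $Q_V$, $\beta$ and $\gamma$, cannot be obtained.
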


\begin{proof}
Since $W=\bar \pi_*(\CO)\otimes K^{p-1}$, we have
 \begin{eqnarray} W=K^{p-1}\oplus K^{p-3}\oplus \cdots\oplus K^{-(p-3)}\oplus K^{-(p-1)},
 \end{eqnarray}
 and therefore 
  \begin{eqnarray} 
  V_1^* = W\otimes K^{-1}=K^{p-2}\oplus K^{p-4}\oplus \cdots\oplus K^{-(p-2)}\oplus K^{-p}.
 \end{eqnarray}
In order to identify $Q_W$ and $\beta_F$ we need to make this isomorphism explicit. Consider 
 \begin{eqnarray}
 w=(w_1,w_2,\ldots w_p)\in \mathcal{C}^\infty( \Sigma , K^{p-1}\oplus K^{p-3}\oplus \cdots\oplus K^{-(p-3)}\oplus K^{-(p-1)}).
 \end{eqnarray}
 Then we identify $w$ with the section of $W = \overline{\pi}_*( K^{p-1} )$ given by:
\begin{equation}\label{eq:pushfwd}
w_1 + \xi w_2 + \xi^2 w_3 + \cdots + \xi^{p-1} w_p.
\end{equation}
Recall that $\beta_F : W \to W \otimes K^2$ is obtained by pushing down multiplication by $\xi$. Thus, if $w$ is given as in \eqref{eq:pushfwd}, then
\begin{equation*}
\begin{aligned}
\beta_F w &= \xi w_1 + \xi^2 w_2 + \cdots + \xi^{p-1} w_{p-1} + \xi^{p} w_p \\ 
& = -w_p a_p + \xi (w_1 - w_p a_{p-1} ) + \xi^2 (w_2 - w_p a_{p-2} ) + \cdots + \xi^{p-1} (w_{p-1} - w_p a_1 ).
\end{aligned}
\end{equation*}
In other words, we have:
\[
\beta_F(w_1 , w_2 , \dots , w_p) = (0 , w_1 , w_2 , \dots , w_{p-1}) -w_p(a_p , a_{p-1} , \dots , a_1 ).
\]

In order to compute $Q_W$, recall that if $f(\xi) = (\xi - \xi_1)(\xi - \xi_2) \cdots (\xi - \xi_p)$ a monic degree $p$ polynomial with distinct roots, we have
\[
\sum_i \frac{ \xi_i^r }{ p'(\xi_i) } = \begin{cases} 0, & \text{ if } r < p-1, \\ h_{r-(p-1)} & \text{ if } r \ge p-1, \end{cases}
\]
where $\{ h_u \}$ are the complete homogeneous symmetric polynomials of $\xi_1 , \dots , \xi_p$, as in \eqref{equ:homogsymm}.
Then, since $Q_W$ is obtained by relative duality:
\[
Q_W( w , w' )(x) = \sum_{ \{ x' \; | \; \overline{\pi}(x') = x \} }  \frac{ w(x') w'(x') }{ d \overline{\pi}(x') }.
\] 
and $\bar\pi:\bar S\rightarrow S$ is  the zero set of $p(\xi,x)$ as in \eqref{polp},
 with  $d\overline{\pi} = \partial_\xi p(x,\xi)$, so that
\[
Q_W( w , w')(x) = \sum_{ \{ \xi_i \; | \; p(\xi_i , x) = 0 \} } \frac{ w(\xi_i) w'(\xi_i) }{ \partial_\xi p(x , \xi_i ) }.
\]

Let $w_i$ be a section of $K^{(p+1)-2i}$ and $w'_j$ a section of $K^{(p+1)-2j}$. As in the statement of the theorem, we identify $w_i,w'_j$ with the corresponding sections of $W$. Then
\begin{equation*}
\begin{aligned}
Q_W( w_i , w'_j ) = \sum_{ \{ \xi_i \; | \; p(\xi_i) = 0 \} } \frac{ w_i w'_j \xi_i^{i+j-2}}{ p'(\xi_i) }  
= \begin{cases} 0, & \text{ if } i+j < p+1, \\ h_{i+j-(p+1)} w_i w'_j, & \text{ if } i + j \ge p+1. \end{cases}
\end{aligned}
\end{equation*}
We would like to calculate the quadratic form $Q_W( \beta_F v ,v' )$ on $V_1^* \cong W \otimes K^{-1}$, and for this it is better to make a change of basis and so we consider the following bundle automorphism
\begin{equation}\label{equ:psi}
\psi : W \otimes K^{-1} \to W \otimes K^{-1}, \quad \psi(v_1 , \dots , v_p) = (v_1 , \dots , v_p) + v_p(a_{p-1} , \dots , a_1 , 0).
\end{equation}
Considering $\psi$ as an isomorphism $\psi : V_1^* \to W \otimes K^{-1}$, we denote by  $Q_1$   the quadratic form on $V_1^*$ obtained by pullback of $Q_w( \beta_F v , v')$ on $W \otimes K^{-1}$, that is
\[
Q_1( v , v') = Q_W( \beta_F \psi(v) , \psi(v') ).
\]
Let $v_i$ be a section of $K^{p-2i}$ and $v'_j$ a section of $K^{p-2j}$. If $i,j < p$, one finds   that
\[
Q_{1}( v_i , v'_j ) = \begin{cases} 0, & \text{ if } i+j < p, \\ h_{i+j-p}v_i v'_j, & \text{ if } i + j \ge p. \end{cases}
\]
If $i < p$ and $j=p$, we have from Equation \eqref{equ:newton} that 
\begin{equation*}
\begin{aligned}
Q_1( v_i , v'_p ) &= Q_W( \underset{(i+1)\text{-th slot}}{(0, \dots , 0 , v_i , 0 , \dots , 0)} , v'_p(a_p , \dots , a_1 , 1) ) 
= v_i v'_p \sum_{j=0}^{i} a_j h_{i-j} 
= 0,
\end{aligned}
\end{equation*}
 Lastly, if $i = j = p$, then one finds that $\beta_F( \psi(v_p)) = -v_p a_p( 1 , 0 , \dots, 0)$. Therefore
\begin{equation*}
\begin{aligned}
Q_1( v_p , v'_p ) = Q_W( -v_pa_p(1,0, \dots , 0) , v'_p(a_p , \dots , a_1 , 1) ) 
= -v_p v'_p a_p.
\end{aligned}
\end{equation*}

We write
\begin{equation*}
\begin{aligned}
V_1^* &= K^{p-2} \oplus K^{p-4} \oplus \cdots \oplus K^{-(p-2)} \oplus K^{-p} \\
&= \left( K^{p-2} \oplus K^{p-4} \oplus \cdots \oplus K^{-(p-2)} \right) \oplus K^{-p} \\
&= W_0 \oplus K^{-p},
\end{aligned}
\end{equation*}
where
\[
W_0 = K^{p-2} \oplus K^{p-4} \oplus \cdots \oplus K^{-(p-2)}.
\]
Having  shown that $W_0$ and $K^{-p}$ are orthogonal with respect to $Q_1$, and that
$Q_1( v_p , v'_p ) = - a_p v_p v'_p,$
denote the restriction of $Q_1( \beta_F \; , \; )$ to $W_0$ by $Q_{W_0}$. By the above calculations, this agrees with the definition of $Q_{W_0}$ given in the statement of the theorem. Next, recall that to construct $V$ from special data as in \autoref{teorema1}, we first take the bundle 
\[
V' = (W\otimes K^{-1}) \oplus V_0 = (W \otimes K^{-1} ) \oplus K^{-p} = W_0 \oplus \left( K^{-p} \oplus K^{-p} \right),
\]
which has a degenerate quadratic form $Q_{V'}$. This form    is the direct sum of $Q_W( \beta_F \; , \; )$ with $a_{p}$ on the second $K^{-p}$ factor, and thus
\[
Q_{V'} = Q_{W_0} \oplus \left( \begin{matrix} -a_{p} & 0 \\ 0 & a_{p} \end{matrix} \right).
\]
Next we make a change of basis $K^{-p} \oplus K^{-p} \to K^{-p} \oplus K^{-p}$, so that the $K^{-p}$ factor which comes from $W\otimes K^{-1}$ is sent to the anti-diagonal $\{ ( w , -w ) \}$ and the factor of $K^{-p}$ which comes from $V_0$ is sent to the diagonal $\{ ( w , w) \}$. In such a basis $Q_{V'}$ becomes
\[
Q_{V'} = Q_{W_0} \oplus \left( \begin{matrix} 0 & a_{p}/2 \\ a_{p}/2 & 0 \end{matrix} \right).
\]
In this basis, the isotropic subspaces of $K^{-p} \oplus K^{-p}$ are the two $K^{-p}$ summands. Recall from \autoref{prop:merogamma} that to get $V$, we define $\mathcal{O}(V)$ to be the sheaf of meromorphic sections of $V'$ which for each $x \in D$ are allowed to admit first order poles on one of the two isotropics $\Gamma_x = K^{-p}_x \oplus 0$, or $\Gamma_x = 0 \oplus K^{-p}_x$. The choice of one of these two isotropics corresponds to whether $e_x = 1$ or $-1$ (which is which is unimportant, since changing the sign of every $e_x$ gives an isomorphic Higgs bundle). Thus we can assume that $D_+$ is the subset corresponding to the first isotropic and $D_-$ to the second. Therefore
\[
V = W_0 \oplus K^{-p}(D_+) \oplus K^{-p}(D_-) = W_0 \oplus B \oplus B^*.
\]
The induced quadratic form on $V$ is the direct sum of $Q_{W_0}$ with the natural dual pairing between $B$ and $B^*$.

Lastly, we need to work out the maps $\beta : W \to V \otimes K$ and $\gamma : V \to W \otimes K$. We have
\[
V = W_0 \oplus B \oplus B^* = \left( K^{(p-2)} \oplus K^{(p-4)} \oplus \cdots \oplus K^{-(p-2)} \right) \oplus B \oplus B^*
\]
and
$
W = K^{p-1} \oplus K^{p-3} \oplus \cdots \oplus K^{-(p-1)}.
$
Recall that $\beta$ is defined away from the zeros of $a_p$ by the natural inclusion $W \to (W\otimes K^{-1}) \otimes K \cong V_1^* \otimes K \subset V \otimes K$ and that this extends holomorphically to a map $\beta : W \to V \otimes K$. Bearing in mind that we are using the isomorphism $\psi$ of \eqref{equ:psi} to identify $V_1^*$ with $W \otimes K^{-1}$, we find:
\[
\beta( w_1 , w_2 , \dots , w_p ) = (w_1 -w_pa_{p-1}, w_2-w_pa_{p-2} , \dots , w_{p-1}-w_p a_1 , w_p s_+ , -w_p s_- ).
\]
Recall also that $\gamma$ is defined away from the zeros of $a_p$ by 
\[
V = V_1^* \oplus V_0 \buildrel (\psi , id) \over \longrightarrow \left( W \otimes K^{-1} \right) \oplus V_0 \to W \otimes K^{-1} \buildrel \beta_F \over \longrightarrow W \otimes K,
\]
where the map $\left( W \otimes K^{-1} \right) \oplus V_0 \to W \otimes K^{-1}$ is projection to the first factor. This map also extends holomorphically over $D$, and thus we   find
$\gamma( v_1 , v_2 , \dots , v_{p-1} , g , h ) = (s_+ h - s_- g , v_1 , \dots , v_{p-1} ).$
 \end{proof}

\begin{Remark}\label{extra}
From the above proof, the natural candidates for the extra  components conjectured to exist by  Guichard and Wienhard \cite[Conjecture 5.6]{anna2}  are those containing Higgs bundles whose spectral data $(L , M , \tau )$ in \autoref{teorema1} has the form $( \mathcal{O} , \mathcal{O}^q , \tau )$. Alternatively, this is equivalent to taking $SO(p+q,p)$-Higgs bundles of the form $(W , V \oplus \mathcal{O}^{q-1})$, where the pair $(W,V)$ is one of the $SO(p+1,p)$-Higgs bundles constructed in this section.  
\end{Remark}


\section{Concluding remarks}\label{section-structures}\label{p2}\label{section-hermitian}

 \subsection{Groups of Hermitian type:
 $SO(2+q,2)$-Higgs bundles}   
In the case $p=2$, we have the group $SO(2+q,2)$ which is of Hermitian type and therefore $SO(2+q,2)$-Higgs bundles $(V,W,\beta)$ carry a Toledo invariant satisfying a Milnor-Wood type inequality. Whilst understanding the invariant and inequalities through representation theoretic methods might not be too direct, in what follows we show that  they have a  very concrete interpretation through the methods developed in this paper. 
 By \autoref{teorema1},  regular $SO(2+q,2)$-Higgs bundles are in correspondence with  $(L, M, \tau )$:
\begin{itemize}
\item[(I)] $L \in {\rm Jac}(\bar S)[2]$ is an orthogonal line bundle on the $2$-fold cover  satisfying \autoref{ass1} given by  $\bar S=\{\xi^2+a_1\xi^{p-1}+a_{2}=0\}\subset  {\rm Tot}(K^{2})$,  where $a_i\in H^0(\Sigma, K^{2i})$.
\item[(II)] $M$ is an equivariant rank $q$-orthogonal bundle  on the 2-fold cover $C=\{\zeta^2=a_{2}\}\subset {\rm Tot}(K^{2})$ of type $(q-1,1)$ over each ramification point, with a choice of orientation, and  such that all invariant isotropic subbundles $M'\subset M$ have degree $\le 0$. 
\item[(III)] For each zero $x$ of $a_2$, an isometry $\tau_x : M_r^- \to L_{r'}$, where $r,r'$ are the   zeros of $\xi , \zeta$ lying over $x$.
\end{itemize}
From \autoref{section2}   the data in (I) corresponds to a maximal $Sp(4,\mathbb{R})$-Higgs bundle\footnote{Recall that the Toledo invariant for such Higgs bundles is defined as the degree ${\rm deg}(W \otimes K^{1/2})=2g-2$.},  given by
\begin{equation*}
F = (W \otimes K^{1/2}) \oplus (W \otimes K^{-1/2}), \quad \quad \Phi_F = \left( \begin{matrix} 0 & \beta_F \\ Id & 0 \end{matrix} \right), ~{\rm ~where~}~\beta_F=\gamma \circ \beta.
\end{equation*}

From Gothen's work \cite{gothen}, the moduli space of maximal $Sp(4,\mathbb{R})$-Higgs bundles, or equivalently, of maximal $Sp(4,\mathbb{R})$ surface group representations has $3 \cdot 2^{2g} + 2g - 4$ connected components. Note that $\omega_{1}(W)=0$ if and only if $W=\mathcal{L}\oplus \mathcal{L}^{*}$ for some  $\mathcal{L}\in {\rm Jac}(\Sigma)$, which we may assume satisfies $c:=\deg(\mathcal{L}) \ge 0$. Then, there are  three types of components:
\begin{itemize}
\item[(a)]  $2^{2g}$ Hitchin components $M_{\mathcal{L}}$ (where the degree of $\mathcal{L}$ is maximal, in which case $\mathcal{L}^2=K$), 
\item[(b)] $2g-2$ components $M_{0,c}$ (where $\omega_1(W) = 0$ and $ c= {\rm deg}(\mathcal{L})$ is non maximal), and 
\item[(c)] $2(2^{2g}-1)$ components $M_{\omega_{1},\omega_{2}}$ given by the possible values of $(\omega_1(W) , \omega_2(W))$ with $\omega_{1}\neq 0$.
\end{itemize}
Note that Higgs bundles for $SO_0(2+q,2)$, the identity component of $SO(2+q,2)$, correspond to the cases in which $\omega_1(W) = 0$, i.e. components of types (a) and (b). In such cases we have $W=\mathcal{L}\oplus \mathcal{L}^*$, and when ${\rm Tr}(\beta_F)=0$ (these are referred to as {\it conformal Higgs bundles} in \cite{brian}), the induced $K^2$-twisted Higgs bundle $(W , \beta_F)$ is then a $K^2$-twisted $SL(2,\R)$-Higgs bundle (as opposed to $GL(2,\mathbb{R})$).

Via the Cayley type correspondence, the above classification of maximal $Sp(4,\mathbb{R})$-Higgs bundles into classes (a)-(c) gives a similar categorization of $SO(2+q,2)$-Higgs bundles into classes (a)-(c). In order to complete this to a description of connected components, one would also need to understand additional invariants involved in the construction, arising from the quadratic bundle and the extension data. 

In the case of maximal Higgs bundles, i.e. those where $(W , \beta_F)$ is of type (a), it was shown in \cite{steve1} that for $q>2$ the moduli space of representations into $SO_0(2+q,2)$ with maximal Toledo invariant, i.e. $\omega_1(W) = 0$ and $\deg(\mathcal{L})=2g-2$, has $2^{2g}$ {\it Hitchin type} connected components. Using the description of spectral data in \autoref{teorema1}, one can see these components in terms of maximal $Sp(4,\R)$-Higgs bundles, where they correspond to the $2^{2g}$ Hitchin components. The $2g-2$ components in (b)  are those referred to as {\it Gothen components} in \cite{brian2}, and they are the orthogonal version of the  $Sp(4, \R)$-representations discovered by Gothen in \cite{gothen}. 
From Higgs bundles which do not reduce to the identity component of $SO(2+q,2)$, there are $2(2^{2g}-1)$   values of $( \omega_1(W) , \omega_2(W))$, and  from \autoref{teorema1},   the other invariants introduced in \autoref{teorema2} should label further components of the moduli space of  $SO(2+q,2)$-Higgs bundles.

\begin{Remark}Recall that the natural  $Sp(4,\R)$-Higgs bundle $(F,\Phi_F)$ associated to an $SO(2+q,2)$-Higgs bundle $(E,\Phi)$ in \autoref{section2} has reduced spectral curve given by  
$ \bar S=\{\xi^2+a_1 \xi +a_2=0\}$. Noting that $a_1=-{\rm Tr}(\beta_F)$, we see that when the Higgs bundle is conformal, $a_1 = 0$ and $\bar S$ is given by $\xi^2 + a_2 = 0$. This is the same as the equation defining the auxiliary spectral curve $C$, i.e. $\bar S = C$ for conformal $SO(2+q,2)$-Higgs bundles.
\end{Remark}


\subsection{$Sp(2p+2q,2p)$-Higgs bundles}\label{sec:symplectic}

As mentioned in the introduction, most of our results for orthogonal Higgs bundles have corresponding counterparts for symplectic Higgs bundles. In this section we shall discuss these results, but we shall do so briefly since  their proofs are very similar to the orthogonal case.

From \autoref{def2}, one finds that an $Sp(2p+2q,2p)$-Higgs bundle is a triple $(V,W,\beta)$ given by:
\begin{enumerate}
\item{A rank $2p+2q$ symplectic bundle $(V,Q_V)$}
\item{A rank $2p$ symplectic bundle $(W,Q_W)$}
\item{A holomorphic bundle map $\beta : W \to V \otimes K$.}
\end{enumerate}
Let $\gamma : V \to W \otimes K$ be the symplectic transpose of $\beta$. One can recover the associated $Sp(4p+2q,\mathbb{C})$-Higgs bundle $(E,\Phi)$  by setting $E=V\oplus W$ with symplectic form
\[
( (x,y) , (x',y') ) = Q_V(x,x') - Q_W(y,y'),
\]
and Higgs field $\Phi:E\rightarrow E\otimes K$   given by
\begin{eqnarray}\Phi=\left(\begin{array}{cc}
              0&\beta\\
\gamma&0
             \end{array} \right).
\end{eqnarray} 
The characteristic polynomial of $\Phi$ is of the form
\begin{eqnarray}
{\rm det}(\eta-\Phi)= \eta^{2q}(\eta^{2p}+a_{1}\eta^{2p-2}+\ldots +a_{p-1}\eta^{2}+a_{p})^2.
\end{eqnarray}
We define the spectral curves $S , \bar S$ exactly as in the orthogonal case. We suppose that \autoref{ass1} holds, in particular that $S$ and $\bar S$ are smooth. Following \autoref{Higgs}, define $V_0 , V_1$ as in the orthogonal case, as well as maps $\gamma_+ , \beta_+$ and commutative diagrams as in \autoref{E+}. 

Letting $\beta_F = \gamma \beta : W \to W \otimes K^2$ as before, one finds that $\beta_F$ is symmetric with respect to $Q_W$ and hence in this case $(W , Q_W , \beta_F )$ is a $K^2$-twisted $GL(p , \mathbb{H})$-Higgs bundle. This is the Cayley data in the symplectic case. Under \autoref{ass1}, it can be shown that $(W , Q_W , \beta_F)$ corresponds to a principal $Sp(1,\mathbb{C})$-bundle on $\bar S$, in other words, a rank $2$ symplectic vector bundle $L \to \bar S$, which is the symplectic analogue of an orthogonal line bundle.

Next, consider $(V_0 , Q_0)$, where $Q_0 = Q_V|_{V_0}$. This is a skew-symmetric quadratic bundle, i.e. the skew-symmetric analogue of a quadratic bundle as defined previously. Over each zero $x$ of $a_p$, the null space of $Q_0$ is a $2$-dimensional symplectic subspace $N_x \subseteq (V_0)_x$. Considering again the auxiliary double cover $\pi_C : C \to \Sigma$, one finds that $V_0$ corresponds to an equivariant symplectic bundle $(M , Q_M , \tilde{\sigma}_C)$ on $C$ such that the $-1$-eigenspace $M_r^-$ of $\tilde{\sigma}_C$ over a ramification point $r \in C$ is a $2$-dimensional symplectic space. 

The extension data $\tau$ needed to reconstruct $V$ as an extension of $V_1$ by $V_0$ is easily seen to consist of symplectomorphisms $$\tau_x : M_r^- \to L_{r'}$$ of $2$-dimensional symplectic spaces. In particular, for each zero $x$ of $a_p$, the space of such isomorphisms is a torsor over $Sp(1,\mathbb{C}) \cong SL(2,\mathbb{C})$. This is the Langlands data of \autoref{teorema1} in the symplectic case.

\begin{Remark}\label{no-extra}An interesting point of contrast between the orthogonal and symplectic cases is that $O(1,\mathbb{C}) \cong \{ \pm 1\}$ is disconnected while $Sp(1,\mathbb{C}) \cong SL(2,\mathbb{C})$ is connected. In particular, this explains the absence of any ``extra" components in the moduli space of $Sp(2p+2q,2p)$-Higgs bundles.
\end{Remark}

 \begin{Remark}
The case of  $q=0$ is not a split real form, and   for these $Sp(2p,2p)$-Higgs bundles  the spectral data was described in \cite{nonabelian,thesis}. Here, the intersection of the moduli space with the regular fibres is given by   a $\mathbb{Z}_2$-quotient of a moduli space of semi-stable rank 2 parabolic bundles on $\bar S$, and it corresponds to the $K^2$-twisted $GL(p , \mathbb{H})$-Higgs bundle mentioned above. 
 \end{Remark}

\subsection{Langlands duality}

The appearance of Higgs bundles (and flat connections) within string theory and the geometric Langlands program has led researchers to study the {\it derived category of coherent sheaves} and the {\it Fukaya category} of these moduli spaces. Therefore, it has become fundamental to understand Lagrangian submanifolds of the moduli space of Higgs bundles supporting holomorphic sheaves ($A$-branes), and their dual objects ($B$-branes). 
\smallbreak

We conclude the paper with some comments on Langlands duality. This section will be conjectural, as it is currently not understood how the duality should work over singular fibres of the Hitchin fibration.
 \smallbreak
 
  Let $^L G_\C$ denote the Langlands dual group of $G_\C$. There is a natural identification of invariant polynomials for $G_\C$ and $^L G_\C$, giving an identification $\mathcal{A}_{G_\C} \simeq \mathcal{A}_{^L G_\C}$ of the Hitchin bases. The two moduli spaces $\mathcal{M}_{G_\C}$ and $\mathcal{M}_{^L G_\C}$ are then torus fibrations over a common base and their non-singular fibres are dual abelian varieties \cite{dopa}. Kapustin and Witten give a physical interpretation of this in terms of S-duality, using it as the basis for their approach to the geometric Langlands program \cite{Kap}. In this approach a crucial role is played by the various types of branes and their transformation under mirror symmetry. 
    Whilst it is understood that Langlands duality exchanges brane types,    the exact correspondence is not yet known.
        In the case of $(B,A,A)$-branes of $G$-Higgs bundles, we have  conjectured  the following: 
           \vspace{0.08 in}

     \noindent {\bf Conjecture} \cite{slices}. {\it 
     The support of the dual brane to $\MG$ is the moduli space $\mathcal{M}_{\check{H}}\subset \mathcal{M}_{^L \GC}$ of $\check{H}$-Higgs bundles where $\check{H}$ is the group associated to the Lie algebra $\check{\mathfrak{h}}$ in \cite[Table 1]{nad}.} 
  
   \vspace{0.05 in}
 \begin{Remark}One should note that, in contrast with the $(A,B,A)$ and $(A,A,B)$ branes considered in \cite{slices}, for any $q>1$ the  $(B,A,A)$-branes studied in this paper lie completely over the singular locus of the $SO(2p+q,\C)$-Hitchin fibration. 
 \end{Remark}
 From the above conjecture, together with the geometric description that we have obtained of how the $(B,A,A)$-brane of $\M_{SO(p+q,p)}$ Higgs bundles intersects generic fibres of the $SO(2p+q,\mathbb{C})$ Hitchin fibration, we conjecture the following:   \vspace{0.08 in}

\noindent {\bf Conjecture}. {\it The $(B,A,A)$-brane of $\M_{SO(p+q,p)}$   inside $\M_{SO(2p+q,\mathbb{C})}$ has a dual $(B,B,B)$-brane in the Langlands dual moduli space   whose support consists of similar spaces embedded  through different maps:
   \begin{itemize}
   \item For $q$ odd: the dual support of the $(B,B,B)$-brane is $\M_{Sp(2p,\C)}\subset \M_{Sp(2p+q-1,\C)},$
      \item For $q$ even: the dual support of the $(B,B,B)$-brane is $\M_{SO(2p+1,\C)}\subset \M_{SO(2p+q,\C)},$
    \end{itemize}
We further conjecture that the hyperholomorphic sheaf supported on these spaces, giving the $(B,B,B)$-brane, is also independent of $q$.}   \vspace{0.08 in}

It is interesting to note that the support of branes for $q$ odd and even are dual to each other as hyperk\"ahler moduli spaces of complex Higgs bundles.



  \textheight 10.1in
 \newpage
\pagestyle{plain}

\end{document}